\newcommand{\eps}{\varepsilon}
\newcommand{\R}{\mathbb{R}}
\newcommand{\C}{\mathbb{C}}
\renewcommand{\phi}{\varphi}
\newcommand{\mcl}{\mathcal{L}}
\newcommand{\spn}{\text{\span}}
\renewcommand{\Re}{\mathrm{Re} \,}
\def\XXint#1#2#3{{\setbox0=\hbox{$#1{#2#3}{\int}$ }
		\vcenter{\hbox{$#2#3$ }}\kern-.6\wd0}}
\newtheorem{thm}{Theorem}
\newtheorem*{thm*}{Theorem}
\newtheorem{prop}{Proposition}
\newtheorem{lemma}[prop]{Lemma}
\newtheorem{corollary}[prop]{Corollary}
\newtheorem{hyp}{Hypothesis}
\newtheorem{remark}[prop]{Remark}
\numberwithin{equation}{section}
\numberwithin{prop}{section}
\newcommand{\rg}{\mathrm{Rg}}
\renewcommand{\spn}{\mathrm{span}}
\begin{document}
\begin{center}
{\fontsize{15}{15}\fontseries{b}\selectfont{Pushed-to-pulled front transitions: continuation, speed scalings, and hidden monotonicty}}\\[0.2in]
Montie Avery$^1$, Matt Holzer$^2$, and Arnd Scheel$^1$ \\[0.1in]
\textit{\footnotesize 
$^1$University of Minnesota, School of Mathematics,   206 Church St. S.E., Minneapolis, MN 55455, USA\\
$^2$Department of Mathematical Sciences, George Mason University, Fairfax, VA, USA
}
\end{center}

\begin{abstract}
  We analyze the transition between pulled and pushed fronts both analytically and numerically from a model-independent perspective. Based on minimal conceptual assumptions, we show that pushed fronts bifurcate from a branch of pulled fronts with an effective speed correction that scales quadratically in the bifurcation parameter. Strikingly, we find that in this general context without assumptions on comparison principles, the pulled front loses stability and gives way to a pushed front when monotonicity in the leading edge is lost. Our methods rely on far-field core decompositions that identify explicitly asymptotics in the leading edge of the front. We show how the theoretical construction can be directly implemented to yield effective algorithms that determine spreading speeds and bifurcation points with exponentially small error in the domain size. Example applications considered here include an extended Fisher-KPP equation, a Fisher-Burgers equation, negative taxis in combination with logistic population growth, an autocatalytic reaction, and a Lotka-Volterra model. 
\end{abstract}


\section{Introduction}

Propagation into unstable states is often mediated by invasion fronts. One is interested both in the speed of propagation and in the selected state in the wake of the invasion fronts. Examples of the role of such fronts in experiment, computation, and analysis abound and we refer to \cite{vanSaarloosReview} for a review. At small amplitude, growth of disturbances is determined by the linearization at the unstable state. Assuming that the dynamics in the leading edge are effectively governed by this linearized equation, one can then derive a spreading speed for disturbances from a \emph{linear marginal stability} criterion, that is, finding the supremum of all speeds at which disturbances grow in a comoving frame; see again \cite{vanSaarloosReview}, \cite{bers1983handbook} for background from plasma physics where such questions were first studied, and \cite{HolzerScheelPointwiseGrowth} for a more recent and detailed mathematical approach. Such linear predictions are clearly useful and allow at times for explicit, algebraic characterizations of the invasion speed. Concluding that such linear predictions are accurate for the nonlinear equation is usually very difficult analytically, crucially because linear predictions are often incorrect: states selected in the wake are of finite, non-small amplitude and nonlinearity can cause instabilities of an invasion process that propagates at the linearly predicted spreading speed and lead to faster propagation. Such acceleration is usually attributed to non-convex nonlinearities, weakly subcritical bifurcations, or generally competing nonlinear driving and saturation. Fronts that mediate invasion at the linear speed are commonly referred to as \emph{pulled fronts}; fronts that mediate the invasion at faster speeds due to instability of fronts at the linear spreading speeds are referred to as \emph{pushed fronts}. The aim of this paper is to analyze the transition from pulled to pushed front invasion from a bifurcation perspective, focusing on minimal assumptions on existence of fronts and spectral properties of the linearization at the front. 

Our motivation originates in difficulties with practical attempts at determining spreading speeds. On the one hand, computing linear spreading speeds can often be accomplished with high accuracy and without actually tackling a nonlinear PDE in an unbounded domain but rather an (often quite challenging) algebraic problem. In fact, measuring the front speed in direct simulations is quite difficult because of the slow convergence of speeds $c(t)$ towards the predicted linear speed $c_\mathrm{lin}$, $c(t)- c_\mathrm{lin}\sim t^{-1}$; see \cite{Bramson1,Bramson2,Lau,Comparison1,EbertvanSaarloos,averyscheelselection}. Finding fronts and their speeds in a bounded domain of size $L$ directly using for instance a Newton method where a phase condition that centers the front profile in the middle of the computational domain is compensated by the front speed as a Lagrange multiplier leads to errors of size $L^{-2}$. Since computations need to resolve the exponentially small tails in the leading edge, underflow and round-off errors put effective limits on the size $L$ of the computational domain in this scenario and lead to non-negligible errors for the speed \cite{adss}. 

On the other hand, convergence to pushed fronts is exponential both in time $t$ when performing direct simulations and exponential in space $L$ when using the Newton-approach described above \cite{HadelerRothe,SattingerWeightedNorms,adss}. 

Taken together, one would wish to compute 
\begin{itemize}
 \item pulled front speeds from linear (algebraic) information;
 \item pushed front speeds from nonlinear boundary value problems. 
\end{itemize}
For this strategy to work reliably, one clearly needs to 
\begin{itemize}
 \item determine transitions from pulled to pushed front propagation;
 \item find predictions for pushed front speeds near the pulled-to-pushed transition.
\end{itemize}
The latter part is necessary since exponential convergence of pushed fronts speeds either in time $t$ or in domain size $L$ is slow, with exponential rate converging to zero near the transition to pulled front propagation. 

Our results in this regard can be summarized as follows:
\begin{itemize}
 \item we develop numerical continuation for both pushed and pulled fronts that continues past pushed-to-pulled transitions;
 \item we identify a computable criterion for a pushed-to-pulled transition;
 \item we predict pushed speeds near the transition via leading-order corrections to linear speeds.
\end{itemize}
The main analytical result, precisely formulated in Theorems~\ref{t: pulled pushed unfolding}--\ref{t: pulled pushed stability}, can be informally stated as follows. 

\textbf{Theorem} (Pushed-to-pulled transition) \emph{
The pushed-to-pulled transition is a codimension-1 bifurcation. For a suitable orientation of the generic parameter $\mu$, pulled fronts with speed $c_\mathrm{lin}(\mu)$ are marginally stable for $\mu>0$ and unstable for $\mu<0$. Pushed fronts exist and are marginally stable for $\mu<0$ with leading-order speed $c_\mathrm{pushed}=c_\mathrm{lin}(\mu)+c_2 \mu^2$ for some $c_2>0$. Pulled fronts are monotone in the leading edge when stable, $\mu>0$, and non-monotone when unstable, $\mu<0$.
}

Numerical algorithms are a natural consequence of our analysis, in which we identify ``explicit'' tail behavior of pulled fronts so that we can find pulled and pushed fronts analytically as strongly localized corrections to this tail-behavior. In the remainder of this introduction, we describe the general setup we use to formulate our results and formulate conditions for a pushed-to-pulled transition and for a generic unfolding. We then state our main analytical results on existence and marginal stability, including a brief discussion of front selection. The remainder of this paper is occupied by proofs of these main results, their applications to numerical algorithms, and implications for several concrete model PDEs. 

\paragraph{Setup.}

To fix ideas, consider the semilinear  parabolic-elliptic system
\begin{align}\label{e:pe}
	M u_t = \mathcal{P}(\partial_x) u + f(u; \mu), \quad u \in \R^n,
\end{align}
with parameter $\mu \in \R$. We assume that $M \in \R^{n \times n}$ is a diagonal matrix whose first $k$ diagonal entries are equal to 1, with all other diagonal entries equal to zero; we notably allow $k=n$, the purely parabolic case but of course neeed $k\geq 1$. For the differential operator, we assume the ellipticity condition
\begin{equation}\label{e: ellipticity}
 \mathcal{P}(\partial_x)=\sum_{j=1}^{2m} P_j \partial_x^j,\   P_j\in \R^{n\times n}, \  \Re(-1)^m \lambda<0 \text{ for all eigenvalues }\lambda \text{ of } P_{2m}.
\end{equation}
Note that we choose $P_0=0$, absorbing constant terms into $f$. 

The nonlinearity is assumed to be smooth, of class $C^2$, and allow for a trivial equilibrium, $f(0;0)=0$, and a state $u_-\in\R^n$ that is selected as a result of the invasion process, $f(u_-;0)=0$. For simplicity, we assume that these states are independent of the parameter $\mu$, $f(0;\mu)=f(u_-;\mu)=0$, possibly first changing coordinates in $\R^n$ in a $\mu$-dependent fashion. The analysis and results presented here will hold true for nonlinearities $f=f(u,\partial_x u,\ldots,\partial_x^{2m-1}u;\mu)$ and when allowing for $\mu$-dependence in $\mathcal{P}$ , or in cases where the order of $\mathcal{P}$ changes in different components, and we choose the current setup for notational simplicity, only.

\paragraph{Instability in the leading edge, spreading speeds, and leading-edge profiles.}

We assume that the trivial state $u=0$ is unstable. We therefore assume that the linearization 
in the leading edge
\begin{align*}
   Mu_t= (\mathcal{P}(\partial_x) + f_u (0; \mu)) u,
\end{align*}
possesses exponentially growing solutions $\mathrm{exp}(\lambda t+ i kx)u_0$ for some $u_0\in \R^n$, $k\in\R$, $\Re\lambda>0$. In a comoving frame, this instability may be convective in nature, that is, the solution to 
\begin{align}\label{e:lin}
   Mu_t= (\mathcal{P}(\partial_x) + cMu_x+ f_u (0; \mu)) u, \ x\in\R, \qquad u(t=0,x)=I \delta(x),
\end{align}
with identity matrix $I$ and Dirac-delta $\delta(x)$ may decay exponentially in any finite interval $x\in[-L,L]$. In fact, the results in \cite{HolzerScheelPointwiseGrowth} show that pointwise exponential decay holds for all sufficiently large speeds $c\in (c_\mathrm{lin},\infty)$. At the critical speed $c_\mathrm{lin}$, pointwise exponential decay is obstructed by the presence of a singularity of the resolvent Green's function on the imaginary axis. We assume here that this singularity is located at $\lambda=0$ and, as was shown in \cite{HolzerScheelPointwiseGrowth} to be generically the case, is given by a simple pinched double root of the dispersion relation; see Hypothesis~\ref{h:sdr} for further details. 

Much of the discussion until now comprises marginal stability information. The assumptions that we need for the proof of our main bifurcation result are slightly weaker and we shall formulate those now. We will later comment on the relation to marginal stability as discussed here. 

First, consider  the linearization at $u=0$  in a comoving frame with speed $c$ \eqref{e:lin}, take Fourier-Laplace transform $u(t,x)=\mathrm{exp}({\lambda t + \nu x})$, and find the symbol
\begin{align}\label{e:sym}
	A(\lambda,\nu, c; \mu) = \mathcal{P}(\nu) + c \nu M + f_u (0; \mu) - \lambda M. 
\end{align} 
Associated with this linearization is the dispersion relation 
\begin{equation}\label{e:disp}
 d_c(\lambda,\nu;\mu)=d_0(\lambda-c\nu,\nu;\mu)=\mathrm{det}\,A( \lambda,\nu, c; \mu).
\end{equation}
The next hypothesis describes a generic singularity of the pointwise Green's function at $\lambda=0$. It roughly states that two solutions $\mathrm{exp}(\lambda t+\nu_j(\lambda)x)u_j(\lambda)$, $j=1,2$ collide at $\lambda=0$, $\nu_{1/2}(0)=\nu_0$, $u_{1/2}(0)=u_0$, with generic unfolding in $\lambda$. 

\begin{hyp}[Simple double root]\label{h:sdr}
 We assume that two spatial roots $\nu$ of the dispersion relation collide at $\lambda=0$ for the critical speed $c_0$ at $\mu=0$:
 \[
  d_{c_0}(0,\nu_0;0)=0, \  \partial_\nu d_{c_0}(0,\nu_0;0)=0, \  \partial_{\nu\nu} d_{c_0}(0,\nu_0;0)\partial_{\lambda} d_{c_0}(0,\nu_0;0)< 0,
 \]
for some $\nu_0<0$.\footnote{The terminology ``simple double root'' is motivated by the fact that $(0,\nu_0)$ is ``simple'' in a degree counting sense as a solution to the double root equation $d=\partial_\nu d=0$ assuming that $\partial_{\nu\nu}d,\partial_\lambda d\neq0$.}
\end{hyp}
We briefly note that such double roots at $\lambda=0$ are robust and can be continued in parameters.

\begin{lemma}[Robustness of simple double roots]\label{l: robustness sPDR}
    There exist smooth $\lambda_\mathrm{dr}(\mu,c)$ and $\nu_\mathrm{dr}(\mu,c)$, $\lambda_\mathrm{dr}(0,c_0)=0$ and $\nu_\mathrm{dr}(0,c_0)=\nu_0$, such that  
\begin{align*}
	                    d_{c}(\lambda_\mathrm{dr}(\mu,c),\nu_\mathrm{dr}(\mu,c);\mu)&=0, \qquad  
	  &\partial_\nu      d_{c}(\lambda_\mathrm{dr}(\mu,c),\nu_\mathrm{dr}(\mu,c);\mu)=0, \\  
	  \partial_{\nu\nu} d_{c}(\lambda_\mathrm{dr}(\mu,c),\nu_\mathrm{dr}(\mu,c);\mu)&\neq 0,\qquad    
	  &\partial_\lambda  d_{c}(\lambda_\mathrm{dr}(\mu,c),\nu_\mathrm{dr}(\mu,c);\mu)\neq 0.
	\end{align*}
Moreoever, adjusting $c$ as a function of the parameter $\mu$, the double root is located at the origin. That is, there exist smooth $c_\mathrm{lin} (\mu)$ and $\nu_\mathrm{lin}(\mu)$ with 
$c_\mathrm{lin}(0)=c_0$, $\nu_\mathrm{lin}(0)=\nu_0$, 
as given in  Hypothesis~\ref{hyp: double root}, solving
\begin{align*}
	  d_{c_\mathrm{lin}(\mu)}(0,\nu_\mathrm{lin}(\mu);\mu)&=0, \qquad  & \partial_\nu  d_{c_\mathrm{lin}(\mu)}(0,\nu_\mathrm{lin}(\mu);\mu)=0, \\  \partial_{\nu\nu}  d_{c_\mathrm{lin}(\mu)}(0,\nu_\mathrm{lin}(\mu);\mu)&\neq 0, \qquad  &\partial_{\lambda}  d_{c_\mathrm{lin}(\mu)}(0,\nu_\mathrm{lin}(\mu);\mu)\neq 0.
	\end{align*}
\end{lemma}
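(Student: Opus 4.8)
The plan is to obtain both assertions from two successive applications of the implicit function theorem, with Hypothesis~\ref{h:sdr} supplying exactly the transversality that each step requires.

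For the first part, I would regard the double-root conditions as the zero set of
\[
F(\lambda,\nu;\mu,c) = \bigl( d_c(\lambda,\nu;\mu),\ \partial_\nu d_c(\lambda,\nu;\mu) \bigr),
\]
viewed as a map in the unknowns $(\lambda,\nu)$ depending on parameters $(\mu,c)$. Since $d_c=\det A$ is polynomial in $(\lambda,\nu)$ with coefficients inheriting the regularity of $f$ in $\mu$ and entering affinely in $c$, the map $F$ is as regular as needed. Both components vanish at the base point $(0,\nu_0;0,c_0)$ by Hypothesis~\ref{h:sdr}. The Jacobian in $(\lambda,\nu)$ is
\[
D_{(\lambda,\nu)}F=\begin{pmatrix} \partial_\lambda d & \partial_\nu d \\ \partial_{\lambda\nu} d & \partial_{\nu\nu} d \end{pmatrix},
\]
and because $\partial_\nu d=0$ at the base point its determinant collapses to $\partial_\lambda d\,\partial_{\nu\nu} d$, which is nonzero---indeed strictly negative---by Hypothesis~\ref{h:sdr}. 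The implicit function theorem then delivers $\lambda_\mathrm{dr}(\mu,c)$ and $\nu_\mathrm{dr}(\mu,c)$ with the prescribed base values solving $d_c=\partial_\nu d_c=0$, and the two open conditions $\partial_{\nu\nu}d\neq0$, $\partial_\lambda d\neq0$ persist on a neighborhood by continuity.

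For the second part, I would solve the scalar equation $\lambda_\mathrm{dr}(\mu,c)=0$ for $c$ as a function of $\mu$. As $\lambda_\mathrm{dr}(0,c_0)=0$ already holds, the only point to verify is $\partial_c\lambda_\mathrm{dr}(0,c_0)\neq0$. Differentiating the identity $d_c(\lambda_\mathrm{dr},\nu_\mathrm{dr};\mu)=0$ in $c$ and using $\partial_\nu d=0$ at the base point gives $\partial_c\lambda_\mathrm{dr}=-\partial_c d/\partial_\lambda d$. Here the comoving structure $d_c(\lambda,\nu;\mu)=d_0(\lambda-c\nu,\nu;\mu)$ from \eqref{e:disp} is decisive, since it forces $\partial_c d_c=-\nu\,\partial_\lambda d_c$ and hence $\partial_c\lambda_\mathrm{dr}(0,c_0)=\nu_0\neq0$, using $\nu_0<0$. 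A final application of the implicit function theorem produces smooth $c_\mathrm{lin}(\mu)$ with $c_\mathrm{lin}(0)=c_0$ and $\lambda_\mathrm{dr}(\mu,c_\mathrm{lin}(\mu))=0$; setting $\nu_\mathrm{lin}(\mu)=\nu_\mathrm{dr}(\mu,c_\mathrm{lin}(\mu))$ and reading off the inherited nondegeneracy inequalities finishes the proof.

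There is no deep obstacle in this argument---the substance is the two transversality checks, and the only genuinely structural input is the translation identity $\partial_c d_c=-\nu\,\partial_\lambda d_c$ combined with $\nu_0\neq0$. The mild point worth recording is that $d_c$ takes real values for real arguments, so the continued double root stays real and the real implicit function theorem applies verbatim; the same computation runs unchanged in the holomorphic category should one later need to continue $(\lambda_\mathrm{dr},\nu_\mathrm{dr})$ into complex $(\lambda,\nu)$.
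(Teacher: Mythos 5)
Your proof is correct, but it takes a genuinely different route from the paper's on the main (second) claim. For the first claim the two arguments coincide: the paper also dismisses it in one sentence via the implicit function theorem applied to $(d_c,\partial_\nu d_c)=0$ in the unknowns $(\lambda,\nu)$, with the Jacobian determinant collapsing to $\partial_\lambda d\,\partial_{\nu\nu}d\neq 0$ exactly as you note. For the pinning of the double root at the origin, however, the paper abandons the scalar dispersion relation and works instead with the matrix-pencil formulation of Hypothesis~\ref{hyp: double root}: it fixes $\lambda=0$, sets up the augmented map $F(u_0,u_1,c,\nu;\mu)$ of \eqref{e:dismatrix} in the eigenvector, the generalized eigenvector, the speed, and the decay rate (with two normalization conditions), and shows the $(2n+2)$-dimensional linearization has trivial kernel using the range conditions $Mu_0^0\notin\rg A^{00}$ and $A^{02}u_0^0+A^{01}u_1^0\notin\rg A^{00}$ from \eqref{e: generic sn assumption}. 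Your chained two-step argument --- first $(\lambda_\mathrm{dr},\nu_\mathrm{dr})(\mu,c)$, then solving $\lambda_\mathrm{dr}(\mu,c)=0$ for $c$ --- is shorter and more elementary: it needs only Hypothesis~\ref{h:sdr}, bypasses the equivalence Lemma~\ref{l: disp gen evp equivalent}, and isolates the clean structural reason the speed parameter can pin the root, namely the frame-change identity $\partial_c d_c=-\nu\,\partial_\lambda d_c$ giving $\partial_c\lambda_\mathrm{dr}(0,c_0)=\nu_0\neq 0$. What the paper's heavier approach buys is stated explicitly in the text: it simultaneously produces the smooth continuations $u_0(\mu)$, $u_1(\mu)$ of the Jordan-chain vectors, which are needed for Corollary~\ref{c: asymptotic solutions} and for the far-field/core ansätze in Sections~\ref{s: pulled unfolding} and~\ref{s: pushed unfolding}. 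With your proof those vectors would have to be recovered afterwards by a separate (standard, but not free) perturbation argument for the one-dimensional kernel of $A(0,\nu_\mathrm{lin}(\mu),c_\mathrm{lin}(\mu);\mu)$ and its Jordan chain, so as a building block for the rest of the paper your lemma would need that supplement even though it fully proves the statement as written.
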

Double roots induce Jordan block type spatial behavior, as made precise in the following corollary. 
\begin{corollary}\label{c: asymptotic solutions}
	The linearization in the leading edge
	\begin{align*}
		(\mathcal{P}(\partial_x) + c_\mathrm{lin}(\mu) M \partial_x + f_u (0; \mu)) u = 0
	\end{align*}
	has solutions
	\begin{align}\label{e: le}
		u(x, \mu) = (\alpha (u_0 (\mu) x + u_1 (\mu)) + \beta u_0 (\mu)) e^{\nu_\mathrm{lin}(\mu) x},\qquad \alpha, \beta \in \R,
	\end{align}
	where $u_0(\mu),u_1(\mu)\in \R^n$ are smooth.
\end{corollary}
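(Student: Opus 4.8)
The plan is to read off the solution structure of this constant-coefficient (in $x$) linear system directly from the symbol in \eqref{e:sym}. Write $\mathcal{L}_\mu := \mathcal{P}(\partial_x) + c_\mathrm{lin}(\mu) M \partial_x + f_u(0;\mu)$ and $A_\mu(\nu) := A(0,\nu,c_\mathrm{lin}(\mu);\mu) = \mathcal{P}(\nu) + c_\mathrm{lin}(\mu)\nu M + f_u(0;\mu)$, so that $\mathcal{L}_\mu(e^{\nu x} v) = e^{\nu x} A_\mu(\nu) v$ for every constant vector $v \in \R^n$. By \eqref{e:disp} and Lemma~\ref{l: robustness sPDR}, $\det A_\mu(\nu_\mathrm{lin}(\mu)) = d_{c_\mathrm{lin}(\mu)}(0,\nu_\mathrm{lin}(\mu);\mu) = 0$, so $A_\mu(\nu_\mathrm{lin}(\mu))$ has a nontrivial kernel; picking a spanning vector $u_0(\mu)$ immediately yields the first solution $u_0(\mu)e^{\nu_\mathrm{lin}(\mu)x}$, i.e. the $\beta$-term in \eqref{e: le}.

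For the algebraically growing solution I would use that $\mathcal{L}_\mu$ has $x$-independent coefficients and hence commutes with $\partial_\nu$. Since $x e^{\nu x} = \partial_\nu e^{\nu x}$, differentiating the identity $\mathcal{L}_\mu(e^{\nu x}v) = e^{\nu x}A_\mu(\nu)v$ in $\nu$ gives $\mathcal{L}_\mu(x e^{\nu x} v) = x e^{\nu x} A_\mu(\nu)v + e^{\nu x}A_\mu'(\nu)v$, where $A_\mu'(\nu) = \partial_\nu A_\mu(\nu) = \mathcal{P}'(\nu) + c_\mathrm{lin}(\mu)M$. Consequently, for the ansatz $(u_0 x + u_1)e^{\nu_\mathrm{lin}x}$ with $A_\mu(\nu_\mathrm{lin})u_0 = 0$,
\[
\mathcal{L}_\mu\big((u_0 x + u_1)e^{\nu_\mathrm{lin}(\mu)x}\big) = e^{\nu_\mathrm{lin}(\mu)x}\big(A_\mu(\nu_\mathrm{lin}(\mu))u_1 + A_\mu'(\nu_\mathrm{lin}(\mu))u_0\big),
\]
so the entire construction reduces to solving the inhomogeneous linear equation $A_\mu(\nu_\mathrm{lin}(\mu))u_1 = -A_\mu'(\nu_\mathrm{lin}(\mu))u_0$ for $u_1 = u_1(\mu)$; the solutions listed in \eqref{e: le} are then the span of $u_0 e^{\nu_\mathrm{lin}x}$ and $(u_0 x + u_1)e^{\nu_\mathrm{lin}x}$.

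The crux, and the step I expect to be the main obstacle, is the Fredholm solvability of this equation, which must be matched to the double-root condition $\partial_\nu d = 0$ from Lemma~\ref{l: robustness sPDR}. Here I would first record that Hypothesis~\ref{h:sdr} describes the two colliding modes as sharing a single eigenvector $u_0$, so the kernel of $A_\mu(\nu_\mathrm{lin}(\mu))$ is one-dimensional (a genuine size-two Jordan block, rather than two independent modes; the order of vanishing of $\det A_\mu$ forces $\dim\ker \le 2$, and the collision singles out $\dim\ker = 1$). Letting $w_0 = w_0(\mu)$ span the one-dimensional left kernel, $w_0^T A_\mu(\nu_\mathrm{lin}) = 0$, solvability is equivalent to $w_0^T A_\mu'(\nu_\mathrm{lin})u_0 = 0$. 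To verify this I would invoke Jacobi's formula $\partial_\nu \det A_\mu(\nu) = \mathrm{tr}\big(\mathrm{adj}(A_\mu(\nu))A_\mu'(\nu)\big)$ together with the fact that at a rank-$(n-1)$ matrix the adjugate is the rank-one matrix $\mathrm{adj}(A_\mu(\nu_\mathrm{lin})) = \gamma\,u_0 w_0^T$ with $\gamma \neq 0$, since its columns lie in $\ker A_\mu$ and its rows in the left kernel. This gives $\partial_\nu d_{c_\mathrm{lin}}(0,\nu_\mathrm{lin};\mu) = \gamma\,w_0^T A_\mu'(\nu_\mathrm{lin})u_0$, and because the left-hand side vanishes while $\gamma \neq 0$, the solvability condition follows. (Only $\partial_\nu d = 0$ is needed here; $\partial_{\nu\nu}d,\partial_\lambda d \neq 0$ belong to the finer pinched-double-root picture.)

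Finally, for smoothness in $\mu$ I would exploit that $A_\mu(\nu_\mathrm{lin}(\mu))$ and its adjugate vary smoothly with $\mu$ and that, by the rank-one structure above, the kernel stays one-dimensional near $\mu = 0$. A fixed nonzero column, respectively row, of $\mathrm{adj}(A_\mu(\nu_\mathrm{lin}(\mu)))$ then furnishes smooth choices of $u_0(\mu)$ and $w_0(\mu)$. Bordering with transversal vectors produces the smoothly varying, invertible matrix
\[
\mathcal{M}(\mu) = \begin{pmatrix} A_\mu(\nu_\mathrm{lin}(\mu)) & w_0(\mu) \\ u_0(\mu)^T & 0 \end{pmatrix},
\]
which is nonsingular because $|w_0|^2, |u_0|^2 \neq 0$; solving $\mathcal{M}(\mu)(u_1, s)^T = (-A_\mu'(\nu_\mathrm{lin}(\mu))u_0(\mu),\,0)^T$ forces $s = 0$ by the solvability condition just established and returns a smooth $u_1(\mu)$ normalized by $u_0^T u_1 = 0$. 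This produces the two smooth vectors $u_0(\mu),u_1(\mu)$ and hence all solutions in \eqref{e: le}.
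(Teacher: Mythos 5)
Your proof is correct, and its computational core---the observation that the ansatz $(\alpha(u_0x+u_1)+\beta u_0)e^{\nu_\mathrm{lin}x}$ reduces the leading-edge equation to the Jordan-chain relations $A_\mu(\nu_\mathrm{lin})u_0=0$ and $A_\mu(\nu_\mathrm{lin})u_1+\partial_\nu A_\mu(\nu_\mathrm{lin})u_0=0$---is exactly the paper's, which writes these as \eqref{e:g1}--\eqref{e:g2}. Where you genuinely differ is in how the chain vectors $u_0(\mu),u_1(\mu)$ are produced. The paper never solves for them inside this proof: the chain $(u_0^0,u_1^0)$ at $\mu=0$ is \emph{postulated} in the matrix-pencil form of the double-root assumption (Hypothesis~\ref{hyp: double root}, equivalent to the determinant form by Lemma~\ref{l: disp gen evp equivalent}), and the implicit-function-theorem proof of Lemma~\ref{l: robustness sPDR}, run on the bordered map \eqref{e:dismatrix}, already delivers smooth continuations $u_0(\mu),u_1(\mu)$ alongside $c_\mathrm{lin}(\mu),\nu_\mathrm{lin}(\mu)$; the corollary is then a three-line verification. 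You instead start from the determinant formulation alone and rebuild the chain by linear algebra: Jacobi's formula together with the rank-one structure of the adjugate identifies the double-root condition $\partial_\nu d=0$ with the solvability condition $w_0^T\partial_\nu A_\mu(\nu_\mathrm{lin})u_0=0$, and a bordered matrix yields a smooth, normalized $u_1(\mu)$. In effect you re-prove one direction of Lemma~\ref{l: disp gen evp equivalent} and the eigenvector-continuation content of Lemma~\ref{l: robustness sPDR}; this costs more work, but it is self-contained and makes explicit the dictionary between roots of the determinant and Jordan chains, which the paper delegates to \cite{gohberg2006invariant}.

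One step you should tighten: your justification that $\ker A_\mu(\nu_\mathrm{lin}(\mu))$ is one-dimensional (``the collision singles out $\dim\ker=1$'') is not an argument, and your parenthetical claim that only $\partial_\nu d=0$ is needed is misleading on precisely this point---it is $\partial_\lambda d\neq 0$ that rules out a semisimple double eigenvalue, for which the corollary's linearly growing solution need not exist. The fix is available from your own toolkit: if the kernel were two-dimensional, every $(n-1)\times(n-1)$ minor would vanish, so $\mathrm{adj}(A_\mu(\nu_\mathrm{lin}))=0$ and then $\partial_\lambda d=\mathrm{tr}\bigl(\mathrm{adj}(A_\mu)\,\partial_\lambda A_\mu\bigr)=0$, contradicting Lemma~\ref{l: robustness sPDR}. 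Alternatively, cite Hypothesis~\ref{hyp: double root} at $\mu=0$, where $\ker A^{00}=\spn(u_0^0)$ is assumed, and use lower semicontinuity of rank together with $\det A_\mu(\nu_\mathrm{lin}(\mu))=0$ to conclude the rank stays equal to $n-1$ for small $\mu$. With either repair the argument is complete.
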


Proofs will be given in Section~\ref{s: preliminaries}.

\paragraph{Existence of fronts, characterization of pushed-to-pulled transition.} 
The transition between pushed and pulled fronts crucially relies on nonlinear contributions. Rather than making those explicit in a specific example, we make conceptual assumptions on existence and leading-edge asymptotics of a nonlinear front profile. 

\begin{hyp}[Existence of front at a pushed-to-pulled transition]\label{hyp: front existence}
	For $\mu = 0$, there exists a front $q_0 (x)$, that is, a stationary solution in the frame moving with the speed $c_0$ from Hypothesis~\ref{h:sdr},  solving
	\begin{align*}
		(\mathcal{P}(\partial_x) + c_0 M \partial_x) q_0 + f(q_0; 0) = 0,
	\end{align*}
	with asymptotics 
	\begin{align}\label{e: asyle}
		q_0 (x) = u_0(0) e^{\nu_0 x} + \mathrm{O}(e^{(\nu_0 - \eta) x}), \quad x \to \infty
	\end{align}
	and 
	\begin{align}
		q_0 (x) = u_- + \mathrm{O}(e^{\eta x}), \quad x \to -\infty
	\end{align}
	for some $\eta > 0$, with $u_0$ defined in Corollary~\ref{c: asymptotic solutions}. 
\end{hyp}
Note that, in particular, the asymptotics in the leading edge $x\to\infty$ do not include a linear term $x\, \mathrm{exp}(\nu_0 x)$ as generically expected from Corollary~\ref{c: asymptotic solutions}. The absence of this term is the key codimension-one assumption encoding the transition. 
In the hypothesis, we set a non-zero coefficient of $u_0(0) e^{\nu_0 x}$ in \eqref{e: asyle} to 1, which can readily be achieved by scaling $u_0(0)$. 
The asymptotics \eqref{e: asyle} are consistent with the existence of solutions to the linearization of the form \eqref{e: le} shown in Corollary~\ref{c: asymptotic solutions}. Thinking of the existence problem as a shooting problem in an ODE, Hypothesis~\ref{hyp: front existence} corresponds to the existence of a heteroclinic orbit between the equilibria corresponding to $u_-$ and $0$. Associated with this heteroclinic is a dimension counting question: what is the dimension of the unstable manifold of $u_-$, and what is the dimension of the strong stable manifold of $0$ associated with decay rate $\mathrm{exp}(\nu_0 +\eps x)$. Our next assumptions will clarify this dimension counting question via assumptions on Fredholm properties of the linearization at the heteroclinic profile in suitable exponentially weighted spaces. Those assumptions will in particular clarify that the assumption on vanishing of the linear term $x\,\mathrm{exp}(\nu_0 x)$ makes Hypothesis~\ref{hyp: front existence} a codimension-one assumption, corresponding to the codimension-one situation of a transition between pushed and pulled front invasion.

\paragraph{Fredholm indices and pinching conditions.}
We let $B_0 = \mathcal{P}(\partial_x) + c_0 M \partial_x + f_u (q_0; 0)$ denote the linearization at the front described in Hypothesis~\ref{hyp: front existence}. In order to capture solutions with precise leading-edge asymptotics, we introduce exponentially weighted function spaces as follows. 

For $\eta_\pm \in \R$, we let $\omega_{\eta_-, \eta_+}$ be a smooth positive weight function satisfying 
\begin{align}
	\omega_{\eta_-, \eta_+} (x) = \begin{cases}
		e^{\eta_- x}, & x \leq -1, \\
		e^{\eta_+ x}, & x \geq 1. 
	\end{cases}
\end{align}
We let $W^{k, p}_{\mathrm{exp}, \eta_-, \eta_+} (\R)$ denote the corresponding weighted Sobolev space, with norm
\begin{align}
	\| f \|_{W^{k,p}_{\mathrm{exp, \eta_-, \eta_+}}} = \| \omega_{\eta_-, \eta_+} f \|_{W^{k,p}}. 
\end{align}
When $k = 0$, we write $W^{0,p}_{\mathrm{exp},\eta_-, \eta_+} (\R) = L^p_{\mathrm{exp}, \eta_-, \eta_+} (\R)$ with corresponding notation for the norms. Ellipticity \eqref{e: ellipticity} guarantees that $B_0$ is closed with domain  $W^{2m,p}_{\mathrm{exp},\eta_-, \eta_+}$ on $L^p_{\mathrm{exp}, \eta_-, \eta_+} (\R)$.  We denote $\omega_{0, -\nu_0} =: \omega_0$, and let $\mcl_0 = \omega_0 B_0 \omega_0^{-1}$. 
\begin{hyp}[Fredholm properties]\label{hyp: fredholm properties}
	For all  $\eps > 0$ sufficiently small, we assume that 
	$B_0: H^{2m}_{\mathrm{exp}, 0,  -\nu_0 - \eps} \to L^2_{\mathrm{exp}, 0,  -\nu_0 - \eps}$  is Fredholm of index $i=+1$ with (minimal) one-dimensional kernel spanned by the derivative of the front $q_0'$. Furthermore, we assume that 
	$B_0: H^{2m}_{\mathrm{exp}, 0,  -\nu_0 + \eps} \to L^2_{\mathrm{exp}, 0,  -\nu_0 + \eps}$ is Fredholm of index $i=-1$ with trivial kernel and we denote by $\Phi$ a vector spanning the one-dimensional cokernel. We let $\phi = \omega_{0, -\nu_0} \Phi$ denote the corresponding basis for the cokernel of $\mcl_0$. 
\end{hyp}
Some comments are in order here. First, the relation between Freholm indices with weights $-\nu_0\pm\eps$ relates to the fact that $\nu_0$ is double as a root of the dispersion relation, so that the stronger exponential weight adds two boundary conditions at $+\infty$, eliminating solutions with asymptotics as given in Corollary~\ref{c: asymptotic solutions}. The assumption on the change of Fredholm index from $+1$ to $-1$ upon enforcing stronger exponential decay then implies that there are no further roots $d_{c_0}(0,ik;0)=0$ on the imaginary axis.  Second, rewriting the existence equation as a first-order ODE, we claim that this hypothesis guarantees a standard transversality of intersection  of unstable and strong stable manifolds. In fact, the Fredholm index of the differential operators is given by the difference in Morse indices at the asymptotic states in such a first-order ODE formulation; see \cite{ssmorse} for an account relevant to our situation. If we let $i_u$ denote the dimension of the unstable manifold of $u_-$ and $i_{ss}$ the dimension of the strong stable manifold at $0$ comprising solutions with decay at least $\mathrm{exp}((\nu_0+\eps)x)$, we find that the difference of associated Morse indices equals the Fredholm index, $i_u-(2mn-i_{ss})=1.$ Since the kernel is one-dimensional, by assumption, we conclude that these manifolds intersect transversely, that is, the dimension of the sum of their tangent spaces is $2mn$. Note that the Fredholm assumption implicitly implies hyperbolicity of $u_-$ and that there are no eigenvalues (or roots of the dispersion relation $d_{c_0}(0,\nu))$ with $\Re\nu=\nu_0$.

The dimension counting we presented thus far implies that fronts with exponential rate of decay no weaker than $\nu_0$ are robust. We shall however see that for those fronts a linear growth term $x\,\mathrm{exp}(\nu_0 x)$ is generically present with nonzero coefficient that depends smoothly on parameters; see Theorem~\ref{t: pulled pushed unfolding}, below.


\paragraph{Main results.} We are now ready to state our main results. Throughout we assume Hypothesis~\ref{h:sdr}  guaranteeing a simple double root with spatial decay rate $\nu_0$ at the linear spreading speed $c_0$, Hypothesis~\ref{hyp: front existence} on existence of a critical profile of a pulled front propagating with speed $c_0$ at $\mu=0$, with pure exponential asymptotics $\mathrm{exp}(\nu_0x)$ in the leading edge, and Hypothesis~\ref{hyp: fredholm properties} on Fredholm properties and minimality of spectrum at the origin. We also recall the robustness of linear spreading speeds $c_\mathrm{lin}(\mu)$ in the parameter $\mu$ from Lemma~\ref{l: robustness sPDR}.

\begin{thm}[pulled-to-pushed unfolding --- existence]\label{t: pulled pushed unfolding}
	For $\mu\sim 0$, there exist two  families of fronts $q_\mathrm{pl} (x; \mu)$ and $q_\mathrm{ps} (x; \mu)$,  stationary solutions to \eqref{e:pe} in a frame with speeds $c=c_\mathrm{pl}(\mu)=c_\mathrm{lin}(\mu)$ and $c=c_\mathrm{ps}(\mu)$, respectively.
	Both families $q_\mathrm{pl/ps}(\cdot,\mu)$ are continuous in $\mu$ in the local topology of $C^{2m}$, both bifurcate from $q_*$, that is, 
	$q_\mathrm{pl/ps}(x,0)=q_*(x)$ and converge to $u_-$ in their wake 
	\begin{align}
		q_\mathrm{pl/ps} (x; \mu) & =  u_-+\mathrm{O}(e^{\delta x}) ,  \quad  x \to -\infty,
	\end{align}
	for some $\delta>0$. 
	
	In the leading edge, we have double-root asymptotics for $q_\mathrm{pl}$,
	\begin{align}
		q_\mathrm{pl} (x; \mu) &= [\alpha(\mu) (u_0 (\mu) x + u_1 (\mu)) + \beta(\mu) u_0(\mu)] e^{\nu_\mathrm{lin} (\mu) x} + \mathrm{O}(e^{(\nu_0-\delta) x}) ,  \quad   x \to +\infty  
	\end{align}
	where $\beta(\mu) = 1 + \mathrm{O}(\mu)$ and $\alpha(\mu) = \mathrm{O}(\mu)$ are smooth, and $\delta>0$. We have pure exponential asymptotics for $q_\mathrm{ps}$,
	\begin{align}
		q_\mathrm{ps} (x; \mu) &= a(\mu) u^\mathrm{ps}_- (\mu, \sigma(\mu)) e^{\nu^\mathrm{ps}_- (\mu, \sigma(\mu)) x}+ \mathrm{O}(e^{(\nu_0-\delta) x}) , \quad x \to +\infty, 
	\end{align}
	for some smooth functions $\sigma(\mu) \in \R$, $\nu_-^\mathrm{ps}(\mu, \sigma) \in \R$, $u^\mathrm{ps}_-(\mu, \sigma) \in \R^n$, $a(\mu) = 1 + \mathrm{O}(\mu)$, and $\delta>0$.   
	
	Moreoever, if $\alpha'(0)\neq 0$, 
	then 
	\begin{align}
		c_\mathrm{ps}(\mu)= c_\mathrm{lin}(\mu) + c_2\mu^2+\mathrm{O}(\mu^3),\qquad c_2=\frac{\partial_{\nu\nu}d_{c_0}(0,\nu_0;0)}{2\partial_{\lambda}d_{c_0}(0,\nu_0;0)\nu_0} \alpha'(0)^2 >0. \label{e: pushed speed expansion}
	\end{align} 
\end{thm}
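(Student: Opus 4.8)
The plan is to construct both fronts through a far-field/core decomposition and to reduce the existence problem to a scalar equation by Lyapunov--Schmidt against the one-dimensional cokernel from Hypothesis~\ref{hyp: fredholm properties}. Fix a smooth cutoff $\chi_+$ with $\chi_+\equiv 1$ on $x\geq 1$ and $\chi_+\equiv 0$ on $x\leq -1$, and write a candidate front moving with speed $c$ as $q=\chi_+\,p+w$, where $p$ is an explicit far-field ansatz built from the leading-edge solutions of Corollary~\ref{c: asymptotic solutions} and $w\in H^{2m}_{\mathrm{exp},0,-\nu_0+\eps}$ decays strictly faster than $e^{\nu_0 x}$. At the base configuration $\mu=0$, $c=c_0$, Hypothesis~\ref{hyp: front existence} permits $p=u_0(0)e^{\nu_0 x}$ and $w_0=q_0-\chi_+ u_0(0)e^{\nu_0 x}\in H^{2m}_{\mathrm{exp},0,-\nu_0+\eps}$, so that
\[
 F(w,p,c;\mu):=(\mathcal{P}(\partial_x)+cM\partial_x)(\chi_+ p+w)+f(\chi_+ p+w;\mu)
\]
vanishes there. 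Because $p$ solves the constant-coefficient leading-edge equation, the contributions to $F$ supported near $+\infty$ cancel to quadratic order and $F$ genuinely maps into the strong space $L^2_{\mathrm{exp},0,-\nu_0+\eps}$.

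The linearization $D_wF$ at the base point is $B_0$ on $H^{2m}_{\mathrm{exp},0,-\nu_0+\eps}$, Fredholm of index $-1$ with trivial kernel and cokernel spanned by $\Phi$. The implicit function theorem therefore solves $F=0$ uniquely for $w$ on the range of $B_0$, leaving the scalar solvability condition $G:=\langle\Phi,F\rangle=0$. Write the two leading-edge modes of Corollary~\ref{c: asymptotic solutions} as $\psi_1\sim(u_0(\mu)x+u_1(\mu))e^{\nu_\mathrm{lin}x}$ and $\psi_2\sim u_0(\mu)e^{\nu_\mathrm{lin}x}$; since $\psi_2$ agrees to leading order with the kernel $q_0'$, one checks $B_0(\chi_+\psi_2)$ lies in the range, so the $\psi_2$-coefficient is a free normalization while the $\psi_1$-coefficient covers the cokernel. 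For the pulled family I fix $c=c_\mathrm{lin}(\mu)$ and take the double-root ansatz $p=\alpha\psi_1+\beta\psi_2$; the condition $G=0$ then determines $\alpha=\alpha(\mu)$ with $\alpha(0)=0$, while $\beta$ is fixed by normalization with $\beta(0)=1$, yielding smooth $\alpha(\mu)=\alpha'(0)\mu+\mathrm{O}(\mu^2)$ and $\beta(\mu)=1+\mathrm{O}(\mu)$, with $\alpha'(0)=-\langle\Phi,\partial_\mu F\rangle/\langle\Phi,B_0(\chi_+\psi_1)\rangle$. Exponential convergence to $u_-$ in the wake follows from hyperbolicity of $u_-$ and the decomposition on the $x\to-\infty$ side.

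For the pushed family I let the speed vary and impose pure-exponential decay at the steeper spatial root $\nu^\mathrm{ps}_-(\mu,\sigma)=\nu_\mathrm{lin}(\mu)-\sigma$ with eigenvector $u^\mathrm{ps}_-(\mu,\sigma)\in\ker A(0,\nu^\mathrm{ps}_-,c;\mu)$, so that $p=a\,u^\mathrm{ps}_-(\mu,\sigma)e^{\nu^\mathrm{ps}_-(\mu,\sigma)x}$. Requiring $\nu^\mathrm{ps}_-$ to be a spatial root at speed $c$ is purely algebraic; expanding $d_c(0,\nu;\mu)$ about the simple double root and using $\partial_c d_{c_0}=-\nu_0\,\partial_\lambda d_{c_0}$ gives
\[
 c(\mu,\sigma)=c_\mathrm{lin}(\mu)+\frac{\partial_{\nu\nu}d_{c_0}(0,\nu_0;0)}{2\nu_0\,\partial_\lambda d_{c_0}(0,\nu_0;0)}\,\sigma^2+\mathrm{O}(\sigma^3).
\]
Solving $w$ as before, the amplitude $a$ is again a free normalization and $G(\sigma;\mu)=0$ is a single equation for $\sigma$, solvable by the implicit function theorem because $\partial_\sigma G(0;0)\propto\langle\Phi,B_0(\chi_+\psi_1)\rangle\neq 0$. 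The decisive point is that $e^{-\sigma x}=1-\sigma x+\mathrm{O}(\sigma^2)$ makes the $\psi_1$-coefficient of the pushed far-field equal to $-\sigma+\mathrm{O}(\sigma^2)$, precisely the role played by $\alpha$ in the pulled ansatz; since both solvability conditions use the same pairing against $\Phi$, the same $\mu$-forcing $\langle\Phi,\partial_\mu F\rangle$, and the speed correction enters $G$ only at order $\sigma^2$, we obtain $\sigma(\mu)=-\alpha(\mu)+\mathrm{O}(\mu^2)=-\alpha'(0)\mu+\mathrm{O}(\mu^2)$. Substituting into the dispersion expansion yields
\[
 c_\mathrm{ps}(\mu)-c_\mathrm{lin}(\mu)=\frac{\partial_{\nu\nu}d_{c_0}(0,\nu_0;0)}{2\nu_0\,\partial_\lambda d_{c_0}(0,\nu_0;0)}\,\alpha'(0)^2\mu^2+\mathrm{O}(\mu^3)=c_2\mu^2+\mathrm{O}(\mu^3),
\]
and $c_2>0$ whenever $\alpha'(0)\neq 0$, since Hypothesis~\ref{h:sdr} forces $\partial_{\nu\nu}d_{c_0}\,\partial_\lambda d_{c_0}<0$ while $\nu_0<0$.

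The main obstacle is the nondegeneracy shared by both reductions: I must show the pairing $\langle\Phi,B_0(\chi_+\psi_1)\rangle$ of the cokernel with the image of the generalized-eigenfunction mode $\psi_1$ is nonzero, since this is the common denominator $\partial_\alpha G=-\partial_\sigma G$ that makes the implicit function theorem applicable and fixes the leading coefficients. This requires translating the algebraic simple-double-root condition $\partial_{\nu\nu}d_{c_0}\,\partial_\lambda d_{c_0}\neq 0$ of Hypothesis~\ref{h:sdr} into the functional-analytic pairing, and building $p$ so that it depends smoothly on $\sigma$ across the coalescence of the two spatial roots---most naturally through symmetric combinations of $e^{\nu_\pm x}u_\pm$ that limit to the Jordan pair $\psi_{1,2}$ as $\sigma\to 0$. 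Once this nondegeneracy and the smooth unfolding of the far-field are established, the remaining steps---the localization estimates placing $F$ in the strong space, the smooth dependence of $w$, and the wake asymptotics---are routine.
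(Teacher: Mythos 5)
Your construction is essentially the paper's own: the same weighted spaces, the same far-field/core ansatz (pulled far field $[\alpha(u_0x+u_1)+\beta u_0]e^{\nu_\mathrm{lin}x}$, pushed far field a pure exponential at the steeper root), and the same mechanism for the speed expansion, namely that to linear order the pushed parameter $\sigma$ plays the role of $-\alpha$, so that $\sigma'(0)=-\alpha'(0)$ in your parametrization and the dispersion relation converts $\sigma(\mu)^2$ into the stated $c_2\mu^2$ with the correct sign. The differences are cosmetic: you do Lyapunov--Schmidt against the cokernel with the amplitude frozen as normalization, where the paper borders the operator with the unknowns $(\alpha,\beta)$ (resp.\ $(a,\sigma)$), adds a phase condition, and applies the implicit function theorem directly (Sections~\ref{s: pulled unfolding} and~\ref{s: pushed unfolding}); you parametrize the pushed family by the decay-rate offset $\nu_\mathrm{lin}(\mu)-\sigma$, where the paper writes $c=c_\mathrm{lin}(\mu)+\sigma^2$ and recovers the root splitting from Lemma~\ref{l: saddle node}. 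Your heuristic that both reduced equations carry the same $\mu$-forcing (because at $\sigma=0$ the pushed far field coincides with the pulled far field at $\alpha=0$) is also the content of the paper's Proposition~\ref{p: alpha sigma relation}, there established by computing both cokernel projections explicitly.

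The genuine gap is exactly at the point you flag as the ``main obstacle,'' and your proposed route to close it would fail. You want to prove the nondegeneracy $\langle\Phi,B_0[(u_0^0x+u_1^0)\chi_+e^{\nu_0\cdot}]\rangle\neq0$ by ``translating the algebraic simple-double-root condition $\partial_{\nu\nu}d\,\partial_\lambda d\neq0$ of Hypothesis~\ref{h:sdr} into the functional-analytic pairing.'' This cannot work: $\Phi$ is the cokernel of the linearization $B_0$ at the full front $q_0$, a global object determined by the entire profile, while Hypothesis~\ref{h:sdr} constrains only the constant-coefficient problem in the leading edge; the algebraic condition is consistent with the pairing vanishing or not. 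The correct source is Hypothesis~\ref{hyp: fredholm properties} together with Hypothesis~\ref{hyp: front existence}, as in the paper's Lemma~\ref{l: projections}: if the pairing vanished, your own far-field/core reduction would produce, for small $\alpha\neq0$, kernel elements of $B_0$ in the weak space $H^{2m}_{\mathrm{exp},0,-\nu_0-\eps}$ containing a nontrivial linearly growing term $xe^{\nu_0x}$; such an element is linearly independent of $q_0'$, whose leading-edge asymptotics are purely exponential by Hypothesis~\ref{hyp: front existence}, contradicting the assumed one-dimensionality (minimality) of that kernel. In other words, the nondegeneracy is not leading-edge algebra but precisely the content of the minimal-kernel assumption. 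Since this pairing is the common denominator in all three of your implicit function theorem steps (pulled family, pushed family, and the identification $\sigma'(0)=-\alpha'(0)$), the argument is incomplete until it is replaced by this Fredholm-based reasoning; the same remark applies to your companion claim $\langle\Phi,B_0(u_0^0\chi_+e^{\nu_0\cdot})\rangle=0$, which likewise needs the global input $B_0q_0'=0$ with $q_0'-\nu_0u_0^0\chi_+e^{\nu_0\cdot}$ in the strong space, i.e.\ the first half of Lemma~\ref{l: projections}. With those substitutions, the remainder of your outline goes through.
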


\begin{figure}
 \centering \includegraphics[width=0.9\textwidth]{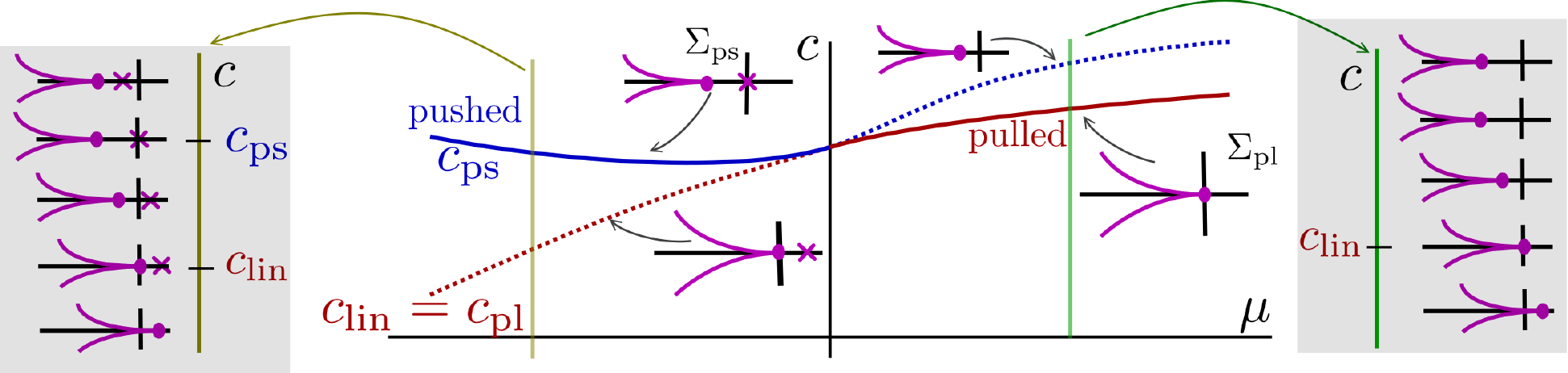}
 \caption{Center: Schematics of the pushed-to-pulled transition in the case $\alpha'(0)>0$: speeds $c_\mathrm{ps/pl}(\mu)$ with a quadratic tangency at $\mu=0$, pushed $c_\mathrm{ps}\geq c_\mathrm{pl}$. Solid lines show ``selected'' fronts, pushed and pulled, dashed lines the continuation of those selected fronts in parameter space. Insets show spectra in exponentially weighted spaces with pulled fronts marginally stable due to essential spectrum and pushed fronts marginally stable due to point spectrum. The continuation of pulled fronts is unstable against point spectrum, the continuation of pushed fronts is strongly stable. Left and Right: spectra of fronts as $c$ is increased for $\mu<0$ (left) and $\mu>0$ (right). Fronts at the linear speed are marginally stable for $\mu>0$ with marginally stable essential spectrum and fronts at the pushed speed are marginally stable for $\mu<0$ with marginally stable point spectrum.}\label{f:schem}
\end{figure}

As stated, the theorem continues the front $q_0$ preserving either the fact that the leading edge has double root asymptotics, or the fact that the front itself has pure exponential asymptotics. The significance of these two families becomes apparent when discussing selection of fronts. In fact, the families $q_\mathrm{pl}$ and  $q_\mathrm{ps}$ should be thought of as the smooth extension of families of pulled and pushed  fronts, respectively. The terminology of pulled and pushed front refers to their selection property, that is, one requires that open classes of initial data that vanish in $x>0$, say, converge to the fronts in a suitable topology. To clarify this interpretation, we inspect stability properties of the fronts identified in Theorem~\ref{t: pulled pushed unfolding}.

Consider therefore  the linearization about a  front $q_*$,
\begin{align}\label{e:lin2}
	B_* = \mathcal{P}(\partial_x) + c M \partial_x + f_u (q_*(\cdot; \mu); \mu) 
\end{align}
We write $B_\mathrm{pl/ps}(\mu)$  when $q_*=q_\mathrm{pl/ps}$ and $c=c_\mathrm{pl/ps}$ from Theorem~\ref{t: pulled pushed unfolding}. 
Recall the decay rate $\nu_\mathrm{dr}(\mu;c)$ associated with the pinched double root $\lambda_\mathrm{dr}(\mu;c)$ and define 
\begin{equation}
 \mathcal{L}_\mathrm{pl/ps}(\mu) = \omega_{0, -\nu_\mathrm{dr}(\mu;c_\mathrm{pl/ps}(\mu))} B_\mathrm{pl/ps}(\mu) \omega_{0, -\nu_\mathrm{dr}(\mu;c_\mathrm{pl/ps}))}
\end{equation}
the linearization conjugated with the critical exponential weight, in which the essential spectrum is pushed as far left as possible. 

We focus on the spectrum $\Sigma_\mathrm{pl/ps}(\mu)$ of $\mathcal{L}_\mathrm{pl/ps}(\mu)$ in a small ball centered at the origin $|\lambda|\leq \delta$. We distinguish between point spectrum $\Sigma^\mathrm{pt}_\mathrm{pl/ps}(\mu)$, where $\mathcal{L}_\mathrm{pl/ps}(\mu)-\lambda M$ is Fredholm index 0 but not invertible, and the essential spectrum $\Sigma^\mathrm{ess}_\mathrm{pl/ps}(\mu)$, where 
 $\mathcal{L}_\mathrm{pl/ps}(\mu)-\lambda M$ is not Fredholm of index 0.

\begin{thm}[pulled-to-pushed unfolding --- stability]\label{t: pulled pushed stability} Consider the spectra $\Sigma_\mathrm{pl/ps}(\mu)$  of $\mathcal{L}_\mathrm{pl/ps}(\mu)$ in a $\delta$-neighborhood of the origin. Recall the leading-order linear term $\alpha(\mu)$ in the leading edge of $q_\mathrm{pl}$ and assume  $\alpha'(0) \neq 0$. Then, for $\delta>0$ and $|\mu|$ sufficiently small, we have
\begin{align*}
 \Re\Sigma^\mathrm{ess}_\mathrm{pl}(\mu)\leq 0, \quad 0\in \Sigma^\mathrm{ess}_\mathrm{pl}(\mu), \qquad \qquad \text{and} \quad 
 \Re\Sigma^\mathrm{ess}_\mathrm{ps}(\mu)< 0 \text{ for } |\mu|\neq 0.
\end{align*}
Moreover, 
\begin{itemize}
 \item for $\alpha'(0)\mu>0$, we have $\Sigma^\mathrm{pt}_\mathrm{pl}(\mu)=\emptyset$, $\Sigma^\mathrm{pt}_\mathrm{ps}(\mu)=\emptyset$;
 \item for $\alpha'(0)\mu<0$, we have $\Sigma^\mathrm{pt}_\mathrm{pl}(\mu)\ni \lambda(\mu)>0$, $\Sigma^\mathrm{pt}_\mathrm{ps}(\mu)=\{0\}$.
\end{itemize}
In particular, fronts $q_\mathrm{ps}$ are marginally stable precisely when  $\alpha'(0)\mu<0$ and fronts  $q_\mathrm{pl}$ are marginally stable precisely when $\alpha'(0)\mu>0$; fronts $q_\mathrm{pl}$ are unstable when the leading edge behavior is non-monotone. 
\end{thm}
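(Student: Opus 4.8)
The plan is to treat the essential and point spectrum separately, since by definition they are governed by different mechanisms. The essential spectrum of $\mcl_\mathrm{pl/ps}(\mu)$ in the critically weighted space is dictated entirely by the constant-coefficient limit at $x\to+\infty$ — the leading-edge linearization at $u=0$ — together with the weight $-\nu_\mathrm{dr}(\mu;c_\mathrm{pl/ps})$. Standard Fredholm-boundary theory for differential operators on $\R$ identifies the Fredholm boundaries with curves $\lambda$ at which a spatial root of the dispersion relation crosses the line $\Re\nu=\nu_\mathrm{dr}$; after conjugation by the optimal weight the rightmost such point is the pinched double root $\lambda_\mathrm{dr}(\mu;c_\mathrm{pl/ps}(\mu))$ of Lemma~\ref{l: robustness sPDR}, all other essential spectrum lying strictly further left by the minimality assumption in Hypothesis~\ref{hyp: fredholm properties}. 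I would first record this reduction and then evaluate $\lambda_\mathrm{dr}$ along the two speed branches.

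For the pulled branch, $c_\mathrm{pl}(\mu)=c_\mathrm{lin}(\mu)$ was constructed in Lemma~\ref{l: robustness sPDR} precisely so that $\lambda_\mathrm{dr}(\mu;c_\mathrm{lin}(\mu))=0$, which immediately gives $0\in\Sigma^\mathrm{ess}_\mathrm{pl}(\mu)$ and $\Re\Sigma^\mathrm{ess}_\mathrm{pl}(\mu)\le 0$. For the pushed branch I would compute $\partial_c\lambda_\mathrm{dr}$ at the double root: differentiating the defining relations $d_c=\partial_\nu d_c=0$ and using $d_c(\lambda,\nu;\mu)=d_0(\lambda-c\nu,\nu;\mu)$, so that $\partial_c d_c=-\nu\,\partial_\lambda d_c$ while $\partial_\nu d_c=0$ at the double root, yields $\partial_c\lambda_\mathrm{dr}=\nu_\mathrm{dr}$ and hence $\partial_c\lambda_\mathrm{dr}(0;c_0)=\nu_0<0$. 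Combined with the speed expansion $c_\mathrm{ps}(\mu)=c_\mathrm{lin}(\mu)+c_2\mu^2+\mathrm{O}(\mu^3)$, $c_2>0$, from Theorem~\ref{t: pulled pushed unfolding}, this gives $\lambda_\mathrm{dr}(\mu;c_\mathrm{ps}(\mu))=\nu_0 c_2\mu^2+\mathrm{O}(\mu^3)<0$ for $\mu\ne0$, so that $\Re\Sigma^\mathrm{ess}_\mathrm{ps}(\mu)<0$ there.

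For the point spectrum, the main work is to build a reduced eigenvalue equation near the origin that resolves the square-root branching induced by the double root. I would write $(\mcl_\mathrm{pl/ps}(\mu)-\lambda M)u=0$ as a first-order system, introduce the uniformizing variable $\sigma=\sqrt{\lambda-\lambda_\mathrm{dr}}$ so that the two colliding leading-edge spatial modes split analytically as $\nu_0\pm\gamma\sigma$ (with $\gamma$ determined by $\partial_{\nu\nu}d_{c_0}$ and $\partial_\lambda d_{c_0}$, real by Hypothesis~\ref{h:sdr}), and perform a far-field/core decomposition matching the solution continued from the $u_-$ side to the strong-stable leading-edge mode. Since the kernel and cokernel at $(\lambda,\mu)=(0,0)$ are one-dimensional (Hypothesis~\ref{hyp: fredholm properties}), Lyapunov--Schmidt reduction against the cokernel $\Phi$ produces a scalar analytic function $E(\sigma,\mu)$ whose zeros with $\Re\sigma>0$ are genuine eigenvalues and whose zeros with $\Re\sigma<0$ are spurious resonances on the second Riemann sheet. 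The essential inputs are the normalization $E(0,0)=0$ (the profile $q_0$ of Hypothesis~\ref{hyp: front existence} has purely exponential tail, so $q_0'$ lies in the strong-stable mode at $\lambda=0$), the branch-point coefficient $\partial_\sigma E(0,0)\ne0$, and — the crucial computation — $\partial_\mu E(0,0)\propto\alpha'(0)$, reflecting that turning on $\mu$ reintroduces the term $x e^{\nu_0 x}$ in the leading edge with coefficient $\alpha(\mu)=\alpha'(0)\mu+\mathrm{O}(\mu^2)$.

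Solving $E(\sigma,\mu)=0$ by the implicit function theorem then gives $\sigma(\mu)=K\alpha'(0)\mu+\mathrm{O}(\mu^2)$ with $\lambda(\mu)=\lambda_\mathrm{dr}+\sigma(\mu)^2$, and the decisive point is the sign of $K$, which I would extract from the cokernel pairing together with the double-root data; the claim is $K<0$, so that $\Re\sigma(\mu)>0$ exactly when $\alpha'(0)\mu<0$. Reading off the branches: on the pulled branch ($\lambda_\mathrm{dr}=0$) this produces a genuine eigenvalue $\lambda(\mu)=\sigma(\mu)^2>0$ when $\alpha'(0)\mu<0$ and only a second-sheet resonance, hence $\Sigma^\mathrm{pt}_\mathrm{pl}=\emptyset$, when $\alpha'(0)\mu>0$; on the pushed branch the translational zero at $\lambda=0$ coming from $q_\mathrm{ps}'$ lies on the principal sheet, giving $\Sigma^\mathrm{pt}_\mathrm{ps}=\{0\}$, exactly when $\alpha'(0)\mu<0$, and passes to the second sheet, giving $\Sigma^\mathrm{pt}_\mathrm{ps}=\emptyset$, when $\alpha'(0)\mu>0$. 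The marginal-stability statements are then immediate, and the monotonicity conclusion follows because $\alpha(\mu)$ is the coefficient of the dominant $x e^{\nu_\mathrm{lin}x}$ term in the leading edge of $q_\mathrm{pl}$, so its sign controls whether the profile approaches $0$ monotonically, a sign that flips precisely across the stability boundary $\alpha'(0)\mu=0$. I expect the main obstacle to be exactly the rigorous construction of the analytic function $E(\sigma,\mu)$ across the branch point — making the far-field/core decomposition precise in spaces where $\mcl_\mathrm{pl/ps}(\mu)-\lambda M$ fails to be Fredholm at the critical weight — and, within it, establishing $\partial_\mu E(0,0)\propto\alpha'(0)$ and the sign of $K$, since this is where the hidden monotonicity is encoded and where the normalizations of Hypotheses~\ref{h:sdr}--\ref{hyp: fredholm properties} must be tracked with care.
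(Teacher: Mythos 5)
Your proposal follows essentially the same route as the paper's proof: the essential spectrum is located from the dispersion relation and the motion of the pinched double root with the speed (your computation $\partial_c \lambda_\mathrm{dr} = \nu_\mathrm{dr} < 0$ just makes explicit what the paper infers from $c_\mathrm{ps} > c_\mathrm{lin}$), while the point spectrum is handled by a far-field/core decomposition in the uniformizing variable $\gamma = \sqrt{\lambda}$, Lyapunov--Schmidt reduction against the cokernel to a scalar function $E$ with $E(0,0)=0$, $\partial_\gamma E(0,0) \neq 0$, $\partial_\mu E(0,0) \propto \alpha'(0)$, and marginality of $q_\mathrm{ps}$ decided by whether the translational mode $q_\mathrm{ps}'$ is localized in the critical weight. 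The computations you defer as the main obstacles --- the proportionality $\partial_\mu E(0,0) \propto \alpha'(0)$, the sign $K<0$, and the sign of the decay-rate shift on the pushed branch --- are precisely what the paper supplies via Lemma~\ref{l: projections}, the expansion $\gamma(\mu) = \alpha'(0)\mu/\partial_\gamma \nu_-^\mathrm{pl}(0,0) + \mathrm{O}(\mu^2)$ with $\partial_\gamma \nu_-^\mathrm{pl}(0,0) < 0$, and Proposition~\ref{p: alpha sigma relation}, and the signs you claim agree with the paper's.
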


%
%
%

\begin{remark} 
The fronts $q_\mathrm{ps}$ possess a resonance at $\lambda=0$ when $\alpha'(0)\mu>0$, that is, a pole of the pointwise resolvent when considered on a Riemann surface with branch cut at the double root. A characterization of the significance of fronts with such a resonance at the origin does not appear to be known.
\end{remark}

The \emph{marginal stability conjecture}, often loosely formulated, states that marginal stability in the leading edge implies selection of fronts, that is those fronts attract open sets of initial conditions including functions with support in $\{x<0\}$. Assuming that the marginal stability conjecture holds and absence of spectrum in $\{\Re\lambda\geq0,|\lambda|>\delta\}$, we can therefore predict that $q_\mathrm{pl}$ is \emph{selected} when $\alpha'(0)\mu>0$ and $q_\mathrm{ps}$ is \emph{selected} when $\alpha'(0)\mu<0$. 
We summarize this conclusion in the following ``result''. 

\textbf{Result --- pushed-to-pulled transition.} \emph{Assume Hypotheses~\ref{h:sdr}--\ref{hyp: fredholm properties} and in addition absence of spectrum of $\mathcal{L}_\mathrm{pl/ps}(0)$ in $\{ \lambda \geq 0 \}\setminus \{0\}$. Assume in addition that the marginal stability conjecture holds. Then we have propagation at the linear spreading speed for monotone tails, $\alpha'(0)\mu>0$, with ``selection'' of the pulled front $q_\mathrm{pl}(\mu)$, and selection of the pushed front $q_\mathrm{ps}(\mu)$ with speed $c_\mathrm{ps}(\mu)>c_\mathrm{lin}(\mu)$ for $\alpha'(0)\mu<0$.}

In the case of pushed fronts, the marginal stability conjecture generally can be established with standard methods. The linearization possesses a simple eigenvalue at the origin associated with translations in a weighted space that allows for perturbations that cut off the front tail. For pulled fronts, the marginal stability conjecture was known in systems with comparison principles starting with \cite{Kolmogorov} and only recently established in a conceptual framework, based only on linear information as provided here, albeit only for scalar higher-order parabolic equations \cite{averyscheelselection}. 

\paragraph{Acknowledgements.} The material here is based on work supported by the National Science Foundation, through through the Graduate Research Fellowship Program under Grant No. 00074041 (MA), as well as NSF-DMS-2007759 (MH) and NSF-DMS-1907391 (AS). Any opinions, findings, and conclusions or recommendations
expressed in this material are those of the authors and do not necessarily reflect the views of the
National Science Foundation.

\section{Preliminaries}\label{s: preliminaries}

\subsection{Double root criteria and robustness of double roots}

We start by providing a reformulation of Hypothesis~\ref{h:sdr} without relying on determinants, preparing also for the proof of robustness, Lemma~\ref{l: robustness sPDR}, and tail expansions, Corollary~\ref{c: asymptotic solutions}. Recall the definition of the family of matrices associated with the leading edge, 
\[	A(\lambda,\nu, c; \mu) = \mathcal{P}(\nu) + c \nu M + f_u (0; \mu) - \lambda M. 
\]

\begin{hyp}[Simple double root]\label{hyp: double root}
	There exist $\nu_0 < 0$, $c_0 > 0$, and $u_0^0$ and $u_1^0 \in \R^n$ such that 
	\begin{align}
		A(0,\nu_0, c_0;  0) u_0^0 &= 0, \label{e:g1}\\
		\partial_\nu A(0,\nu_0, c_0; 0) u_0^0 + A(0,\nu_0, c_0; 0) u_1^0 &= 0. \label{e:g2}
	\end{align}
	We let 
	\[A^{00} = A(0,\nu_0, c_0;  0),\  A^{01} = \partial_\nu A(0,\nu_0, c_0;  0), \  A^{10}=\partial_\lambda A(0,\nu_0, c_0;  0), \  A^{02}=\frac{1}{2}\partial_{\nu \nu} A(0,\nu_0, c_0;  0).
	 	\]
 We then assume that  $\ker A^{00} = \spn (u_0^0)$, we let $\ker (A^{00})^T = \spn (e_\mathrm{ad})$, and we assume that
	\begin{align}
		\langle A^{10} u_0^0, e_\mathrm{ad} \rangle \langle A^{02} u_0^0 + A^{01} u_1^0, e_\mathrm{ad} \rangle < 0. \label{e: generic sn assumption}
	\end{align}
\end{hyp}
In particular, $A^{10} u_0^0 = - M u_0^0$ and $A^{02} u_0^0 + A^{01} u_1^0$ are not in the range of $A^{00}$ since both the projections in \eqref{e: generic sn assumption} are nonzero. 

\begin{lemma}\label{l: disp gen evp equivalent}
 Hypothesis~\ref{hyp: double root} and Hypothesis~\ref{h:sdr} are equivalent.
\end{lemma}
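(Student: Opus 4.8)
The plan is to show the equivalence of the determinant-based double root condition (Hypothesis~\ref{h:sdr}) and the eigenvector-based condition (Hypothesis~\ref{hyp: double root}) by relating derivatives of $d_c(\lambda,\nu;\mu) = \det A(\lambda,\nu,c;\mu)$ to the action of $A$ and its derivatives on the kernel and cokernel vectors $u_0^0$ and $e_\mathrm{ad}$. First I would observe that since $\partial_\nu d_{c_0}(0,\nu_0;0) = 0$ and we want to understand the vanishing of the determinant and its first $\nu$-derivative together with the signs of $\partial_{\nu\nu}d$ and $\partial_\lambda d$, the natural tool is Jacobi's formula for the derivative of a determinant, $\partial d = \det(A)\,\mathrm{tr}(A^{-1}\partial A)$, combined with its higher-order and singular analogues when $A$ is not invertible.

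The key technical step is the following standard fact about simple kernels: if $A^{00} = A(0,\nu_0,c_0;0)$ has a one-dimensional kernel spanned by $u_0^0$ and one-dimensional cokernel spanned by $e_\mathrm{ad}$ (so that $0$ is a geometrically and algebraically simple eigenvalue of $A^{00}$ in the appropriate sense, i.e.\ $\langle u_0^0, e_\mathrm{ad}\rangle \neq 0$ after normalization), then $d = \det A^{00} = 0$, and the first derivative of the determinant in any direction $s$ satisfies $\partial_s d = C \langle \partial_s A\, u_0^0, e_\mathrm{ad}\rangle$ for a nonzero constant $C$ depending on the nonzero eigenvalues of $A^{00}$. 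Applying this with $s = \nu$, the condition $\partial_\nu d_{c_0}(0,\nu_0;0) = 0$ becomes equivalent to $\langle A^{01} u_0^0, e_\mathrm{ad}\rangle = 0$, which is precisely the solvability condition allowing one to solve \eqref{e:g2} for $u_1^0$; conversely, the existence of $u_1^0$ solving \eqref{e:g2} is exactly the Fredholm solvability condition $A^{01}u_0^0 \in \mathrm{Rg}(A^{00})$, i.e.\ $\langle A^{01}u_0^0, e_\mathrm{ad}\rangle = 0$. This identifies the double-root condition $d = \partial_\nu d = 0$ with the existence of a Jordan chain $(u_0^0, u_1^0)$, i.e.\ equations \eqref{e:g1}--\eqref{e:g2}.

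Next I would compute the two second-order/mixed quantities. The plan is to show $\partial_\lambda d_{c_0}(0,\nu_0;0) = C \langle A^{10} u_0^0, e_\mathrm{ad}\rangle = -C\langle M u_0^0, e_\mathrm{ad}\rangle$, using $A^{10} = \partial_\lambda A = -M$, and to show $\partial_{\nu\nu} d_{c_0}(0,\nu_0;0) = 2C \langle A^{02} u_0^0 + A^{01} u_1^0, e_\mathrm{ad}\rangle$ with the \emph{same} nonzero constant $C$. The second derivative is the delicate one: when differentiating $d$ twice in $\nu$ at a point where $A^{00}$ is singular, the generic Jacobi formula picks up a term from differentiating $A^{-1}$, which on the singular subspace is resolved precisely by the Jordan vector $u_1^0$ defined through \eqref{e:g2}. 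Carrying this out carefully — e.g.\ by working with a rank-one reduction or a Lyapunov--Schmidt / Schur-complement decomposition of $A$ near $(\lambda,\nu)=(0,\nu_0)$, writing $d(\lambda,\nu) = \det A$ as (nonzero factor)$\times$(scalar reduced determinant $g(\lambda,\nu)$) and tracking the Taylor expansion of $g$ — yields $\partial_{\nu\nu} d = 2C\langle A^{02}u_0^0 + A^{01}u_1^0, e_\mathrm{ad}\rangle$. Once both expressions carry the same factor $C \neq 0$, the product $\partial_{\nu\nu}d \cdot \partial_\lambda d$ has the same sign as $\langle A^{10}u_0^0, e_\mathrm{ad}\rangle\langle A^{02}u_0^0 + A^{01}u_1^0, e_\mathrm{ad}\rangle$, so the sign conditions in \eqref{e: generic sn assumption} and in Hypothesis~\ref{h:sdr} coincide, and both are automatically nonzero exactly when neither projection vanishes.

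The main obstacle I expect is the bookkeeping for the second $\nu$-derivative: because $A^{00}$ is singular, one cannot naively use $\partial_{\nu\nu}\det A = \det A\cdot[\mathrm{tr}(A^{-1}A'')\! - \!\mathrm{tr}(A^{-1}A'A^{-1}A') + (\mathrm{tr}(A^{-1}A'))^2]$ at the singular point, and the cleanest route is to reduce to a scalar function $g$ via the Schur complement and verify that the Jordan-chain vector $u_1^0$ is exactly what appears in $\partial_{\nu\nu}g(0,\nu_0)$. The secondary point to confirm is that the two factors genuinely share the constant $C$ (rather than differing constants), which is what makes the sign conditions line up; this follows because $C$ is intrinsic to the reduction and independent of the direction of differentiation. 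Establishing these two identities with a common nonzero prefactor completes the equivalence in both directions.
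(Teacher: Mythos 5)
Your proposal is correct and is essentially the paper's own argument: the paper likewise treats the correspondence between root multiplicities of $d$ and Jordan chain lengths as a known fact (citing \cite{gohberg2006invariant}, where you instead sketch the short adjugate/Jacobi-formula proof), and it establishes the sign equivalence by exactly your Lyapunov--Schmidt/Schur-complement reduction, matching the expansion of $d$ near $(0,\nu_0)$ with coefficients $\partial_\lambda d$, $\tfrac12 \partial_{\nu\nu}d$ against the reduced kernel equation with coefficients proportional to $\langle A^{10}u_0^0,e_\mathrm{ad}\rangle$ and $\langle A^{02}u_0^0+A^{01}u_1^0,e_\mathrm{ad}\rangle$; your factorization $d=E\cdot g$ with $E(0,\nu_0)\neq 0$ is precisely what makes the two expansions proportional with a common nonzero prefactor, so the sign conditions coincide. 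One parenthetical remark in your write-up is false but harmless: one-dimensionality of the kernel and cokernel of $A^{00}$ does not imply $\langle u_0^0,e_\mathrm{ad}\rangle\neq 0$ (that would be algebraic simplicity of $0$ as an ordinary eigenvalue of the matrix $A^{00}$, which is neither implied by the hypotheses nor needed anywhere), and your constant $C$ is nonzero simply because the adjugate of a matrix of rank $n-1$ is a nonzero rank-one matrix proportional to $u_0^0\, e_\mathrm{ad}^T$.
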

\begin{proof}
    Both formulations express algebraic multiplicities of the eigenvalue zero when $A$ is considered as a matrix pencil in $\lambda$ with $\nu=0$ or a matrix pencil in $\nu$ with $\lambda=0$. Hypothesis~\ref{h:sdr} expresses these multiplicities as orders of the roots of the determinant and Hypothesis~\ref{hyp: double root} as lengths of Jordan chains. Both characterizations agree; see for instance \cite{gohberg2006invariant}. It remains to show that $\partial_\lambda d\partial_{\nu\nu}d<0$ is equivalent to \eqref{e: generic sn assumption}. This in turn follows from a direct computation using Lyapunov-Schmidt reduction to find all values of $\lambda,\nu$ where $A$ has a kernel. The determinant criterion gives this via the expansion $\partial_\lambda d \cdot \lambda + \frac{1}{2}\partial_{\nu\nu}d\cdot \nu^2+\mathrm{O}(\lambda^2,\lambda(\nu-\nu_0),(\nu-\nu_0)^3)=0$. Directly from the matrix kernel, we find 
    $\langle A^{10} u_0^0, e_\mathrm{ad} \rangle \lambda + \frac{1}{2} \langle A^{02} u_0^0 + A^{01} u_1^0, e_\mathrm{ad} \rangle \nu^2 + \mathrm{O}(\lambda^2,\lambda(\nu-\nu_0),(\nu-\nu_0)^3)=0$, establishing our claim. 
\end{proof}
%

\begin{remark} The sign in \eqref{e: generic sn assumption} implies an effective positive diffusivity, when interpreting the expansion $\lambda-(\nu-\nu_0)^2$ of the dispersion relation as stemming from a diffusion equation with exponential weight $\nu_0$. There appear to be no known examples where the most unstable double root has a negative effective diffusivity in this sense. Positive effective diffusivity also implies stability of the absolute spectrum, which governs stability in large bounded domains \cite{AbsoluteSpecArndBjorn}, in a neighborhood of the double root. We caution however that, conversely, stability of double roots and positive effective diffusivity does not imply stability of absolute spectra and refer to \cite{FayeHolzerScheelSiemer} for analysis and a discussion of invasion phenomena in this context. 
\end{remark}

We next establish robustness of double roots, using the formulation from Hypothesis~\ref{hyp: double root}. As an additional benefit, using this approach also provides us with a continuation of the eigenvectors $u_0$ and $u_1$. 

\begin{proof}[Proof of Lemma~\ref{l: robustness sPDR}]
    We only prove the second claim, pinning the double root at the origin for a suitable speed $c_\mathrm{lin}$. The first claim is easier to establish, solving for instance $d_c(\lambda,\nu;\mu)=\partial_\nu d_c(\lambda,\nu;\mu)=0$ with the implicit function theorem for $(\lambda,\nu)$. 
    
	Define $F: \R^n \times \R^n \times \R^2 \times \R \to \R^{2n+2}$ by
	\begin{align}\label{e:dismatrix}
		F(u_0, u_1, c, \nu; \mu) = \begin{pmatrix}
			A(0,\nu, c; \mu) u_0 \\
			\partial_\nu A(0,\nu, c; \mu) u_0 + A(0,\nu, c; \mu)u_1 \\
			\langle u_0 - u_0^0, u_0^0 \rangle \\
			\langle u_1 - u_1^0, u_0^0 \rangle 
		\end{pmatrix}
	\end{align}
	Note that by Hypothesis~\ref{hyp: double root}, $F(u_0^0, u_1^0, c_0, \nu_0; 0) = 0$. Linearizing about this solution, we find
	\begin{align*}
		D_{(u_0, u_1, c, \nu)} F(u_0^0, u_1^0, c_0, \nu_0; 0) = \begin{pmatrix}
			A^{00} & 0 & \partial_c A(\star) u_0^0 & A^{01} u_0^0 \\
			A^{01} & A^{00} & \partial_c \partial_\nu A(\star) u_0^0 + \partial_c A(\star) u_1^0 & 2 A^{02}u_0^0 + A^{01} u_1^0 \\
			\langle \cdot, u_0^0 \rangle & 0 & 0 & 0 \\
			0 & \langle \cdot, u_0^0 \rangle & 0 & 0
		\end{pmatrix},
	\end{align*}
	where $(\star) = (u_0^0, u_1^0, c_0, \nu_0; 0)$. Note that $\partial_c A (\star) = \nu_0 M$, $\partial_c \partial_\nu A(\star) = M$ and by Hypothesis~\ref{hyp: double root} $A^{01} u_0^0 = - A^{00} u_1^0$, so that this expression simplifies to 
	\begin{align*}
		D_{(u_0, u_1, c, \nu)} F(u_0^0, u_1^0, c_0, \nu_0; 0) = \begin{pmatrix} A^{00} & 0 & \nu_0 M u_0^0 & - A^{00}u_1^0 \\
			A^{01} & A^{00} & M u_0^0 + \nu_0 M u_1^0 & 2 A^{02}u_0^0 + A^{01}u_1^0 \\
			\langle \cdot, u_0^0 \rangle & 0 & 0 & 0 \\
			0 & \langle \cdot, u_0^0 \rangle & 0 & 0 
		\end{pmatrix}. 
	\end{align*}
	Assume $(w_0, w_1, \tilde{c}, \tilde{\nu}) \in \ker D_{(u_0, u_1, c, \nu)} F(u_0^0, u_1^0, c_0, \nu_0; 0)$, so that $(w_0, w_1, \tilde{c}, \tilde{\nu})$ satisfy the following system
	\begin{align*}
		A^{00} w_0 + \tilde{c} \nu_0 M u_0^0 - \tilde{\nu} A^{00}u_1^0 &= 0 \\
		A^{01} w_0 + A^{00} w_1 + \tilde{c} (Mu_0^0 + \nu_0 M u_1^0) + \tilde{\nu} (2 A^{02} u_0^0 + A^{01} u_1^0) &= 0, \\
		\langle w_0, u_0^0 \rangle &= 0, \\
		\langle w_1, u_0^0 \rangle &= 0. 
	\end{align*}
	In particular, from the first equation we have $A^{00} (w_0 - \tilde{\nu} u_1^0) = - \tilde{c} \nu_0 M u_0^0$. However, by Hypothesis~\ref{hyp: double root}, $M u_0^0 \notin \rg A^{00}$, so we must have $\tilde{c} = 0$, from which we again use Hypothesis~\ref{hyp: double root} to conclude that $w_0 - \tilde{\nu} u_1^0 = \alpha u_0^0$ for some $\alpha \in \R$. We may then rewrite the second equation as 
	\begin{align*}
		A^{01} (\alpha u_0^0) + A^{00} w_1 = - 2\tilde{\nu} ( A^{02} u_0^0 + A^{01} u_1^0 ). 
	\end{align*}
	By Hypothesis~\ref{hyp: double root}, we have $A^{01} u_0^0 = - A^{00} u_1^0$, and hence 
	\begin{align*}
		A^{00} (w_1 - \alpha u_1^0) = - 2 \tilde{\nu} ( A^{02} u_0^0 + A^{01} u_1^0 ). 
	\end{align*}
	Since by Hypothesis~\ref{hyp: double root}, the right hand side is not in the range of $A^{00}$, we conclude that $\tilde{\nu} = 0$, from which it follows that $w_1 - \alpha u_1^0 = \beta u_0^0$ for some $\beta \in \R$, and $w_0 = \alpha u_0^0$. However, since $\langle w_0, u_0^0 \rangle = 0$, we conclude $\alpha = 0$, and then we have $w_1 = \beta u_0^0$, with $\langle w_1, u_0^0 \rangle = 0$, and so $\beta = 0$ as well. Hence the kernel of the linearization is trivial, and the result follows from the implicit function theorem. 
\end{proof}

\subsection{Projections of tail corrections}
Recall the definition of the linearization $\mathcal{L}_0$ after conjugation with exponential weights, characterized in Hypothesis~\ref{hyp: fredholm properties} and the definition of $\varphi$, there, as a basis of the cokernel. Let $\chi_+$ be a smooth, positive cutoff function satisfying 
\begin{align*}
	\chi_+ (x) = \begin{cases} 1, & x \geq 3, \\
		0, & x \leq 2. 
	\end{cases}
\end{align*}

\begin{lemma}[Projections]\label{l: projections}
	We have 
	\begin{align}
		\langle \mcl_0 (u_0^0 \chi_+), \phi \rangle = 0,  \label{e: projection 1}
	\end{align}
	while 
	\begin{align}
		\langle \mcl_0 [(u_0^0 x + u_1^0)\chi_+], \phi \rangle \neq 0. \label{e: projection 2}
	\end{align}
\end{lemma}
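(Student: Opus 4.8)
The plan is to convert both identities into statements about membership in $\rg\,\mathcal{L}_0$. Since $\mathcal{L}_0$ is Fredholm of index $-1$ with trivial kernel (Hypothesis~\ref{hyp: fredholm properties}), its range is closed and equals the annihilator of the one-dimensional cokernel, so for any $g$ in the target space $\langle g, \phi \rangle = 0$ if and only if $g \in \rg\,\mathcal{L}_0$. A preliminary check is that $\mathcal{L}_0(u_0^0 \chi_+)$ and $\mathcal{L}_0[(u_0^0 x + u_1^0)\chi_+]$ genuinely lie in the target space: the asymptotic constant-coefficient leading-edge operator annihilates $e^{\nu_0 x} u_0^0$ and $e^{\nu_0 x}(u_0^0 x + u_1^0)$ exactly (these are the solutions from Corollary~\ref{c: asymptotic solutions}), so applying $B_0$ to $\omega_0^{-1}$ times these functions leaves only the decaying zeroth-order difference $(f_u(q_0;0) - f_u(0;0)) = \mathrm{O}(e^{\nu_0 x})$; multiplying back by $\omega_0$ yields $\mathrm{O}(e^{\nu_0 x}\,\mathrm{poly})$, which lies in $L^2_{\mathrm{exp},0,\eps}$. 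The guiding dichotomy is that the pure-exponential direction $u_0^0 e^{\nu_0 x}$ is exactly the one realized by the kernel element $q_0'$, whereas the linear-in-$x$ direction $(u_0^0 x + u_1^0)e^{\nu_0 x}$ is the obstructed one forbidden by the codimension-one Hypothesis~\ref{hyp: front existence}.

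For \eqref{e: projection 1} I would use the kernel directly. The function $\omega_0 q_0'$ is a formal kernel element of $\mathcal{L}_0$, in the sense that $\mathcal{L}_0(\omega_0 q_0') = \omega_0 B_0 q_0' = 0$, even though it fails to lie in the domain. Using $q_0(x) = u_0(0) e^{\nu_0 x} + \mathrm{O}(e^{(\nu_0 - \eta)x})$ with $u_0(0) = u_0^0$, differentiation gives $\omega_0 q_0' = e^{-\nu_0 x} q_0' \to \nu_0 u_0^0$ as $x \to \infty$. Hence $\omega_0 q_0' = \nu_0\, u_0^0 \chi_+ + r$, where the remainder $r$ decays like $\mathrm{O}(e^{-\eta x})$ at $+\infty$ and behaves like $q_0'$ at $-\infty$, so $r \in H^{2m}_{\mathrm{exp},0,\eps}$ lies in the domain. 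Applying $\mathcal{L}_0$ and using $\mathcal{L}_0(\omega_0 q_0') = 0$ gives $\mathcal{L}_0(u_0^0 \chi_+) = -\tfrac{1}{\nu_0}\mathcal{L}_0 r \in \rg\,\mathcal{L}_0$, which is \eqref{e: projection 1}.

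For \eqref{e: projection 2} I would argue by contradiction. If the pairing vanished, then $\mathcal{L}_0[(u_0^0 x + u_1^0)\chi_+] \in \rg\,\mathcal{L}_0$, so there is a domain element $v$ with $\mathcal{L}_0\big((u_0^0 x + u_1^0)\chi_+ - v\big) = 0$ as a function. Undoing the conjugation, $Q := \omega_0^{-1}\big((u_0^0 x + u_1^0)\chi_+ - v\big)$ satisfies $B_0 Q = 0$, and one checks $Q \in H^{2m}_{\mathrm{exp},0,-\nu_0 - \eps}$ with leading-edge behavior $Q = e^{\nu_0 x}(u_0^0 x + u_1^0) + \mathrm{O}(e^{(\nu_0 - \eps)x})$, since $\omega_0^{-1} v$ decays strictly faster than $e^{\nu_0 x}$ and cannot contribute to the $x e^{\nu_0 x}$ term. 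Thus $Q$ is a kernel element carrying a nonzero $x e^{\nu_0 x} u_0^0$ coefficient. But by Hypothesis~\ref{hyp: fredholm properties} the kernel of $B_0$ on $H^{2m}_{\mathrm{exp},0,-\nu_0-\eps}$ is spanned by $q_0'$, whose leading edge is the pure exponential $\nu_0 u_0^0 e^{\nu_0 x}$ with no $x e^{\nu_0 x}$ term — precisely the codimension-one content of Hypothesis~\ref{hyp: front existence}. Matching the $x e^{\nu_0 x}$ coefficients forces $1 = 0$, a contradiction, establishing \eqref{e: projection 2}.

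The main obstacle is making the contradiction in \eqref{e: projection 2} airtight: one must verify that $Q$ lands in $H^{2m}_{\mathrm{exp},0,-\nu_0-\eps}$ and, crucially, that subtracting the domain element $v$ cannot alter the leading $x e^{\nu_0 x}$ coefficient, which rests on the strict exponential decay gap between $\omega_0^{-1} v$ and $e^{\nu_0 x}$ together with the structure of the leading-edge fundamental solutions. The remaining points — the identification $u_0(0) = u_0^0$ and the decay estimates placing $\mathcal{L}_0 w$ in $L^2_{\mathrm{exp},0,\eps}$ — are routine. I note that \eqref{e: projection 2} could instead be obtained by an explicit Lagrange/Green boundary-form computation at $+\infty$, reducing the pairing to the symbol bracket $\langle A^{02} u_0^0 + A^{01} u_1^0, e_\mathrm{ad} \rangle \neq 0$ from Hypothesis~\ref{hyp: double root}; this is more explicit (and connects to the constant $c_2$) but requires pinning down the adjoint solution $\omega_0 \phi \in \ker B_0^*$ and its Jordan structure, which the Fredholm argument sidesteps.
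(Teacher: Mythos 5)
Your proof is correct and takes essentially the same route as the paper: \eqref{e: projection 1} follows from translational invariance ($B_0 q_0'=0$) together with the pure-exponential tail of $q_0$ (your remainder $r$ is exactly the paper's core function $w = \omega_0 q_0' - \nu_0 u_0^0 \chi_+$), and \eqref{e: projection 2} follows by producing, from a hypothetical vanishing of the pairing, a kernel element of $B_0$ in the weak space carrying a nonzero $x e^{\nu_0 x}$ term, contradicting the minimality of the kernel in Hypothesis~\ref{hyp: fredholm properties}. The only difference is packaging: the paper phrases both steps as solvability conditions for a range-projected far-field/core system solved via the implicit function theorem, whereas you invoke the closed-range/annihilator characterization of the Fredholm operator $\mcl_0$ directly --- the inputs and logic are identical.
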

\begin{proof}
	First we prove \eqref{e: projection 1}. We start by rewriting the equation $\mcl_0 u = 0$ with the far-field/core ansatz $u = w + \beta u_0^0 \chi_+$, where $w \in L^2_{\mathrm{exp}, 0, \eta}$ for $\eta$ small. We then let $P$ be the orthogonal projection onto the range of $\mcl_0$ in $L^2_{\mathrm{\exp}, 0, \eta}$, and decompose the resulting equation for $(w, \beta)$ as 
	\begin{align}
		\begin{cases}
			P \mcl_0 (w+\beta u^{0}_0 \chi_+) & = 0, \\
			\langle \mcl_0 (w + \beta u^0_0 \chi_+), \phi \rangle &= 0. 
		\end{cases} \label{e: projection 1 system}
	\end{align} 
	 Rewriting the first equation as $P\mcl_0 w = - \beta P\mcl_0 (u_0^0 \chi_+)$, and exploiting that $-\beta P\mcl_0 (u_0^0 \chi_+) \in L^2_{\mathrm{exp}, 0, \eta} (\R)$ and that $P \mcl_0 : H^{2m}_{\mathrm{exp}, 0, \eta} (\R)\subset L^2_{\mathrm{\exp}, 0, \eta} \to L^2_{\mathrm{\exp}, 0, \eta}$ is invertible by construction, we may solve the first equation for $w = w(\beta)$. The full system therefore has a solution $(w(\beta), \beta)$ if and only if $\langle \mcl_0 (w + \beta u^0_0 \chi_+), \phi \rangle = 0$. Since $w$ is exponentially localized and $\phi$ is in the kernel of $\mcl_0^*$, we have $\langle \mcl_0 w, \phi \rangle = \langle w, \mcl_0^* \phi \rangle = 0$. Hence the system \eqref{e: projection 1 system} has a solution $(w, \beta)$ if and only if $\beta \langle \mcl_0 (u^0_0 \chi_+), \phi \rangle= 0$. However, Hypothesis~\ref{hyp: front existence} gives us a solution to this equation: by translational invariance of the original equation, we have $\mcl_0 (\omega_0 q_0') = 0$. Defining $\beta = \nu_0$ and $w = \omega_0 q_0' - \nu_0 u^0_0 \chi_+$ then gives a solution to \eqref{e: projection 1 system}. Since $\beta = \nu_0 \neq 0$, we conclude that $\langle \mcl (u_0^0 \chi_+), \phi \rangle = 0$, as desired. 
	
	To prove \eqref{e: projection 2}, we modify the far-field core ansatz to incorporate the linearly growing solution captured in Corollary~\ref{c: asymptotic solutions}, writing 
	\begin{align}
		\mcl_0 [w + \alpha (u_0^0 x + u_1^0) \chi_+] = 0. \label{e: projection 2 far-field core}
	\end{align}
	Corollary~\ref{c: asymptotic solutions} guarantees that the result of the left hand side is in $L^2_{\exp, 0, \eta}$, so we again decompose this equation as 
	\begin{align}
		\begin{cases}
			P \mcl_0 [w + \alpha (u_0^0 x + u_1^0) \chi_+ ] &= 0, \\
			\langle \mcl_0 [w + \alpha (u_0^0 x + u_1^0)\chi_+], \phi \rangle & = 0. 
		\end{cases} \label{e: projection 2 system}
	\end{align}
	At $\alpha = 0$, the first equation has the trivial solution $w = 0$. The linearization in $w$ at this solution is $P \mcl_0$, which is invertible on $L^2_{\mathrm{\exp}, 0, \eta}$ by construction, so by the implicit function theorem we find a solution $w(\alpha)$ for $\alpha$ small. Linearity in $\alpha$ and $w$ together with uniqueness of the solution found from the implicit function theorem implies that we may write this solution as $w(\alpha) = \alpha \tilde{w}$ for some $\tilde{w} \in L^2_{\mathrm{\exp}, 0, \eta}$. We may then insert this into the second equation, and find that the system \eqref{e: projection 2 system} has a solution if and only if 
	\begin{align*}
		\alpha \langle \mcl_0 [\tilde{w} + (u_0^0 x + u_1^0) \chi_+], \phi \rangle = 0. 
	\end{align*}
	Again, since $\tilde{w} \in L^2_{\mathrm{\exp}, 0, \eta}$, we have $\langle \mcl_0 \tilde{w}, \phi \rangle = \langle \tilde{w}, \mcl_0^* \phi \rangle = 0$. Hence \eqref{e: projection 2 system} has a solution if and only if $\alpha \langle \mcl_0 [(u_0^0 x + u_1^0) \chi_+], \phi \rangle = 0$. 
	
	We claim that \eqref{e: projection 2 system} has no nontrivial solutions by Hypotheses~\ref{hyp: front existence} and~\ref{hyp: fredholm properties}. Any nontrivial solution would give rise to a solution to \eqref{e: projection 2 far-field core} which is either linearly growing at $+ \infty$ (if $\alpha \neq 0$) or exponentially localized (if $\alpha = 0$). Such a solution would be a solution to $\mcl_0 u = 0$, which is in $L^2_{\mathrm{exp}, 0, -\eta}$, which is linearly independent from $\omega_0 q_*'$, which is excluded by Hypothesis~\ref{hyp: fredholm properties}. Hence \eqref{e: projection 2 system} has no nontrivial solutions. On the other hand, if we had $\langle \mcl_0 [(u_0^0 x + u_1^0) \chi_+], \phi \rangle = 0$, we would obtain a family of solutions of \eqref{e: projection 2 system} for $\alpha \neq 0$, small. Hence we must have $ \langle \mcl_0 [(u_0^0 x + u_1^0) \chi_+], \phi \rangle \neq 0$, as desired. 
\end{proof}

\subsection{Expansions of spatial eigenvalues and eigenspaces}

Fixing $\lambda=0$, we have an eigenvalue of multiplicity 2 in $\nu$ at the origin and an associated Jordan block. Varying $\lambda$, this double eigenvalue splits and eigenvalues and eigenspaces need to be carefully expanded after passing to a Riemann surface. 
\begin{lemma}[Saddle node of eigenspaces]\label{l: saddle node}
	Fix $\mu$ small, and let $\gamma = \sqrt{\lambda}$ with branch cut along the negative real axis. The equation $A(\lambda, \nu, c_\mathrm{lin}(\mu); \mu) u = 0$ has precisely two solutions $(\nu^\mathrm{pl}_\pm(\mu, \gamma), u^\infty_\pm (\mu, \gamma))$ (up to a constant multiple of the $u$ component) for $\nu$ close to $\nu_0$ and $\lambda$ close to zero, with expansions
	\begin{align}
		\nu^\mathrm{pl}_\pm (\mu, \gamma) &= \nu_\mathrm{lin}(\mu) \pm \sqrt{-d_{10}d_{02}^{-1}} \gamma + \mathrm{O}(\gamma^2), \\
		u^\infty_\pm (\mu, \gamma) &= u_0(\mu) \pm \sqrt{-d_{10}d_{02}^{-1}} \left(\langle u_0, u_1 \rangle u_0 + u_1 \right) \gamma + \mathrm{O}(\gamma^2),
	\end{align}
	with remainder terms uniformly small in $\mu$, and $d_{10},d_{02}$ as in \eqref{e:dij}.
\end{lemma}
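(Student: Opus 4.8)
The plan is to solve the matrix equation $A(\lambda,\nu,c_\mathrm{lin}(\mu);\mu)u=0$ directly by a Lyapunov--Schmidt reduction at the continued double root, which delivers the eigenvector expansion together with the expansion of $\nu$, rather than first extracting roots of $\det A=d$ and computing eigenvectors separately. From Lemma~\ref{l: robustness sPDR} I have smooth continuations $u_0(\mu),u_1(\mu)$ of the Jordan chain \eqref{e:g1}--\eqref{e:g2}, and, continuing $\ker (A^{00})^T=\spn(e_\mathrm{ad})$, a smooth vector $e_\mathrm{ad}(\mu)$ spanning the one-dimensional cokernel of $A^{00}(\mu):=A(0,\nu_\mathrm{lin}(\mu),c_\mathrm{lin}(\mu);\mu)$. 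Writing $\tilde\nu=\nu-\nu_\mathrm{lin}(\mu)$ and using $\partial_\lambda A=-M$, I expand
\[
 A(\lambda,\nu_\mathrm{lin}(\mu)+\tilde\nu,c_\mathrm{lin}(\mu);\mu)=A^{00}(\mu)+\tilde\nu A^{01}(\mu)+\tilde\nu^2 A^{02}(\mu)-\lambda M+\mathrm{O}(\tilde\nu^3,\lambda\tilde\nu,\lambda^2).
\]
Fixing the $u_0(\mu)$-component of the sought eigenvector, I set $u=u_0(\mu)+w$ and split $Au=0$ with the orthogonal projection $Q$ onto $\rg A^{00}(\mu)=e_\mathrm{ad}(\mu)^\perp$ into a range equation $QAu=0$ and a scalar bifurcation equation $\langle Au,e_\mathrm{ad}(\mu)\rangle=0$.

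First I would solve the range equation. Since $A^{00}(\mu)$ maps the orthogonal complement of $\spn(u_0(\mu))$ isomorphically onto $\rg A^{00}(\mu)$, and $w=0$ solves it at $(\tilde\nu,\lambda)=0$, the implicit function theorem yields a smooth $w(\tilde\nu,\lambda,\mu)=\mathrm{O}(|\tilde\nu|+|\lambda|)$. The leading balance uses the chain relation $A^{01}(\mu)u_0(\mu)=-A^{00}(\mu)u_1(\mu)\in\rg A^{00}(\mu)$ from \eqref{e:g2}, giving $A^{00}(\mu)w=\tilde\nu A^{00}(\mu)u_1(\mu)+\mathrm{O}(\lambda,\tilde\nu^2)$, so that $w=\tilde\nu\,w_1(\mu)+\mathrm{O}(\lambda,\tilde\nu^2)$ with $w_1(\mu)=u_1(\mu)$ modulo $\ker A^{00}(\mu)$, the $u_0(\mu)$-multiple pinned by the chosen normalization. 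Inserting into the bifurcation equation, the term linear in $\tilde\nu$ drops out because $\langle A^{01}(\mu)u_0(\mu),e_\mathrm{ad}(\mu)\rangle=0$ (the double-root degeneracy $A^{01}u_0\in\rg A^{00}\perp e_\mathrm{ad}$). The reduced function $g(\lambda,\tilde\nu,\mu):=\langle A(u_0+w),e_\mathrm{ad}\rangle$ has the same zero set as $d=\det A$ and agrees with it up to a nonvanishing smooth prefactor (the Schur-complement determinant $\det A^{00}|_{\mathrm{complement}}\neq0$); hence its leading Taylor part is a nonzero multiple of that of $d$, and the bifurcation equation may be taken to be
\[
 d_{02}\,\tilde\nu^2+d_{10}\,\lambda+\mathrm{O}(\tilde\nu^3,\lambda\tilde\nu,\lambda^2)=0,
\]
with $d_{10},d_{02}$ the dispersion-relation coefficients from the statement, nonzero and of opposite sign by Hypothesis~\ref{hyp: double root}, cf.\ \eqref{e: generic sn assumption} and Lemma~\ref{l: disp gen evp equivalent}.

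The core step is the fold resolution. Setting $\gamma=\sqrt\lambda$ with the stated branch cut and blowing up $\tilde\nu=\gamma\beta$, the bifurcation equation becomes $\gamma^2(d_{02}\beta^2+d_{10})+\mathrm{O}(\gamma^3)=0$; dividing by $\gamma^2$ gives
\[
 G(\beta,\gamma,\mu):=d_{02}\beta^2+d_{10}+\mathrm{O}(\gamma)=0.
\]
At $\gamma=0$ this reads $d_{02}\beta^2+d_{10}=0$, and since $d_{10}d_{02}<0$ there are two \emph{simple} real roots $\beta_\pm=\pm\sqrt{-d_{10}d_{02}^{-1}}$ with $\partial_\beta G(\beta_\pm,0,\mu)=2d_{02}\beta_\pm\neq0$. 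The implicit function theorem in $\gamma$ then gives smooth branches $\beta_\pm(\gamma,\mu)$, hence $\nu^\mathrm{pl}_\pm(\mu,\gamma)=\nu_\mathrm{lin}(\mu)\pm\sqrt{-d_{10}d_{02}^{-1}}\,\gamma+\mathrm{O}(\gamma^2)$; feeding $\tilde\nu=\gamma\beta_\pm$ back into $w$ produces $u^\infty_\pm(\mu,\gamma)=u_0(\mu)\pm\sqrt{-d_{10}d_{02}^{-1}}\,w_1(\mu)\,\gamma+\mathrm{O}(\gamma^2)$, matching the stated expansion once $w_1$ is written in the normalization used there. That these are precisely two nearby solutions follows from $\partial_{\nu\nu}d\neq0$: by Weierstrass preparation $d$ has exactly two $\nu$-roots near $\nu_0$ for $\lambda$ near $0$. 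Uniformity of the remainders in $\mu$ is automatic, since $u_0,u_1,e_\mathrm{ad}$, the projection $Q$, and the base point all depend smoothly on $\mu$ by Lemma~\ref{l: robustness sPDR}.

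The main obstacle is conceptual rather than computational: at $\lambda=0$ the root $\nu_0$ is a \emph{non-semisimple} double root (geometric multiplicity one, with the Jordan chain \eqref{e:g1}--\eqref{e:g2}), so $\nu$ is not analytic in $\lambda$ and a naive implicit-function argument in $\lambda$ fails. The resolution is the passage to $\gamma=\sqrt\lambda$ together with the blow-up $\tilde\nu=\gamma\beta$, which turns the fold into a transverse crossing with simple roots; the genericity hypothesis \eqref{e: generic sn assumption}, equivalently $d_{10}d_{02}<0$, is exactly what makes the two branches real rather than a complex-conjugate pair, reflecting the ``saddle-node of eigenspaces'' picture. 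The only care needed is to confirm that the $\mathrm{O}(\gamma)$ remainder in $G$ is genuinely smooth in $\gamma$ (not merely in $\lambda$), which holds because every surviving term is a smooth function of $\tilde\nu=\gamma\beta$ and $\lambda=\gamma^2$.
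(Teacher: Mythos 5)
Your proposal is correct and takes essentially the same route as the paper: a Lyapunov--Schmidt reduction of $A u = 0$ at the continued Jordan block, using the continuation from Lemma~\ref{l: robustness sPDR}, the key cancellation $\langle A^{01} u_0, e_\mathrm{ad} \rangle = 0$, and the genericity condition \eqref{e: generic sn assumption} to arrive at the reduced scalar equation $d_{10}\lambda + d_{02}(\cdot)^2 + \mathrm{h.o.t.} = 0$, which is then resolved by passing to $\gamma = \sqrt{\lambda}$. The only differences are organizational: the paper reduces jointly in $(u,\nu)$ with a quadratic normalization appended and extracts the branches ``with the Newton polygon,'' whereas you keep $\tilde\nu$ as a parameter in the scalar bifurcation equation and justify the branch extraction via the explicit blow-up $\tilde\nu = \gamma\beta$ followed by the implicit function theorem --- an equivalent, and if anything cleaner, phrasing of the same step.
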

\begin{proof}
	We let $A_\mathrm{\mu} (\lambda, \nu) = A( \lambda,\nu, c_\mathrm{lin}(\mu); \mu)$, and define 
	\begin{align}
		F(u, \nu; \lambda) = \begin{pmatrix}
			A_\mu (\lambda, \nu_\mathrm{lin}(\mu) + \nu ) (u_0(\mu) + u) \\
			|u_0+u|^2 -1
		\end{pmatrix}.
	\end{align}
	We look for solutions to $F(u, \nu; \lambda) = 0$. The second equation is a normalization condition: since the first equation is linear in $u_0(\mu) + u$, we need to adjoin with a condition that fixes the constant multiple. From now on, we suppress the dependence on $\mu$. By construction, $F(0, 0; 0) = 0$, and we compute the linearization
	\begin{align}
		D_{(u, \nu)} F(0, 0; 0) = \begin{pmatrix}
			A_\mu^{00} & A_\mu^{01} u_0 \\
			2 \langle \cdot, u_0 \rangle & 0 
		\end{pmatrix},
	\end{align}
	where $A_\mu^{00}=A_\mu(0, \nu_\mathrm{lin}(\mu))$ and $A_\mu^{01}=\partial_\nu A_\mu(0, \nu_\mathrm{lin}(\mu))$.
	From a short computation, we see that $D_{(u,\nu)} F(0,0;0)$ has a one dimensional kernel spanned by
	\begin{align}
		w_0 = (\langle u_1, u_0 \rangle u_0 + u_1, 1 ). 
	\end{align}
	We therefore perform a Lyapunov-Schmidt reduction. Let $Q$ denote the orthogonal projection in $\C^{n+1}$ onto the range of $D_{(u,\nu)} F(0,0;0)$. We let $w = (u, \nu)$, and split the solution as $w = w_c + w_h = (u_c + u_h, \nu_c + \nu_h)$, with $w_c \in \ker (D_{(u,\nu)} F(0,0;0))$ and $w_h \in (\ker (D_{(u,\nu)} F(0,0;0)))^\perp$. Our system then becomes 
	\begin{align}
		\begin{cases}
			QF (u_c + u_h, \nu_c + \nu_h; \lambda) &= 0 \\
			(I-Q) F(u_c + u_h, \nu_c + \nu_h; \lambda) &=0. 
		\end{cases}
	\end{align}
	The linearization of the first equation with respect to $w_c$ is $QD_{(u, \nu)} F(0, 0; 0)\big|_{(\ker (D_{(u,\nu)} F(0,0;0)))^\perp}$, which is invertible by construction. We therefore solve the first equation with the implicit function theorem for $w_h (w_c;\lambda) = \mathrm{O}(|\lambda| + |w_c|^2 + |\lambda||w_c|)$. 
	
	To compute the reduced second equation, we find $I-Q$ explicitly by solving for the cokernel of $D_{(u,\nu)} F(0,0;0)$. Indeed, we have
	\begin{align}
		D_{(u, \nu)} F(0,0;0)^* = \begin{pmatrix}
			(A_\mu^{00})^T & 2 u_0  \\
			\langle \cdot, A^{01}_\mu u_0 \rangle & 0. 
		\end{pmatrix}
	\end{align}
	The kernel of $(A^{00}_\mu)^T$ is one dimensional, spanned by a vector $e_\mathrm{ad}(\mu)$. Notice that by Lemma~\ref{l: robustness sPDR} and the formulation of simple pinched double roots in Hypothesis~\ref{hyp: double root}, we have $\langle e_\mathrm{ad}, A_\mu^{01} u_0 \rangle +\langle e_\mathrm{ad}, A_\mu^{00} u_0 \rangle =0$, hence
	\begin{align}
		\langle e_\mathrm{ad}, A_\mu^{01} u_0 \rangle = -\langle e_\mathrm{ad}, A_\mu^{00} u_0 \rangle = - \langle (A_\mu^{00})^T e_\mathrm{ad}, u_0 \rangle = 0,
	\end{align}
	and therefore $(e_\mathrm{ad}, 0)$ spans the kernel of $D_{(u, \nu)} F(0,0;0)^*$. 
	
	Choosing coordinates on the kernel, we let $(u_c, \nu_c) = \alpha (w_0^u, w_0^\nu)$. From a short computation, we find 
	\begin{multline}
		(I-Q) F(\alpha w_0^u + u_h (\alpha w_0; \lambda), \alpha w_0^\nu + \nu_h (\alpha w_0; \lambda); \lambda) \\ = - \lambda \langle M u_0, e_\mathrm{ad} \rangle + \left( \left \langle A^{01}_\mu (\langle u_1, u_0 \rangle u_0 + u_1) + A^{02}_\mu u_2, e_\mathrm{ad} \right \rangle \right) \alpha^2 + \mathrm{O}(|\lambda|^2, |\alpha|^3, |\alpha||\lambda|),
	\end{multline}
	where $A_\mu^{02} = \partial_{\nu \nu} A_\mu (0, \nu_\mathrm{lin}(\mu))$. Note that, as above, $\langle A_\mu^{01} u_0, e_\mathrm{ad} \rangle = 0$. Hence we obtain the reduced equation
	\begin{align}
		0 =  d_{10} \lambda  + d_{02}  \alpha^2 + \mathrm{O}(|\lambda|^2, |\alpha| |\lambda|, |\alpha|^3)
	\end{align}
	where
	\begin{align}\label{e:dij}
		d_{02} = \langle A^{01}_\mu u_1 + A^{02}_\mu u_2, e_\mathrm{ad} \rangle, \quad d_{10} = \langle -Mu_0, e_\mathrm{ad} \rangle. 
	\end{align}
	Solving with the Newton polygon, we find unique solutions
	\begin{align}
		\alpha (\gamma) = \pm \sqrt{-d_{10}d_{02}^{-1}  } \gamma + \mathrm{O}(\gamma^2), 
	\end{align}
	for $\lambda = \gamma^2$, with $\Re \lambda$ to the right of the critical dispersion curve. Returning to $\nu_c = \alpha w_0^c$, we find
	\begin{align}
		\nu_c^\pm (\gamma) = \pm \sqrt{-d_{10} d_{02}^{-1}} \gamma + \mathrm{O}(\gamma^2). 
	\end{align}
	Similarly,
	\begin{align}
		u_c^\pm = \alpha w_0^u = \pm \sqrt{-d_{10}d_{02}^{-1}} \left(\langle u_0, u_1 \rangle u_0 + u_1 \right) \gamma + \mathrm{O}(\gamma^2). 
	\end{align}
	Since $u_h$ and $\nu_h$ are higher order, this proves the desired expansions, with $u^\infty_\pm = u_0 + u_c^\pm$ and $\nu^\mathrm{pl}_\pm = \nu_\mathrm{lin} + \nu^c_\pm$.  
\end{proof}

\subsection{Expansions of pulled fronts in the leading edge}

We show how the fact that the symbol $A$ has a double root in $\nu$ translates into asymptotics of pulled fronts, Corollary~\ref{c: asymptotic solutions}.

\begin{proof}[Proof of Corollary ~\ref{c: asymptotic solutions}]
 We suppress the arguments $\lambda,c,\mu$ in $A$ and write $A(\nu_0+\nu)=A^0+A^1\nu+\mathrm{O}(\nu^2)$. We need to show that 
 \[
  A(\partial_x)u_0^0 e^{\nu_0 x}=0, \qquad A(\partial_x)(u_0^0x+u_1^0) e^{\nu_0 x}=0.
 \]
 This turns out being equivalent to 
 \[
  A^0u_0^0=0,\qquad (A^0+A^1\partial_x)(u_0^0x+u_1^0)=0,
 \]
which is precisely encoded in \eqref{e:g1} and \eqref{e:g2}.
\end{proof}

\section{Pulled unfolding}\label{s: pulled unfolding}

We prove the claims on $q_\mathrm{pl}$ from Theorem~\ref{t: pulled pushed unfolding}. We start by inserting the far-field core ansatz
\begin{align}
	u(x) = u_- \chi_- (x) + w(x) + \chi_+ [\alpha (u_0(\mu)x + u_1 (\mu)) + \beta u_0 (\mu)] e^{\nu_\mathrm{lin}(\mu) x}
\end{align}
into the traveling wave equation
\begin{align}
	(\mathcal{P}(\partial_x) + c_\mathrm{lin}(\mu) M \partial_x) u + f(u; \mu) = 0,
\end{align}
obtaining an equation
\begin{align}
	F^{\mathrm{pl}}(w; \alpha, \beta, \mu) := (\mathcal{P}(\partial_x) + c_\mathrm{lin}(\mu) M \partial_x) (u_- \chi_- + w + \chi_+ \psi) + f(u_- \chi_- + w + \chi_+ \psi; \mu) = 0,
\end{align}
where 
\begin{align}
	\psi(x; \alpha, \beta, \mu) =  [\alpha (u_0(\mu)x + u_1 (\mu)) + \beta u_0 (\mu)] e^{\nu_\mathrm{lin}(\mu) x}. \label{e: pulled unfolding psi def}
\end{align}
Fix $\eps > 0$ small and let $\eta_0 = -\nu_0 + \eps$. We require $w$ to be faster decaying than $e^{\nu_0 x}$, so we consider $F^{\mathrm{pl}}$ as a function $F^{\mathrm{pl}} : H^{2m}_{\mathrm{exp}, 0, \eta_0} \times \R^2 \times (-\mu_0, \mu_0) \to L^2_{\mathrm{exp}, 0, \eta_0}$ for some $\mu_0$ small. 

\begin{lemma}
	The function $F^{\mathrm{pl}} : H^{2m}_{\mathrm{exp}, 0, \eta_0} \times \R^2 \times (-\mu_0, \mu_0) \to L^2_{\mathrm{exp}, 0, \eta_0}$ is well defined and smooth in all variables.
\end{lemma}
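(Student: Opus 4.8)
The plan is to verify well-definedness and smoothness by checking each of the three summands in $F^{\mathrm{pl}}$ separately, treating the linear (differential-operator) part and the nonlinear part $f$ on their own terms. The key observation that makes the statement plausible is that the ansatz has been engineered so that all the slowly-decaying or constant contributions cancel against the equilibria, leaving a residual that lies in the strongly-weighted target space $L^2_{\mathrm{exp},0,\eta_0}$ with $\eta_0 = -\nu_0 + \eps$.

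First I would handle the linear piece. The map $w \mapsto (\mathcal{P}(\partial_x) + c_\mathrm{lin}(\mu) M \partial_x) w$ is bounded from $H^{2m}_{\mathrm{exp},0,\eta_0}$ into $L^2_{\mathrm{exp},0,\eta_0}$ by the very definition of these weighted Sobolev spaces and is obviously linear, hence smooth; the $\mu$-dependence enters only through the smooth coefficient $c_\mathrm{lin}(\mu)$ from Lemma~\ref{l: robustness sPDR}, so smoothness in $\mu$ is immediate. For the frozen terms, I would apply the differential operator to $u_- \chi_-$ and to $\chi_+ \psi$ directly. The point is that $(\mathcal{P}(\partial_x) + c_\mathrm{lin} M \partial_x)(u_-\chi_-)$ is supported where $\chi_-$ varies, i.e.\ on a compact set, hence is smooth and compactly supported and so lies in every weighted space. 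For $\chi_+\psi$, away from the support of $\chi_+'$ the function $\psi$ is an exact solution of the linearized leading-edge operator by Corollary~\ref{c: asymptotic solutions}, so the differential operator annihilates it there; the only surviving contribution comes from derivatives hitting $\chi_+$, which is again compactly supported. Thus the entire linear part of $F^{\mathrm{pl}}$ maps into $L^2_{\mathrm{exp},0,\eta_0}$, with smooth (indeed polynomial/exponential-linear) dependence on $\alpha,\beta,\mu$.

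The nonlinear part is where the main work lies, and the crux is showing $f(u_-\chi_- + w + \chi_+\psi;\mu) \in L^2_{\mathrm{exp},0,\eta_0}$ with the stated regularity. I would split $\R$ into the core region and the two far fields. On any compact core region smoothness is standard Nemytskii-operator theory using that $f$ is $C^2$ and $w \in H^{2m} \hookrightarrow C^0$. In the left far field $x \to -\infty$, the argument of $f$ converges to $u_-$, so I would Taylor expand $f$ about $u_-$, use $f(u_-;\mu)=0$, and estimate the remainder by the decay of $w$ together with the exponential decay of $u_- - u_-\chi_-$ and $\chi_+\psi$; since the left weight is trivial ($\eta_- = 0$) this is the mildest case. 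In the right far field $x\to+\infty$, the argument tends to $0$, so I expand about $0$ using $f(0;\mu)=0$, writing $f = f_u(0;\mu)(w+\chi_+\psi) + \mathrm{O}(|w+\chi_+\psi|^2)$. The linear term $f_u(0;\mu)(w+\chi_+\psi)$ is fine since it is what the leading-edge linearization already accounts for, and the quadratic remainder decays like $e^{2\nu_0 x}$, which is stronger than the required rate $e^{-\eta_0 x}=e^{(\nu_0-\eps)x}$ precisely because $2\nu_0 < \nu_0 - \eps$ for $\nu_0<0$ and $\eps$ small. The main obstacle, therefore, is the bookkeeping for this quadratic-gain estimate: one must confirm that the loss of one power of $x$ from the $x\,e^{\nu_\mathrm{lin}x}$ term in $\psi$ is harmless (polynomial factors are absorbed by the strict exponential gap) and that products of $w$ with $\chi_+\psi$ and of $w$ with itself all land in the target weight uniformly in $\mu$. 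I would close the argument by recording that smoothness of $F^{\mathrm{pl}}$ in $(w;\alpha,\beta,\mu)$ follows from smoothness of $f$ together with boundedness of the composition operators just estimated, differentiating under the Nemytskii operator in the usual way.
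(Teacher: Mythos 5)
Your proof is correct and is essentially an expanded version of the paper's own argument, which compresses the same two steps into two sentences: well-definedness follows from $f(u_-;\mu)=0$ together with Corollary~\ref{c: asymptotic solutions} (i.e.\ the far-field terms satisfy the traveling wave equation asymptotically, so the residual gains quadratic decay), and smoothness follows from the Banach algebra property of $H^1_{\mathrm{exp},0,\eta_0}$, which is precisely what underlies your Nemytskii-operator step. One wording caveat: in your linear paragraph it is not the bare differential operator $\mathcal{P}(\partial_x)+c_\mathrm{lin}(\mu)M\partial_x$ that annihilates $\psi$ but the full leading-edge linearization including the zeroth-order term $f_u(0;\mu)$ --- a cancellation you do correctly restore later when you pair that operator with the linear term of the Taylor expansion of $f$ at $0$.
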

\begin{proof}
	The fact that $F^{\mathrm{pl}}$ maps into $L^2_{\mathrm{exp},0, \eta_0} (\R)$ and hence is well defined follows from the fact that $f(u_-; \mu) = 0$, and Corollary~\ref{c: asymptotic solutions}, which guarantee that the far-field terms in the ansatz satisfy the traveling wave equation asymptotically. Smoothness follows from the  fact that $H^1_{\mathrm{\exp,0, \eta_0}} (\R)$ is a Banach algebra. 
\end{proof}

Note that at $\mu = 0$, we have $F(w_0; 0, 1, 0) = 0$, where 
\begin{align}
	w_0 (x) = q_0(x) - u_- \chi_- (x) - \psi(x;0,1,0) \chi_+ (x). \label{e: pulled unfolding w0}
\end{align}
By Hypothesis~\ref{hyp: fredholm properties}, the linearization $D_w F(w_0; 0, 1, 0) = \mcl_0$ is Fredholm with index -1. By the Fredholm bordering lemma, the joint linearization $D_{(w,\alpha, \beta)} F^{\mathrm{pl}}(w_0; 0, 1, 0)$ is Fredholm with index 1, so we augment this system with a phase condition, defining
\begin{align}
	G^{\mathrm{pl}}(w; \alpha, \beta, \mu) = \begin{pmatrix}
		F^{\mathrm{pl}}(w; \alpha, \beta, \mu) \\
		\langle w, e_0 \rangle - \langle w_0, e_0 \rangle
	\end{pmatrix}
\end{align}
where $e_0 \in L^2_{\mathrm{exp}, 0, \eta_0} (\R)$ is a fixed localized function chosen such that 
\begin{equation}\label{e:e_0}
\langle q_0' - \nu_0 u_0^0 \chi_+ e^{\nu_0 \cdot}, e_0 \rangle \neq 0.
\end{equation}
The Fredholm bordering lemma implies that $D_{(w, \alpha, \beta)} G^{\mathrm{pl}}(w_0; 0, 1, 0)$ is Fredholm with index zero, and from a short computation, we find
\begin{align*}
	D_{(w,\alpha, \beta)} G(w_0; 0, 1, 0) = \begin{pmatrix}
		B_0 & B_0 [(u_0^0 x + u_1^0) \chi_+e^{\nu_0x}] & B_0 (u_0^0 \chi_+ e^{\nu_0 x}) \\
		\langle \cdot, e_0 \rangle &0 &0 
	\end{pmatrix}. 
\end{align*}

\begin{prop}
	The linear operator \[D_{(w,\alpha, \beta)} G^{\mathrm{pl}}(w_0, 0, 1, 0) : H^{2m}_{\mathrm{exp}, 0, \eta_0} \times \R^2 \times (-\mu_0, \mu_0) \to L^2_{\mathrm{exp}, 0, \eta_0} (\R) \times \R\] is invertible. 
\end{prop}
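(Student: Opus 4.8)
The plan is to use the index information the paper has already extracted: by the Fredholm bordering lemma, $D_{(w,\alpha,\beta)}G^{\mathrm{pl}}(w_0;0,1,0)$ is Fredholm of index $0$, so it suffices to prove injectivity. I would therefore assume $(w,\alpha,\beta)$ lies in the kernel and deduce $w=0$ and $\alpha=\beta=0$. Writing out the two rows, the kernel relation is $B_0\big[w + \alpha(u_0^0 x + u_1^0)\chi_+ e^{\nu_0 x} + \beta u_0^0 \chi_+ e^{\nu_0 x}\big] = 0$ together with $\langle w, e_0\rangle = 0$.

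The key step is a weighted-space bookkeeping. The correction $w$ lives in the strong space $H^{2m}_{\mathrm{exp},0,\eta_0}$ with $\eta_0 = -\nu_0 + \eps$, i.e.\ it decays strictly faster than $e^{\nu_0 x}$, whereas the two appended far-field terms decay only like $x e^{\nu_0 x}$ and $e^{\nu_0 x}$ and thus belong to the weaker space $H^{2m}_{\mathrm{exp},0,-\nu_0-\eps}$. Consequently the whole bracket $v := w + \alpha(u_0^0 x + u_1^0)\chi_+ e^{\nu_0 x} + \beta u_0^0 \chi_+ e^{\nu_0 x}$ lies in the weaker space and satisfies $B_0 v = 0$ there. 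By the first part of Hypothesis~\ref{hyp: fredholm properties}, $B_0$ on the weaker space has one-dimensional kernel $\spn(q_0')$, so $v = \gamma q_0'$ for some $\gamma\in\R$.

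I would then match leading-edge asymptotics in the identity $v = \gamma q_0'$. By Hypothesis~\ref{hyp: front existence}, $q_0'(x) = \nu_0 u_0^0 e^{\nu_0 x} + \mathrm{O}(e^{(\nu_0-\eta)x})$ carries \emph{no} linearly growing $x e^{\nu_0 x}$ term; this vanishing is exactly the codimension-one content of \eqref{e: asyle}. Since the only source of an $x e^{\nu_0 x}$ contribution on the left is the $\alpha$-term (both $w$ and the remainder in $q_0'$ decay strictly faster), matching that coefficient forces $\alpha u_0^0 = 0$, hence $\alpha = 0$ as $u_0^0\neq 0$. Matching the pure $e^{\nu_0 x}$ coefficient then gives $\beta = \gamma\nu_0$, so that $w = \gamma\big(q_0' - \nu_0 u_0^0 \chi_+ e^{\nu_0 \cdot}\big)$, which indeed lies in the strong space. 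The phase condition $\langle w, e_0\rangle = 0$ now reads $\gamma\,\langle q_0' - \nu_0 u_0^0 \chi_+ e^{\nu_0 \cdot}, e_0\rangle = 0$, and the normalization \eqref{e:e_0} makes this inner product nonzero, forcing $\gamma = 0$, hence $w=0$ and $\beta=0$. Injectivity together with index $0$ yields invertibility.

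The main obstacle is the asymptotic-matching step: one must justify that the decomposition into an $x e^{\nu_0 x}$ part, an $e^{\nu_0 x}$ part, and a strictly-faster-decaying remainder is unambiguous, so that the coefficients can genuinely be read off. I expect this to rest on the linear independence of $x e^{\nu_0 x}$ and $e^{\nu_0 x}$ modulo the strong space, combined with the precise front asymptotics. The crucial analytic input is precisely that $q_0'$ has a vanishing linear term, so it is the codimension-one structure of the pushed-to-pulled transition, encoded in Hypothesis~\ref{hyp: front existence}, that ultimately makes the kernel trivial.
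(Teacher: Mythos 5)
Your proposal is correct and follows essentially the same route as the paper: reduce to triviality of the kernel via the index-zero Fredholm property, identify the bracketed sum with a multiple of $q_0'$ using the kernel characterization in Hypothesis~\ref{hyp: fredholm properties} on the weaker space, use the absence of the $x e^{\nu_0 x}$ term in \eqref{e: asyle} to force $\alpha=0$ and $\beta=\gamma\nu_0$, and conclude $\gamma=0$ from the phase condition and \eqref{e:e_0}. The only difference is cosmetic: you spell out the weighted-space bookkeeping and the unambiguity of the asymptotic matching, which the paper leaves implicit.
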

\begin{proof}
	Since $D_{(w,\alpha, \beta)} G^{\mathrm{pl}}(w_0, 0, 1, 0)$ is Fredholm index zero, it suffices to prove that the kernel of this operator is trivial. Suppose 
	\begin{align*}
		\begin{pmatrix}
			B_0 & B_0 [(u_0^0 x + u_1^0) \chi_+e^{\nu_0x}] & B_0 (u_0^0 \chi_+ e^{\nu_0 x}) \\
			\langle \cdot, e_0 \rangle &0 &0 
		\end{pmatrix} \begin{pmatrix} v \\ a \\ b \end{pmatrix}
	\end{align*}
	for some $(v,a,b)^T \in H^{2m}_{\mathrm{exp}, 0, \eta_0} \times \R^2$. Then, in particular,
	\begin{align*}
		B_0 [v + a (u_0^0 x + u_1^0)e^{\nu_0 x} + bu_0 e^{\nu_0 x}] = 0. 
	\end{align*}
	By Hypothesis~\ref{hyp: front existence} and translation invariance, we have that $B_0 q_0' = 0$. By the assumption on minimality of the kernel of $B_0$ in Hypothesis~\ref{hyp: fredholm properties}, we conclude that this is the unique solution up to a constant multiple which is localized on the left and decays faster than $e^{(\nu_0 +\eps) x}$ as $x \to \infty$. Hence we must have $a = 0$, and 
	\begin{align*}
		v(x) + b u_0^0 \chi_+(x) e^{\nu_0 x} = c_1 q_0'(x) \sim c_1 \nu_0 u_0^0 e^{\nu_0 x}
	\end{align*}
	for some constant $c_1 \in \R$. Since $v$ decays faster than $e^{\nu_0 x}$, we must have $b = c_1 \nu_0$, and so
	\begin{align*}
		v(x) = c_1 (q_0'(x) - \nu_0 u_0^0 \chi_+ e^{\nu_0 x}).  
	\end{align*}
	The second equation then implies 
	\begin{align*}
		c_1 \langle q_0' - \nu_0 u_0^0 \chi_+ e^{\nu_0 \cdot}, e_0 \rangle = 0.
	\end{align*}
	With the choice of $e_0$ in \eqref{e:e_0}, we conclude that $c_1 = 0$, and hence the kernel of $D_{(w,\alpha,\beta)} G^{\mathrm{pl}}(w_0; 0, 1, 0)$ is trivial, as desired. 
\end{proof}

\begin{proof}[Proof of Theorem~\ref{t: pulled pushed unfolding}  --- the case $q_\mathrm{pl}$]
	Note that $G(w_0; 0, 1, 0) = 0$, where $w_0$ is given by \eqref{e: pulled unfolding w0}, and by the preceding proposition $D_{(w,a,b)} G^{\mathrm{pl}}(w_0; 0, 1, 0)$ is invertible. By the implicit function theorem, we find $(w(\mu); \alpha(\mu), \beta(\mu))$ so that $G^{\mathrm{pl}}(w(\mu); \alpha(\mu),\beta (\mu), \mu) = 0$ for $\mu$ small. The ansatz
	\begin{align}
		q_{\mathrm{pl}}(x;\mu) = u_- \chi_- (x) + w(x; \mu) + \chi_+(x) [\alpha(\mu) (u_0 (\mu) x + u_1 (\mu)) + \beta(\mu) u_0(\mu)] e^{\nu (\mu) x} 
	\end{align}
	then gives the desired pulled front solutions. 
\end{proof}

\section{Pushed unfolding}\label{s: pushed unfolding}

We now prove the claims on $q_\mathrm{ps}$ from Theorem~\ref{t: pulled pushed unfolding}.


\subsection{Existence of $q_\mathrm{ps}$}

\begin{lemma}\label{l: pushed saddle node}
	The equation $A(0,\nu, c_\mathrm{lin}+\sigma^2; \mu) u = 0$ has two solutions $(\nu^\mathrm{ps}_\pm(\mu, \sigma), u^\mathrm{ps}_\pm (\mu, \sigma))$ for $\sigma$ small, $\nu \approx \nu_\mathrm{lin}(\mu)$, with expansions
	\begin{align}
		\nu^\mathrm{ps}_\pm (\mu, \sigma) &= \nu_\mathrm{lin}(\mu) \pm \sqrt{d_{10}(\mu) d_{02}(\mu)^{-1} \nu_\mathrm{lin}(\mu)} \sigma + \mathrm{O}(\sigma^2) \label{e: nu pm expansions}, \\
		u^\mathrm{ps}_\pm(\mu, \sigma) &= u_0^0 \pm \sqrt{d_{10}(\mu) d_{02}(\mu)^{-1} \nu_\mathrm{lin}(\mu)} (\langle u_0, u_1 \rangle u_0 + u_1) \sigma + \mathrm{O}(\sigma^2). 
	\end{align}
\end{lemma}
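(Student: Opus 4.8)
The statement is a direct analogue of Lemma~\ref{l: saddle node}, with the distinguished bifurcation parameter being the speed correction $\sigma$ (entering as $c = c_\mathrm{lin}(\mu) + \sigma^2$) rather than the square root $\gamma = \sqrt\lambda$ of the temporal eigenvalue, and with $\lambda$ frozen at $0$. The plan is therefore to run the same Lyapunov--Schmidt reduction, tracking how the speed correction enters the expansion of the symbol $A$ in place of $\lambda$. First I would set $A_\mu(\sigma,\nu) = A(0, \nu_\mathrm{lin}(\mu)+\nu, c_\mathrm{lin}(\mu)+\sigma^2;\mu)$ and define the augmented map
\begin{align*}
	F(u,\nu;\sigma) = \begin{pmatrix}
		A_\mu(\sigma,\nu)(u_0(\mu)+u) \\
		|u_0 + u|^2 - 1
	\end{pmatrix},
\end{align*}
exactly as before, so that the second component fixes the free constant multiple of the eigenvector. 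By Hypothesis~\ref{hyp: double root} (via Lemma~\ref{l: robustness sPDR}), $F(0,0;0)=0$ and the linearization $D_{(u,\nu)}F(0,0;0)$ has the same one-dimensional kernel spanned by $w_0 = (\langle u_1,u_0\rangle u_0 + u_1,\,1)$ and the same cokernel spanned by $(e_\mathrm{ad},0)$ as in the proof of Lemma~\ref{l: saddle node}; in particular $\langle A_\mu^{01} u_0, e_\mathrm{ad}\rangle = 0$ carries over verbatim.

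Second, I would perform the Lyapunov--Schmidt reduction: project onto the range with $Q$, split $(u,\nu) = \alpha w_0 + w_h$, solve the ranged equation for $w_h(\alpha;\sigma)$ by the implicit function theorem, and compute the scalar reduced equation $(I-Q)F = 0$. The only new ingredient relative to Lemma~\ref{l: saddle node} is the $\sigma$-dependence of $A$: since $c$ enters $A$ through the term $c\nu M$ (see \eqref{e:sym}), we have $\partial_c A = \nu M$, so at the base point $\partial_{(\sigma^2)} A(\star) = \nu_\mathrm{lin}(\mu) M$. Thus the lowest-order contribution of the speed correction to the reduced equation is $\langle \nu_\mathrm{lin}(\mu) M u_0, e_\mathrm{ad}\rangle\,\sigma^2 = -d_{10}(\mu)\nu_\mathrm{lin}(\mu)\,\sigma^2$, replacing the $-\lambda\langle M u_0,e_\mathrm{ad}\rangle = d_{10}\lambda$ term that appeared in the pulled case. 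The quadratic-in-$\alpha$ term is unchanged and contributes $d_{02}(\mu)\alpha^2$. The reduced equation is therefore
\begin{align*}
	0 = -d_{10}(\mu)\nu_\mathrm{lin}(\mu)\,\sigma^2 + d_{02}(\mu)\,\alpha^2 + \mathrm{O}(\sigma^4, |\alpha|^3, |\alpha|\sigma^2),
\end{align*}
whose Newton-polygon solutions are $\alpha = \pm\sqrt{d_{10}(\mu)d_{02}(\mu)^{-1}\nu_\mathrm{lin}(\mu)}\,\sigma + \mathrm{O}(\sigma^2)$. Reinserting $\nu_c = \alpha w_0^\nu = \alpha$ and $u_c = \alpha w_0^u = \alpha(\langle u_1,u_0\rangle u_0 + u_1)$, and recalling $u_h,\nu_h$ are higher order, yields the two claimed branches $\nu^\mathrm{ps}_\pm = \nu_\mathrm{lin} + \nu_c^\pm$ and $u^\mathrm{ps}_\pm = u_0 + u_c^\pm$.

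The main point requiring care is bookkeeping of the branch-parameter conventions: here $\sigma$ is a genuine real parameter (the speed enters as $\sigma^2$), so unlike the pulled case there is no Riemann-surface/branch-cut subtlety, and the two signs $\pm$ correspond to the two genuine spatial roots that emerge from the double root as the speed is increased above $c_\mathrm{lin}(\mu)$. I would also verify that the sign under the square root is positive so the roots are real: by the sign condition \eqref{e: generic sn assumption} encoded in Hypothesis~\ref{hyp: double root}, $d_{10}d_{02} < 0$, and since $\nu_\mathrm{lin}(\mu) < 0$ (as $\nu_0 < 0$ and by continuity), the product $d_{10}d_{02}^{-1}\nu_\mathrm{lin} = (d_{10}d_{02})d_{02}^{-2}\nu_\mathrm{lin} > 0$, confirming real roots and that the two branches split symmetrically in $\sigma$ at leading order. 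Beyond this sign check the argument is a routine transcription of Lemma~\ref{l: saddle node}, with $\sigma^2$ playing the role of $\lambda$; the uniform smallness of remainders in $\mu$ follows as before from the smooth $\mu$-dependence guaranteed by Lemma~\ref{l: robustness sPDR}.
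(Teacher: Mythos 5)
Your proof is correct, but it is not the route the paper takes. The paper disposes of this lemma in two lines: since the speed enters the symbol only through the term $c\nu M$, one has the identity $A(0,\nu, c_\mathrm{lin}(\mu)+\sigma^2; \mu) = A(-\sigma^2\nu, \nu, c_\mathrm{lin}(\mu); \mu)$, so the pushed spatial-root problem is precisely the pulled one of Lemma~\ref{l: saddle node} evaluated at $\lambda = -\sigma^2\nu$; substituting $\gamma = \sigma\sqrt{-\nu_\mathrm{lin}(\mu)}\,(1+\mathrm{O}(\sigma))$ into the expansions there gives exactly the claimed formulas, using $\sqrt{-d_{10}d_{02}^{-1}}\sqrt{-\nu_\mathrm{lin}} = \sqrt{d_{10}d_{02}^{-1}\nu_\mathrm{lin}}$. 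You instead re-run the Lyapunov--Schmidt reduction from scratch, with the speed increment $\sigma^2$ as the unfolding parameter and the observation $\partial_c A = \nu M$ supplying the new linear term $-d_{10}\nu_\mathrm{lin}\sigma^2$ in the reduced equation. Both arguments are sound, and your bookkeeping is right: the kernel and cokernel at the base point are unchanged since the linearization at $\sigma=0$ is identical to that in Lemma~\ref{l: saddle node}, the cross term of order $\sigma^2|\alpha|$ is subsumed in the remainder, and the sign check $d_{10}d_{02}^{-1}\nu_\mathrm{lin}>0$ (from \eqref{e: generic sn assumption} together with $\nu_\mathrm{lin}<0$) correctly guarantees real roots. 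What the paper's route buys is brevity and reuse of an established lemma. What your route buys is self-containedness, a transparent explanation of why $\nu_\mathrm{lin}$ appears under the square root (it is exactly the factor $\nu$ in $\partial_c A = \nu M$), and the avoidance of a small subtlety the paper glosses over: in the substitution argument, $\lambda = -\sigma^2\nu$ depends on the unknown $\nu$ itself, so strictly one must still close a fixed-point or implicit-function step of the form $\nu = \nu^\mathrm{pl}_\pm(\mu, \sigma\sqrt{-\nu})$ to extract the expansion, whereas your direct reduction needs no such step.
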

\begin{proof}
	Note that $A(0,\nu, c_\mathrm{lin}+\sigma^2; \mu) = A( - \sigma^2 \nu,\nu, c_\mathrm{lin}(\mu); \mu)$. The result then follows by applying Lemma~\ref{l: saddle node}. 
\end{proof}

\begin{corollary}\label{c: pushed leading edge}
	The linearization in the leading edge
	\begin{align}
		(\mathcal{P}(\partial_x) + (c_\mathrm{lin}(\mu)+\sigma^2) M \partial_x + f_u(0; \mu)) u = 0
	\end{align}
	has a solution
	\begin{align*}
		u(x; \mu) = u^\mathrm{ps}_- (\mu, \sigma) e^{\nu^\mathrm{ps}_- (\mu, \sigma) x}.
	\end{align*}
\end{corollary}

To construct the bifurcating pushed front solutions, we insert the far-field core ansatz
\begin{align}
	u(x) = u_- \chi_- (x) + w(x) + a u^\mathrm{ps}_- (\mu, \sigma) \chi_+ (x) e^{\nu^\mathrm{ps}_-(\mu, \sigma) x}
\end{align}
into the traveling wave equation to obtain
\begin{multline*}
	F^{\mathrm{ps}}(w; a, \sigma, \mu) := (\mathcal{P}(\partial_x) + (c_\mathrm{lin}(\mu) + \sigma^2) M \partial_x) (u_- \chi_- + w + a u^\mathrm{ps}_- (\mu, \sigma) \chi_+ e^{\nu^\mathrm{ps}_-(\mu, \sigma) \cdot}) \\ + f(u_- \chi_- + w + \alpha u^\mathrm{ps}_- (\mu, \sigma) \chi_+ e^{\nu_-^\mathrm{ps}(\mu, \sigma) \cdot}; \mu) = 0. 
\end{multline*}
It follows from Corollary~\ref{c: pushed leading edge} that $F^{\mathrm{ps}} : H^{2m}_{\mathrm{exp}, 0, \eta_0} \times \R^2 \times (-\mu_0, \mu_0) \to L^2_{\mathrm{exp}, 0, \eta_0} (\R)$ is well defined and smooth for $\mu_0$ sufficiently small. By Hypothesis~\ref{hyp: front existence}, at $\mu = 0$ we have a solution $F(w_0; 1, 0, 0) = 0$, where 
\begin{align*}
	w_0 (x) = q_0 (x) - u_- \chi_- (x) - \chi_+ (x) u_0^0 e^{\nu_0 x}. 
\end{align*}
As in the pulled case, Hypothesis~\ref{hyp: fredholm properties} together with the Fredholm bordering lemma implies that $D_{(w, a, \sigma)} F^\mathrm{ps}(w_0; 1, 0, 0)$ is Fredholm with index 1. We again augment with a phase condition, defining
\begin{align}
	G^{\mathrm{ps}}(w; a, \sigma, \mu) = \begin{pmatrix}
		F^{\mathrm{ps}}(w; a, \sigma, \mu) \\
		\langle w, e_0 \rangle - \langle w_0, e_0 \rangle,
	\end{pmatrix}
\end{align}
where $e_0$ is chosen as in Section~\ref{s: pulled unfolding}. From a short calculation, we find
\begin{align*}
	D_{(w,a, \sigma)} G^{\mathrm{ps}}(w_0; 1, 0, 0) = \begin{pmatrix}
		B_0 & B_0 (u_0^0 \chi_+ e^{\nu_0 x}) & B_0 [ (\partial_\sigma u^\mathrm{ps}_-(0,0) + x \partial_\sigma \nu_-^\mathrm{ps} (0, 0) x) \chi_+ e^{\nu_0 x} ] \\
		\langle \cdot, e_0 \rangle & 0 & 0
	\end{pmatrix}. 
\end{align*}

\begin{prop}
	The linear operator \[D_{(w, a, \sigma)} G^{\mathrm{ps}}(w_0, 1, 0, 0) : H^{2m}_{\mathrm{exp}, 0, \eta_0} \times \R^2 \times (-\mu_0, \mu_0) \to L^2_{\mathrm{exp}, 0, \eta_0 } (\R) \times \R\] is invertible.
\end{prop}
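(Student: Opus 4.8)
The plan is to follow the template of the preceding (pulled) proposition verbatim. First I would record that, by the same bordering argument, $D_{(w,a,\sigma)} G^{\mathrm{ps}}(w_0,1,0,0)$ is Fredholm of index zero: the operator $B_0$ on $H^{2m}_{\mathrm{exp},0,\eta_0}$ has index $-1$ by Hypothesis~\ref{hyp: fredholm properties}, the far-field/core ansatz borders it up by the two scalar unknowns $a,\sigma$ to index $+1$, and the phase condition borders it back down to index $0$. Hence it suffices to prove the kernel is trivial. So I would take $(v,a,b) \in H^{2m}_{\mathrm{exp},0,\eta_0} \times \R^2$ in the kernel and set $u := v + a\, u_0^0 \chi_+ e^{\nu_0 x} + b\, \Psi$, where $\Psi = [\partial_\sigma u^{\mathrm{ps}}_-(0,0) + x\,\partial_\sigma \nu^{\mathrm{ps}}_-(0,0)\, u_0^0]\chi_+ e^{\nu_0 x}$ is the third-column generator obtained by differentiating the leading-edge ansatz in $\sigma$; the two rows of the kernel equation then read $B_0 u = 0$ and $\langle v, e_0 \rangle = 0$.

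The one genuinely new step — the only place this differs from the pulled case — is to show $b = 0$. I would read off the leading-edge asymptotics of $u$ using Lemma~\ref{l: pushed saddle node} and Corollary~\ref{c: pushed leading edge}: since $v$ decays strictly faster than $e^{\nu_0 x}$, we get $u \sim [\,(\cdots) u_0^0 + b\,\partial_\sigma \nu^{\mathrm{ps}}_-(0,0)\, x\, u_0^0\,] e^{\nu_0 x}$ as $x \to \infty$, which is exactly the double-root form of Corollary~\ref{c: asymptotic solutions} with linear-growth coefficient proportional to $b\,\partial_\sigma \nu^{\mathrm{ps}}_-(0,0)$. In particular $u$ lies in the \emph{weak}-weight space $H^{2m}_{\mathrm{exp},0,-\nu_0-\eps}$, where by Hypothesis~\ref{hyp: fredholm properties} the kernel of $B_0$ is one-dimensional and spanned by $q_0'$. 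But $q_0' \sim \nu_0 u_0^0 e^{\nu_0 x}$ carries \emph{no} linearly growing term — this is precisely the codimension-one content of Hypothesis~\ref{hyp: front existence} — so matching the $x e^{\nu_0 x}$ coefficient forces $b\,\partial_\sigma \nu^{\mathrm{ps}}_-(0,0) = 0$. Since Lemma~\ref{l: pushed saddle node} gives $\partial_\sigma \nu^{\mathrm{ps}}_-(0,0) = -\sqrt{d_{10} d_{02}^{-1} \nu_0} \neq 0$ (the double-root genericity of Hypothesis~\ref{hyp: double root} ensures $d_{10}, d_{02} \neq 0$, and the radicand is positive since $\nu_0 < 0$), we conclude $b = 0$.

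With $b = 0$ the problem collapses to the pulled computation. Now $u = v + a\, u_0^0 \chi_+ e^{\nu_0 x} \in \ker B_0$ in the weak space, so $u = c_1 q_0'$ for some $c_1 \in \R$. Comparing $e^{\nu_0 x}$ coefficients (using that $v$ decays faster) gives $a = c_1 \nu_0$ and hence $v = c_1(q_0' - \nu_0 u_0^0 \chi_+ e^{\nu_0 \cdot})$; the phase condition then reads $c_1 \langle q_0' - \nu_0 u_0^0 \chi_+ e^{\nu_0 \cdot}, e_0 \rangle = 0$, and the choice \eqref{e:e_0} of $e_0$ forces $c_1 = 0$, whence $a = 0$ and $v = 0$. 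Thus the kernel is trivial, and being Fredholm of index zero the operator is invertible.

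I would expect the main obstacle to be purely bookkeeping rather than conceptual: confirming that differentiating $a\, u^{\mathrm{ps}}_-(\mu,\sigma) \chi_+ e^{\nu^{\mathrm{ps}}_-(\mu,\sigma) x}$ in $\sigma$ really produces the generator $\Psi$ with the stated nonvanishing linear-growth coefficient $\partial_\sigma \nu^{\mathrm{ps}}_-(0,0)$, and that this term lands $u$ in the weak-weight space (not the strong one), so that the minimality clause of Hypothesis~\ref{hyp: fredholm properties} applies. Once that is pinned down, the linear-growth obstruction eliminates the new unknown $b$, and the remainder is identical to the preceding proposition.
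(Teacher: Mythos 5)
Your proof is correct and follows essentially the same route as the paper's: Fredholm index zero via the bordering lemma, then triviality of the kernel by observing that $v + a\,u_0^0\chi_+e^{\nu_0 x} + b\,\Psi$ lies in the kernel of $B_0$ in the weak-weight space, hence is a multiple of $q_0'$, so that the absence of an $x e^{\nu_0 x}$ term in $q_0'$ together with $\partial_\sigma\nu_-^{\mathrm{ps}}(0,0)\neq 0$ forces $b=0$, after which the pulled-case argument ($a = c_1\nu_0$, phase condition \eqref{e:e_0} gives $c_1=0$) finishes the proof. Your added verification that the radicand $d_{10}d_{02}^{-1}\nu_0$ is positive is a correct elaboration of what the paper simply cites from \eqref{e: nu pm expansions}.
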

\begin{proof}
	The Fredholm properties of $F^{\mathrm{ps}}$ together with the Fredholm bordering lemma imply that this linearization is Fredholm with index zero, so as in the pulled case we only have to prove that the kernel is trivial. Assume there exists $(v, a, b)^T \in H^{2m}_{\mathrm{exp}, 0, \eta_0} \times \R^2$ such that 
	\begin{align*}
		\begin{pmatrix}
			B_0 & B_0 (u_0^0 \chi_+ e^{\nu_0 x}) & B_0 [ (\partial_\sigma u^\mathrm{ps}_-(0,0) + x \partial_\sigma \nu_-^\mathrm{ps} (0, 0) u_0^0 x) \chi_+ e^{\nu_0 x} ] \\
			\langle \cdot, e_0 \rangle & 0 & 0 
		\end{pmatrix} \begin{pmatrix} v \\ a \\ b \end{pmatrix} = 0. 
	\end{align*} 
	Then in particular
	\begin{align*}
		B_0 \left[ v + a u_0^0 \chi_+ e^{\nu_0 x} + b (\partial_\sigma u^\mathrm{ps}_-(0,0) + x \partial_\sigma \nu_-^\mathrm{ps} (0, 0) u_0^0 x) \chi_+ e^{\nu_0 x} \right] = 0. 
	\end{align*}
	Since again $q_0'(x) \sim \nu_0 u_0^0 e^{\nu_0 x}$ is the unique solution to $B_0 u = 0$ which decays faster than $e^{(\nu_0 + \eps) x}$ as $x \to \infty$ and is localized on the left, and $\partial_\sigma \nu_-^\mathrm{ps}(0, 0) \neq 0$ by \eqref{e: nu pm expansions}, we conclude that $b = 0$, that $a = c_1 \nu_0$ for some constant $c_1$, and 
	\begin{align*}
		v(x) = c_1 (q_0'(x) - \nu_0 \chi_+ e^{\nu_0 x}). 
	\end{align*}
	The equation $\langle v, e_0 \rangle = 0$ then implies $c_1 \langle (q_0'(x) - \nu_0 \chi_+ e^{\nu_0 x}, e_0 \rangle = 0$, but choosing $e_0$ as in Section~\ref{s: pulled unfolding}, this implies $c_1 = 0$, and so the kernel is trivial, as desired. 
\end{proof}

\begin{proof}[Proof of Theorem~\ref{t: pulled pushed unfolding} --- existence of $q_\mathrm{ps}$]
	We have $G^{\mathrm{ps}}(w_0; 1, 0, 0) = 0$, with $D_{(w, a, \sigma)} G^{\mathrm{ps}}(w_0; 1, 0, 0)$ invertible. The existence of $q_\mathrm{ps}$ in Theorem~\ref{t: pulled pushed unfolding} then follows directly from the implicit function theorem, with 
	\begin{align*}
		q_\mathrm{ps} (x; \mu) = u_- \chi_- (x) + w(x; \mu) + a(\mu) u^\mathrm{ps}_-(\mu, \sigma(\mu)) \chi_+(x) e^{\nu^\mathrm{ps}_- (\mu, \sigma(\mu)) x},
	\end{align*}
	where $G^{\mathrm{ps}}(w(\cdot; \mu); a(\mu), \sigma(\mu), \mu) = 0$ by the implicit function theorem. 
\end{proof}

\subsection{Expansion of $c_\mathrm{ps}(\mu)$}
Having established the existence of $q_\mathrm{pl}$ and $q_\mathrm{ps}$, to complete the proof of Theorem~\ref{t: pulled pushed unfolding} it only remains to establish the expansion \eqref{e: pushed speed expansion} for $c_\mathrm{ps}(\mu)$. 

\begin{prop}\label{p: alpha sigma relation}
	Assume that the family of pulled fronts from Theorem~\ref{t: pulled pushed unfolding} satisfies $\alpha'(0) \neq 0$. Then 
	\begin{align}
		\sigma'(0) = - \frac{1}{\sqrt{d_{10}d_{02}^{-1} \nu_0}} \alpha'(0) \neq 0. 
	\end{align}
\end{prop}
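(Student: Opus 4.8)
The plan is to compare the two families only to first order in $\mu$ and to exploit that, although $q_\mathrm{pl}$ and $q_\mathrm{ps}$ travel at different speeds, those speeds agree to \emph{first} order: since $c_\mathrm{ps}(\mu)=c_\mathrm{lin}(\mu)+\sigma(\mu)^2$ with $\sigma(0)=0$, we have $c_\mathrm{ps}'(0)=c_\mathrm{lin}'(0)=c_\mathrm{pl}'(0)$. Write $\dot q_\mathrm{pl}=\partial_\mu q_\mathrm{pl}(\cdot;\mu)|_{\mu=0}$ and $\dot q_\mathrm{ps}=\partial_\mu q_\mathrm{ps}(\cdot;\mu)|_{\mu=0}$, both of which exist and are smooth by the implicit-function-theorem construction in Theorem~\ref{t: pulled pushed unfolding}. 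Differentiating the traveling-wave equation $\mathcal{P}(\partial_x)q+cM\partial_x q+f(q;\mu)=0$ in $\mu$ at $\mu=0$, and using $q_\mathrm{pl}(\cdot;0)=q_\mathrm{ps}(\cdot;0)=q_0$ together with the equality of speed derivatives, I find that $\dot q_\mathrm{pl}$ and $\dot q_\mathrm{ps}$ satisfy the \emph{same} inhomogeneous linear equation $B_0\dot q+c_\mathrm{lin}'(0)Mq_0'+f_\mu(q_0;0)=0$.

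It follows that $v:=\dot q_\mathrm{pl}-\dot q_\mathrm{ps}\in\ker B_0$. The far-field/core constructions place the core remainders $w$ for both families in $H^{2m}_{\mathrm{exp},0,\eta_0}$ with $\eta_0=-\nu_0+\eps$, so these remainders and their $\mu$-derivatives decay strictly faster than $e^{\nu_0 x}$; all borderline leading-edge behavior is therefore carried by the explicit ansatz terms, which grow at most like $x\,e^{\nu_0 x}$. Hence $v\in H^{2m}_{\mathrm{exp},0,-\nu_0-\eps}$, where by Hypothesis~\ref{hyp: fredholm properties} the kernel of $B_0$ is one-dimensional and spanned by $q_0'$. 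By Hypothesis~\ref{hyp: front existence}, $q_0'\sim\nu_0 u_0^0 e^{\nu_0 x}$ has pure exponential asymptotics, so every element of $\ker B_0$, and in particular $v$, carries no $x\,e^{\nu_0 x}$ term.

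The conclusion then comes from extracting the coefficient of $x\,u_0^0 e^{\nu_0 x}$ from each derivative and equating. Differentiating the pulled leading-edge ansatz with $\alpha(0)=0$, $\beta(0)=1$, the only contributions to this coefficient are $\alpha'(0)$ (from the newly emerging linear term) and $\nu_\mathrm{lin}'(0)$ (from differentiating the exponent), giving $\alpha'(0)+\nu_\mathrm{lin}'(0)$. For the pushed ansatz the coefficient arises solely from differentiating $e^{\nu^\mathrm{ps}_-(\mu,\sigma(\mu))x}$; by the chain rule and Lemma~\ref{l: pushed saddle node}, $\tfrac{d}{d\mu}\nu^\mathrm{ps}_-(\mu,\sigma(\mu))|_0=\nu_\mathrm{lin}'(0)-\sqrt{d_{10}d_{02}^{-1}\nu_0}\,\sigma'(0)$, so the coefficient is $\nu_\mathrm{lin}'(0)-\sqrt{d_{10}d_{02}^{-1}\nu_0}\,\sigma'(0)$. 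Since the $x\,e^{\nu_0 x}$ coefficient of $v$ must vanish, subtraction yields $\alpha'(0)+\sqrt{d_{10}d_{02}^{-1}\nu_0}\,\sigma'(0)=0$, which is exactly the claimed identity; the sign condition \eqref{e: generic sn assumption} gives $d_{10}d_{02}^{-1}\nu_0>0$, so the root is a nonzero real and $\alpha'(0)\neq0$ forces $\sigma'(0)\neq0$.

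The step requiring the most care is reading off the $x\,e^{\nu_0 x}$ coefficient rigorously: one must confirm that the core remainders and their $\mu$-derivatives decay faster than $e^{\nu_0 x}$, so that this borderline term is captured entirely by the explicit far-field ansatz, and that membership $v\in\ker B_0\subset H^{2m}_{\mathrm{exp},0,-\nu_0-\eps}$ genuinely precludes any such term. Both are direct consequences of the weighted-space framework already established in Hypotheses~\ref{hyp: front existence}--\ref{hyp: fredholm properties}, so what remains is the bookkeeping of the two leading-edge expansions rather than any new analytic estimate.
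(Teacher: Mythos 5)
Your proof is correct, and it takes a genuinely different route from the paper's. The paper differentiates the two far-field/core maps $F^{\mathrm{ps}}$ and $F^{\mathrm{pl}}$ in $\mu$ and projects each onto the cokernel element $\phi$ of $\mcl_0$; after using Lemma~\ref{l: projections} to kill the terms that project to zero, it solves each solvability condition separately, obtaining $\sigma'(0)$ and $\alpha'(0)$ as ratios whose numerators coincide and whose denominators differ exactly by the factor $\sqrt{d_{10}d_{02}^{-1}\nu_0}$, whence the relation. You instead work with the \emph{kernel} rather than the cokernel: the observation $c_\mathrm{ps}'(0)=c_\mathrm{lin}'(0)$ (immediate from $c_\mathrm{ps}=c_\mathrm{lin}+\sigma^2$, $\sigma(0)=0$) shows $\dot q_\mathrm{pl}$ and $\dot q_\mathrm{ps}$ solve the same inhomogeneous equation, so their difference $v$ lies in $\ker B_0$ on $H^{2m}_{\mathrm{exp},0,-\nu_0-\eps}$, which Hypothesis~\ref{hyp: fredholm properties} pins down as $\spn(q_0')$; since $q_0'$ has pure exponential tail by Hypothesis~\ref{hyp: front existence}, the $x\,e^{\nu_0 x}$ coefficients of the two explicit ansatz derivatives must cancel, and the bookkeeping (including the chain rule through $\nu^\mathrm{ps}_-(\mu,\sigma(\mu))$ via Lemma~\ref{l: pushed saddle node}) reproduces exactly $\alpha'(0)+\sqrt{d_{10}d_{02}^{-1}\nu_0}\,\sigma'(0)=0$, with $d_{10}d_{02}^{-1}\nu_0>0$ following from \eqref{e: generic sn assumption} and $\nu_0<0$ as you say. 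What each approach buys: the paper's projection computation recycles machinery (Lemma~\ref{l: projections} and the cokernel pairing) that is needed again almost verbatim in the stability analysis of Section~\ref{s: pulled stability}, so little is wasted; your argument is shorter and more conceptual --- it never needs $\phi$, nor the nondegeneracy $\langle B_0[(u_0^0x+u_1^0)\chi_+e^{\nu_0\cdot}],\phi\rangle\neq 0$, and it explains the identity as the statement that the two branches can differ at first order only by a multiple of the translation mode, which carries no linearly growing term. Your closing caveats are the right ones, and both are indeed covered: the core remainders depend smoothly on $\mu$ as elements of $H^{2m}_{\mathrm{exp},0,-\nu_0+\eps}$ by the implicit function theorem constructions of Sections~\ref{s: pulled unfolding}--\ref{s: pushed unfolding}, and the explicit tail terms place $v$ in the weaker space where the kernel statement of Hypothesis~\ref{hyp: fredholm properties} applies, so dividing by $x\,e^{\nu_0 x}$ and letting $x\to\infty$ legitimately isolates the coefficient.
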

\begin{proof}
	 Starting with the solution $G^{\mathrm{ps}}(w(\mu); a(\mu), \sigma(\mu), \mu) = 0$ constructed above, we compute 
	\begin{multline}
		\partial_\mu F^{\mathrm{ps}}(w_0; 1, 0, 0) = c_\mathrm{lin}'(0) M \partial_x (u_- \chi_- + w_0 + u_0^0 \chi_+ e^{\nu_0 x} ) + B_0 (\partial_\mu w(0) + a'(0) u_0^0  \chi_+ e^{\nu^\mathrm{ps}(0, 0)\cdot} \\ + \partial_\mu u^\mathrm{ps}_-(0, 0) \chi_+ e^{\nu_0 \cdot} + \partial_\mu \nu^\mathrm{ps}_- (0, 0) u_0^0 x e^{\nu_0 \cdot}). 
	\end{multline}
	Projecting onto the cokernel, we find 
	\begin{multline}
		0 = \langle \partial_\mu F^{\mathrm{ps}}(w_0; 1, 0, 0), \phi \rangle = c_\mathrm{lin}'(0) \langle M \partial_x (u_- \chi_- + w_0 + u_0^0 \chi_+ e^{\nu_0 x}), \phi \rangle + \langle B_0 (\partial_\mu w(0)), \phi \rangle \\ + a'(0) \langle B_0 (u_0^0 \chi_+ e^{\nu_0 \cdot}), \phi \rangle + \langle B_0 [ (\partial_\mu u^\mathrm{ps}_- (0, \sigma(0)) + \partial_\mu \nu_-^\mathrm{ps} (0, \sigma(0)) u_0^0 x) \chi_+ e^{\nu_0 \cdot}], \phi \rangle 
	\end{multline}
	Note that $\langle B_0 (\partial_\mu w(0)), \phi \rangle = \langle B_0 (u_0^0 \chi_+ e^{\nu_0 \cdot}), \phi \rangle = 0$ by Lemma~\ref{l: projections}. Using Lemma~\ref{l: pushed saddle node} to compute $\partial_\mu u_-^\mathrm{ps}(\mu, \sigma(\mu))|_{\mu = 0}$ and $\partial_\mu \nu^\mathrm{ps}_- (\mu, \sigma(\mu))|_{\mu = 0}$, we simplify to 
	\begin{multline}
		0 = c_\mathrm{lin}'(0) \langle M \partial_x (u_- \chi_- + w_0 + u_0^0 \chi_+ e^{\nu_0 x}), \phi \rangle \\ + \langle B_0 [ (u_0'(0) - \sqrt{d_{10} d_{02}^{-1} \nu_0 } (\langle u_0^0, u_1^0 \rangle u_0^0 + u_1^0) \sigma'(0) + \nu_*'(0) u_0^0 x - \sqrt{d_{10} d_{02}^{-1} \nu_0 } \sigma'(0) u_0^0 x) \chi_+ e^{\nu_0 \cdot}, \phi \rangle. 
	\end{multline}
	Noting again that $\langle B_0 (u_0^0 \chi_+ e^{\nu_0 \cdot}), \phi \rangle = 0$, we obtain 
	\begin{align}
		\sigma'(0) = \frac{c_\mathrm{lin}'(0) \langle M \partial_x (u_- \chi_- + w_0 + u_0^0 \chi_+ e^{\nu_0 x}), \phi \rangle + \langle B_0 (u_0'(0) \chi_+ e^{\nu_0 \cdot}), \phi \rangle + \nu_*'(0) \langle B_0 (u_0^0 x), \phi \rangle }{\sqrt{d_{10} d_{02}^{-1} \nu_0 } \langle B_0 [(u_0^0 x +u_1^0) \chi_+ e^{\nu_0 \cdot}], \phi \rangle}.
	\end{align}
	
	To complete the proof, we compute $\alpha'(0)$ from the analysis of Section~\ref{s: pulled unfolding} and compare the two expressions. We recall the definition from Section~\ref{s: pulled unfolding}
	\begin{align}
		G^\mathrm{pl} (w; \alpha, \beta, \mu) = \begin{pmatrix}
			F^\mathrm{pl} (w; \alpha, \beta, \mu) \\
			\langle w, e_0 \rangle - \langle w_0, e_0 \rangle, 
		\end{pmatrix}
	\end{align}
	where 
	\begin{align}
		F^\mathrm{pl} (w; \alpha, \beta, \mu) = (\mathcal{P}(\partial_x) + c_\mathrm{lin}(\mu) M \partial_x) (u_- \chi_- + w + \chi_+ \psi) + f(u_- \chi_- + w + \chi_+ \psi; \mu) = 0,
	\end{align}
	with $\psi$ given by \eqref{e: pulled unfolding psi def}. From Section~\ref{s: pulled unfolding}, we have a solution $F^\mathrm{pl}(w(\mu); \alpha(\mu), \beta(\mu), \mu) = 0$ for $\mu$ small.  We compute 
	\begin{multline}
		\partial_\mu F^\mathrm{pl}(w(\mu); \alpha(\mu), \beta(\mu), \mu)|_{\mu = 0} = c_\mathrm{lin}'(0) M \partial_x (u_- \chi_- + w_0 + \chi_+ u_0^0 e^{\nu_0 \cdot} ) \\ + B_0 (\partial_\mu w(0) + \chi_+ \partial_\mu \psi(\cdot; \alpha(\mu), \beta(\mu), \mu)|_{\mu = 0} ).
	\end{multline}
	We compute from \eqref{e: pulled unfolding psi def}
	\begin{align}
		\partial_\mu \psi(x; \alpha(\mu), \beta(\mu), \mu)|_{\mu = 0} = [\alpha'(0)(u_0^0 x + u_1^0) + \beta'(0) u_0^0 + u_0'(0)] e^{\nu_0 x} + u_0^0 \nu_*'(0) x e^{\nu_0 x}
	\end{align}
	Projecting onto the span of $\phi$ and using that $\langle B_0 (\partial_\mu w(0)), \phi \rangle = \langle B_0 (u_0^0 \chi_+ e^{\nu_0 \cdot}), \phi \rangle = 0$, we obtain
	\begin{multline}
		0 = c_\mathrm{lin}'(0) \langle M \partial_x (u_- \chi_- + w_0 + \chi_+ u_0^0 e^{\nu_0 \cdot} ), \phi \rangle + \alpha'(0) \langle B_0 [(u_0^0 x + u_1^0) \chi_+ e^{\nu_0 \cdot}], \phi \rangle \\ + \langle B_0 (u_0'(0) \chi_+ e^{\nu_0 \cdot}), \phi \rangle + \nu_*'(0) \langle B_0 (u_0^0 x \chi_+e^{\nu_0 \cdot}), \phi \rangle.
	\end{multline}
	Solving for $\alpha'(0)$, we obtain
	\begin{align}
		\alpha'(0) &= - \frac{c_\mathrm{lin}'(0) \langle M \partial_x (u_- \chi_- + w_0 + \chi_+ u_0^0 e^{\nu_0 \cdot} ) + \langle B_0 (u_0'(0) \chi_+ e^{\nu_0 \cdot}), \phi \rangle + \nu_*'(0) \langle B_0 (u_0^0 x \chi_+e^{\nu_0 \cdot}), \phi \rangle}{\langle B_0 [(u_0^0 x + u_1^0)\chi_+ e^{\nu_0 \cdot}], \phi \rangle} \\
		&= - \sqrt{d_{10}d_{02}^{-1} \nu_0} \sigma'(0).
	\end{align}
	Since $-\sqrt{d_{10} d_{02}^{-1} \nu_0} < 0$, this completes the proof of the proposition. 
\end{proof}

\begin{proof}[Proof of Theorem~\ref{t: pulled pushed unfolding} --- conclusion]
	By Proposition~\ref{p: alpha sigma relation}, the pushed speed is given by 
	\begin{align}
		c_\mathrm{ps}(\mu) = c_\mathrm{lin}(\mu) + \sigma(\mu)^2 &= c_\mathrm{lin}(\mu) + \sigma'(0)^2 \mu^2 + \mathrm{O}(\mu^3) \\
		&= c_\mathrm{lin}(\mu) + \frac{\alpha'(0)^2}{d_{10} d_{02}^{-1} \nu_0} \mu^2 + \mathrm{O}(\mu^3),
	\end{align}
	as desired. 
\end{proof}

\section{Stability criteria}\label{s: pulled stability}

\subsection{Spectral stability of pulled fronts}
We prove the claims on the linearization $\mathcal{L}_\mathrm{pl}$ in Theorem~\ref{t: pulled pushed stability}. The claims on the essential spectrum follow immediately from the dispersion relation, noting that the Fredholm borders are given through roots of $d_c(\lambda,ik)=0$. In order to trace point spectrum we prepare with the following observation.



\begin{corollary}\label{c: pulled stability leading edge solutions}
	The linearization in the leading edge
	\begin{align}
		(\mathcal{P}(\partial_x) + c_\mathrm{lin}(\mu) M \partial_x + f_u (0; \mu) - M \lambda) u = 0 
	\end{align}
	has solutions 
	\begin{align}
		u(x; \mu, \gamma) = u^\pm_\mathrm{pl} (\mu, \gamma) e^{\nu^\mathrm{pl}_\pm (\mu, \gamma) x}
	\end{align}
	for some vectors $u^\pm_\mathrm{pl}(\mu, \gamma) \in \C^n$ for $\mu$ and $\gamma$ sufficiently small. Furthermore, $u^\pm_\mathrm{pl} (0, 0) = u_0^0$, and $\partial_\gamma u^-_\mathrm{pl} (0, 0) = \partial_\gamma \nu^\mathrm{pl}_- (0, 0) (\langle u_0^0, u_1^0 \rangle u_0^0 + u_1^0)$. 
\end{corollary}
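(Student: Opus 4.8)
The plan is to recognize Corollary~\ref{c: pulled stability leading edge solutions} as a direct transcription of Lemma~\ref{l: saddle node} into the language of exponential solutions of the constant-coefficient leading-edge equation. The key observation I would make first is that a pure exponential ansatz $u(x) = v\, e^{\nu x}$ with $v \in \C^n$ reduces the leading-edge ODE to an algebraic kernel condition. Indeed, since $\mathcal{P}(\partial_x)(v e^{\nu x}) = \mathcal{P}(\nu) v e^{\nu x}$ and $M\partial_x (v e^{\nu x}) = \nu M v e^{\nu x}$, substituting the ansatz into
\[
(\mathcal{P}(\partial_x) + c_\mathrm{lin}(\mu) M \partial_x + f_u (0; \mu) - M \lambda) u = 0
\]
collapses it to $A(\lambda, \nu, c_\mathrm{lin}(\mu); \mu) v = 0$, with $A$ the symbol from \eqref{e:sym}. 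Thus finding leading-edge exponential solutions near the double root is precisely the problem of finding kernel vectors of $A(\lambda, \nu, c_\mathrm{lin}(\mu); \mu)$ for $\nu$ near $\nu_0$ and $\lambda = \gamma^2$ near zero.

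Second, I would invoke Lemma~\ref{l: saddle node} verbatim, with the same branch parameter $\gamma = \sqrt{\lambda}$ (branch cut along the negative real axis). That lemma furnishes exactly the two branches $(\nu^\mathrm{pl}_\pm(\mu,\gamma), u^\infty_\pm(\mu,\gamma))$ of solutions to $A v = 0$, so I would simply set $u^\pm_\mathrm{pl} := u^\infty_\pm$ and retain $\nu^\mathrm{pl}_\pm$ as named. The desired leading-edge solutions are then $u^\pm_\mathrm{pl}(\mu,\gamma)\, e^{\nu^\mathrm{pl}_\pm(\mu,\gamma) x}$. Both asymptotic claims I would then read directly off the explicit expansions in that lemma: the leading order $u^\infty_\pm(\mu,0) = u_0(\mu)$ specializes at $\mu=0$ to the double-root eigenvector $u_0(0) = u_0^0$ of Hypothesis~\ref{hyp: double root}, giving $u^\pm_\mathrm{pl}(0,0) = u_0^0$. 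For the derivative claim, differentiating the $-$ branch expansion at $\gamma = 0$, $\mu=0$ yields $\partial_\gamma u^-_\mathrm{pl}(0,0) = -\sqrt{-d_{10}d_{02}^{-1}}(\langle u_0^0, u_1^0\rangle u_0^0 + u_1^0)$ and $\partial_\gamma \nu^\mathrm{pl}_-(0,0) = -\sqrt{-d_{10}d_{02}^{-1}}$, so that the common factor $-\sqrt{-d_{10}d_{02}^{-1}}$ cancels and the identity $\partial_\gamma u^-_\mathrm{pl}(0,0) = \partial_\gamma \nu^\mathrm{pl}_-(0,0)(\langle u_0^0, u_1^0\rangle u_0^0 + u_1^0)$ follows.

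I do not expect any genuine analytic obstacle here; the corollary is essentially a restatement of the saddle-node lemma. The only point requiring care is consistency of branch and sign conventions: I must ensure the $\gamma = \sqrt{\lambda}$ used in the statement matches the one in Lemma~\ref{l: saddle node}, so that the $-$ branch eigenvalue and eigenvector carry the same sign of $\sqrt{-d_{10}d_{02}^{-1}}$ and the common factor cancels cleanly in the ratio. Once that bookkeeping is fixed, the proof is complete.
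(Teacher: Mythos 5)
Your proposal is correct and takes exactly the same route as the paper, whose entire proof reads ``This follows directly from Lemma~\ref{l: saddle node}''; you simply make explicit the exponential-ansatz reduction to $A(\lambda,\nu,c_\mathrm{lin}(\mu);\mu)v=0$ and the cancellation of the common factor $-\sqrt{-d_{10}d_{02}^{-1}}$ in the derivative identity, both of which are accurate readings of that lemma's expansions.
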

\begin{proof}
	This follows directly from Lemma~\ref{l: saddle node}. 
\end{proof}

Let
\begin{align}
	B_\mathrm{pl}(\mu) = \mathcal{P}(\partial_x) + c_\mathrm{lin}(\mu) M \partial_x + f_u (q_\mathrm{pl}(\cdot; \mu); \mu) 
\end{align}
denote the linearization about a pulled front. Note that for $\gamma$ small with $\Re \gamma > 0$, we have $\Re \nu^\mathrm{pl}_- (\nu, \gamma) < \Re \nu_\mathrm{lin}(\mu)$. To capture eigenfunctions with the appropriate localization, we therefore insert the ansatz
\begin{align}
	u(x) = w(x) + \beta u^-_\mathrm{pl} (\mu, \gamma) \chi_+ (x) e^{\nu^\mathrm{pl}_-(\mu, \gamma) x}
\end{align}
into the eigenvalue equation $(B_\mathrm{pl}(\mu) - \gamma^2 M) u = 0$. Letting $P$ denote the orthogonal projection onto the range of $B_\mathrm{pl}(0) = B_0$ as in Section~\ref{s: preliminaries}, we then decompose the resulting equation as 
	\begin{align}
		\begin{cases}
			P(B_\mathrm{pl}(\mu) - \gamma^2 M) \left(w + \beta u^-_\mathrm{pl}(\mu, \gamma) \chi_+ e^{\nu^\mathrm{pl}_-(\mu, \gamma) \cdot } \right) &= 0, \\
			\left\langle (B_\mathrm{pl}(\mu) - \gamma^2 M) \left(w + \beta u^-_\mathrm{pl}(\mu, \gamma) \chi_+ e^{\nu^\mathrm{pl}_-(\mu, \gamma) x} \right), \phi \right\rangle &= 0. 
		\end{cases}\label{e: pulled eigenvalue system}
	\end{align}
The following lemma states that eigenvalues of $\mcl_\mathrm{pl}(\mu)$ in a neighborhood of the origin are completely characterized by solutions to \eqref{e: pulled eigenvalue system}. The proof is identical to that of \cite[proof of Proposition 5.11, step 6]{PoganScheel}.
\begin{lemma}\label{l: pulled eigenvalue reduction}
	The equation $(B_\mathrm{pl}(\mu) - \lambda M) u = 0$ has a solution $u \in L^2_{\mathrm{exp}, 0, -\nu_\mathrm{lin}(\mu)}$ with $\lambda = \gamma^2, \Re \gamma > 0$ if and only if \eqref{e: pulled eigenvalue system} has a solution with $\Re \gamma \geq 0$.
\end{lemma}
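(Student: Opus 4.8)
The plan is to carry out the standard far-field/core reduction, following the structure of \cite[Proposition~5.11, step 6]{PoganScheel}. The structural input is the splitting provided by Hypothesis~\ref{hyp: fredholm properties}: on the core codomain $L^2_{\mathrm{exp},0,\eta_0}$ with $\eta_0 = -\nu_0+\eps$, we have the direct sum $L^2_{\mathrm{exp},0,\eta_0} = \rg(B_0)\oplus\spn(\phi)$, so that a residual $r\in L^2_{\mathrm{exp},0,\eta_0}$ vanishes if and only if $Pr=0$ and $\langle r,\phi\rangle=0$. Since $q_\mathrm{pl}(\cdot;\mu)\to 0$ exponentially as $x\to+\infty$, the operator $B_\mathrm{pl}(\mu)-\lambda M$ differs from the constant-coefficient leading-edge operator $B_\infty(\mu)-\lambda M$, where $B_\infty(\mu)=\mathcal{P}(\partial_x)+c_\mathrm{lin}(\mu)M\partial_x + f_u(0;\mu)$, only by an exponentially localized multiplication by $f_u(q_\mathrm{pl};\mu)-f_u(0;\mu)$; the spatial modes of $B_\infty(\mu)-\lambda M$ near $\nu_0$ are exactly the $u^\pm_\mathrm{pl}e^{\nu^\mathrm{pl}_\pm\cdot}$ of Corollary~\ref{c: pulled stability leading edge solutions}. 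Throughout I write $\lambda=\gamma^2$.

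For the ``if'' direction, I would take a solution $(w,\beta)$ of \eqref{e: pulled eigenvalue system} and set $u = w + \beta u^-_\mathrm{pl}(\mu,\gamma)\chi_+ e^{\nu^\mathrm{pl}_-(\mu,\gamma)\cdot}$. The key computation is that the residual $r = (B_\mathrm{pl}(\mu)-\lambda M)u$ lies in the core space $L^2_{\mathrm{exp},0,\eta_0}$: the far-field term $u^-_\mathrm{pl}e^{\nu^\mathrm{pl}_-\cdot}$ solves $(B_\infty(\mu)-\lambda M)u=0$ exactly by Corollary~\ref{c: pulled stability leading edge solutions}, so the only surviving contributions to $r$ come from the compactly supported commutator $[\mathcal{P}(\partial_x)+c_\mathrm{lin}(\mu)M\partial_x,\chi_+]$ and from the exponentially localized potential $f_u(q_\mathrm{pl};\mu)-f_u(0;\mu)$ acting on the $\nu^\mathrm{pl}_-$-tail, both decaying faster than $e^{\nu_0 x}$ for $\eps$ small. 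The two equations of \eqref{e: pulled eigenvalue system} read precisely $Pr=0$ and $\langle r,\phi\rangle=0$, so the splitting forces $r=0$, i.e.\ $(B_\mathrm{pl}(\mu)-\lambda M)u=0$. For $\Re\gamma>0$ we have $\Re\nu^\mathrm{pl}_-<\nu_\mathrm{lin}(\mu)$ (Lemma~\ref{l: saddle node}), so $u\in L^2_{\mathrm{exp},0,-\nu_\mathrm{lin}(\mu)}$ is a genuine eigenfunction; at $\Re\gamma=0$ the same construction yields a bounded resonance rather than an $L^2$ eigenfunction, which accounts for the asymmetric roles of $\Re\gamma>0$ and $\Re\gamma\geq0$ in the statement.

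For the ``only if'' direction, I would start from $u\in L^2_{\mathrm{exp},0,-\nu_\mathrm{lin}(\mu)}$ solving $(B_\mathrm{pl}(\mu)-\lambda M)u=0$ and analyze its behavior as $x\to+\infty$ via exponential dichotomies for the first-order reformulation of $B_\infty(\mu)-\lambda M$. Membership in the weighted space excludes every spatial mode with $\Re\nu\geq\nu_\mathrm{lin}(\mu)$; for $\Re\gamma>0$ this removes the $\nu^\mathrm{pl}_+$ mode, and by Hypothesis~\ref{hyp: fredholm properties} there are no further roots of $d_{c_\mathrm{lin}}(\lambda,\nu;\mu)=0$ on the line $\Re\nu=\nu_\mathrm{lin}(\mu)$. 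Hence the slowest admissible tail of $u$ is $\beta u^-_\mathrm{pl}e^{\nu^\mathrm{pl}_-\cdot}$ for a unique $\beta$, and $w := u - \beta u^-_\mathrm{pl}\chi_+ e^{\nu^\mathrm{pl}_-\cdot}$ decays faster than $e^{\nu_0 x}$, so $w\in H^{2m}_{\mathrm{exp},0,\eta_0}$. Applying $P$ and $\langle\cdot,\phi\rangle$ to $(B_\mathrm{pl}(\mu)-\lambda M)u=0$ then recovers \eqref{e: pulled eigenvalue system} for $(w,\beta)$.

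The main obstacle is the asymptotic extraction in the ``only if'' direction: justifying rigorously that a weighted-$L^2$ solution decays precisely along the $\nu^\mathrm{pl}_-$-direction with remainder in the stronger core weight, which requires the exponential-dichotomy machinery for the first-order system together with the non-resonance condition at $\Re\nu=\nu_\mathrm{lin}(\mu)$ supplied by Hypothesis~\ref{hyp: fredholm properties}. The secondary delicate point is the boundary $\Re\gamma=0$, where $\nu^\mathrm{pl}_-$ becomes marginal: there the far-field/core construction still solves \eqref{e: pulled eigenvalue system} but produces a bounded resonance rather than a genuine eigenfunction, which is exactly why the statement pairs $\Re\gamma>0$ on the eigenvalue side with $\Re\gamma\geq0$ on the system side.
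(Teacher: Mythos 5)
Your proposal is correct and takes essentially the same route as the paper: the paper's ``proof'' is a one-line deferral to \cite[proof of Proposition 5.11, step 6]{PoganScheel}, and your argument --- the Fredholm splitting $L^2_{\mathrm{exp},0,\eta_0}=\rg(\mcl_0)\oplus\spn(\phi)$ to pass between solutions of \eqref{e: pulled eigenvalue system} and exact solutions of the eigenvalue problem, together with exponential-dichotomy asymptotics to extract the $\nu^{\mathrm{pl}}_-$ tail of a weighted-$L^2$ eigenfunction with remainder in the core space --- is precisely that far-field/core reduction written out, with the two genuinely delicate steps correctly identified. Your reading of the $\Re\gamma>0$ versus $\Re\gamma\geq 0$ asymmetry (solutions of the reduced system with $\Re\gamma=0$ produce bounded resonances rather than $L^2$ eigenfunctions) also matches how the paper itself uses the lemma, namely to detect an ``eigenvalue (or more precisely a resonance pole)'' at $\lambda=\gamma^2$.
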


At $\mu = \gamma = 0$, the system \eqref{e: pulled eigenvalue system} has a solution resulting from the translational mode $q_0'(x) \sim \nu_0 u_0^0 e^{\nu_0 x}$, given by
\begin{align}
	w_0 = \frac{\beta}{\nu_0} (q_0' - \nu_0 u_0^0 \chi_+ e^{\nu_0 \cdot})
\end{align}
for any $\beta \in \C$. The linearization in $w$ (in the space $H^{2m}_{\mathrm{exp}, 0, \eta_0}$) of the first equation in \eqref{e: pulled eigenvalue system} about this solution is $P B_0$, which is invertible by construction, so we can use the implicit function theorem to solve the first equation for $w(\beta; \mu, \gamma)$ for $\mu$ and $\gamma$ small. Linearity in $\beta$ and the uniqueness of the solution found with the implicit function theorem then guarantees that we can write $w(\beta; \mu, \gamma) = \beta \tilde{w}(\mu, \gamma)$ for some $\tilde{w}(\mu, \gamma) \in H^{2m}_{\mathrm{exp}, 0, \eta_0}$ with
\begin{align}
	\tilde{w}(0,0) = \frac{1}{\nu_0} (q_0' - \nu_0 u_0^0 \chi_+ e^{\nu_0 \cdot}). \label{e: pulled stability tilde w0 expression}
\end{align}
The second equation in \eqref{e: pulled eigenvalue system} may therefore be reduced to 
\begin{align}
	E(\mu, \gamma) := \left \langle (B_\mathrm{pl}(\mu) - \gamma^2 M) \left(\tilde{w}(\mu, \gamma) + u^-_\mathrm{pl}(\mu, \gamma) \chi_+ e^{\nu^\mathrm{pl}_- (\mu, \gamma) \cdot} \right), \phi \right\rangle = 0. 
\end{align}
The linearization $B_\mathrm{pl}(\mu)$ therefore has an eigenvalue (or more precisely a resonance pole) at $\lambda = \gamma^2$ if and only if $E(\mu, \gamma) = 0$, by Lemma~\ref{l: pulled eigenvalue reduction}. Note that $E(0, 0) = 0$ as a consequence of Lemma~\ref{l: projections}. To track how this zero perturbs for $\mu, \gamma \approx 0$, we expand $E(\mu, \gamma)$ to leading order in $\mu$ and $\gamma$. We compute
\begin{align}
	\partial_\gamma E(0, 0) = \left\langle B_0 \left( \partial_\gamma \tilde{w}(0,0) + (\partial_\gamma u^-_\mathrm{pl}(0, 0) \chi_+ e^{\nu_0 \cdot} + u_0^0 \partial_\gamma \nu^\mathrm{pl}_- (0, 0) x) \chi_+ e{^{\nu_0 \cdot}} \right), \phi \right\rangle . 
\end{align}
Note that $\langle B_0 (\partial_\gamma \tilde{w}(0,0)), \phi \rangle = \langle \partial_\gamma \tilde{w}(0,0), B_0^* \phi \rangle = 0$ due to the strong exponential localization of $\tilde{w}$. Together with Corollary~\ref{c: pulled stability leading edge solutions}, we then have 
\begin{align}
	\partial_\gamma E(0,0) &= \partial_\gamma \nu_-^\mathrm{pl}(0,0) \langle B_0 [(u_1^0 + u_0^0 x) \chi_+ e^{\nu_0 x}], \phi \rangle  + \partial_\gamma \nu_-^\mathrm{pl}(0, 0) \langle u_0^0, u_1^0 \rangle \langle B_0 (u_0^0 \chi_+ e^{\nu_0 x}), \phi \rangle \\
	&= \partial_\gamma \nu_-^\mathrm{pl}(0,0) \langle B_0 [(u_1^0 + u_0^0 x) \chi_+ e^{\nu_0 x}], \phi \rangle,  \label{e: pulled stability gamma derivative}
\end{align}
 since $\langle B_0 (u_0^0 \chi_+ e^{\nu_0 x}), \phi \rangle = 0$ by Lemma~\ref{l: projections}. Also by Lemma~\ref{l: projections}, the remaining term on the right hand side is non-zero, and so $\partial_\gamma E(0, 0) \neq 0$. In particular, we can solve the equation $E(\mu, \gamma) = 0$ for $\gamma(\mu)$ with the implicit function theorem. To track $\gamma(\mu)$, we now expand $E(\mu,\gamma)$ in $\mu$. We will use the following auxiliary result.
\begin{lemma}\label{l: pulled stability mu derivative}
	We have 
	\begin{align}
		(\partial_\mu B_\mathrm{pl}(\mu)) q_\mathrm{pl} ' (x; \mu) = -B_\mathrm{pl}(\mu) \partial_\mu q_\mathrm{pl}'(x; \mu). 
	\end{align}
\end{lemma}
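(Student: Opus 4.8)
The plan is to exploit translation invariance of the traveling-wave equation together with differentiation in the parameter. First I would recall that the pulled front $q_\mathrm{pl}(\cdot;\mu)$ is, for each small $\mu$, a stationary solution of the equation in the comoving frame,
\begin{align*}
  (\mathcal{P}(\partial_x) + c_\mathrm{lin}(\mu) M \partial_x) q_\mathrm{pl}(x;\mu) + f(q_\mathrm{pl}(x;\mu);\mu) = 0,
\end{align*}
and that by the construction in Section~\ref{s: pulled unfolding} (via the implicit function theorem) the map $\mu \mapsto q_\mathrm{pl}(\cdot;\mu)$ is smooth into the relevant function space, so all of the $\mu$-derivatives below are justified.

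The key observation is that $q_\mathrm{pl}'$ lies in the kernel of $B_\mathrm{pl}(\mu)$ for every $\mu$. Indeed, differentiating the traveling-wave equation in $x$ and using that $\mathcal{P}(\partial_x)$ and $c_\mathrm{lin}(\mu) M \partial_x$ commute with $\partial_x$ gives
\begin{align*}
  (\mathcal{P}(\partial_x) + c_\mathrm{lin}(\mu) M \partial_x) q_\mathrm{pl}'(x;\mu) + f_u(q_\mathrm{pl}(x;\mu);\mu) q_\mathrm{pl}'(x;\mu) = 0,
\end{align*}
which is exactly the statement $B_\mathrm{pl}(\mu) q_\mathrm{pl}'(\cdot;\mu) = 0$ for all $\mu$ near $0$.

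From here the lemma is obtained by differentiating this identity in $\mu$. Since $B_\mathrm{pl}(\mu)$ depends on $\mu$ through both $c_\mathrm{lin}(\mu)$ and $f_u(q_\mathrm{pl}(\cdot;\mu);\mu)$, the product rule applied to $B_\mathrm{pl}(\mu) q_\mathrm{pl}'(\cdot;\mu) \equiv 0$ yields
\begin{align*}
  (\partial_\mu B_\mathrm{pl}(\mu)) q_\mathrm{pl}'(x;\mu) + B_\mathrm{pl}(\mu) \partial_\mu q_\mathrm{pl}'(x;\mu) = 0,
\end{align*}
and rearranging gives the claimed equality. There is no substantive obstacle here; the only point requiring care is the interchange of $\partial_\mu$ and $\partial_x$ and the smooth $\mu$-dependence of $q_\mathrm{pl}$ and of the coefficients of $B_\mathrm{pl}(\mu)$, both of which follow from the $C^{2m}$-in-$x$, smooth-in-$\mu$ regularity established in the existence proof.
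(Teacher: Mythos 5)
Your proof is correct and follows exactly the paper's argument: differentiate the traveling-wave equation in $x$ to obtain $B_\mathrm{pl}(\mu) q_\mathrm{pl}'(\cdot;\mu) = 0$, then differentiate this identity in $\mu$ and apply the product rule. The additional remarks on smoothness in $\mu$ and interchanging $\partial_\mu$ with $\partial_x$ simply make explicit what the paper's two-line proof leaves implicit.
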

\begin{proof}
	Differentiating the traveling wave equation in space, we obtain $B_\mathrm{pl}(\mu) q_\mathrm{pl}'(\cdot; \mu) = 0$. Differentiating again in $\mu$ then implies the desired result. 
\end{proof}
With this lemma in hand, we now compute
\begin{align}
	\partial_\mu E(0,0) = \langle B_\mathrm{pl}'(0) \left( \tilde{w}(0, 0) + u_0^0 \chi_+ e^{\nu_0 \cdot} \right), \phi \rangle + \langle B_0 [(\partial_\mu u^-_\mathrm{pl}(0, 0) + u_0^0 \partial_\mu \nu_-^\mathrm{pl}(0, 0) x) \chi_+ e^{\nu_0 x} ], \phi \rangle \label{e: pulled stability mu expansion}
\end{align}
Using \eqref{e: pulled stability tilde w0 expression} and Lemma~\ref{l: pulled stability mu derivative}, we obtain
\begin{align}
	\langle B_\mathrm{pl}'(0) (\tilde{w}(0, 0) + u_0^0 \chi_+ e^{\nu_0 \cdot}), \phi \rangle = \frac{1}{\nu_0} \langle B_\mathrm{pl}'(0) q_0', \phi \rangle = - \frac{1}{\nu_0} \langle B_0 \partial_\mu q_\mathrm{pl}'(\cdot; 0), \phi \rangle \label{e: pulled stability d mu terms}
\end{align}
Using the asymptotics for $q_\mathrm{pl}(x; \mu)$ for large $x$ from Theorem~\ref{t: pulled pushed unfolding} (recall also $u_0(\mu)$ and $\nu_\mathrm{lin}(\mu)$ defined there), we see that
\begin{align}
	\partial_\mu \partial_x q_\mathrm{pl} (x; 0) = \left[\alpha'(0) u_0^0 + \nu_0 \left( \alpha'(0) (u_0^0 x + u_1^0) + \beta'(0) u_0^0 + u_0'(0) + x \nu_\mathrm{lin}'(0) u_0^0 \right) \right] e^{\nu_0 x} + \tilde{Q} (x)
\end{align}
recalling that $\beta(0) = 1$ and $\alpha(0) = 0$. The error term $\tilde{Q}$ is exponentially localized on the left and decays faster than $e^{(\nu_0 - \eps) x}$ as $x \to +\infty$, and so in particular $\langle B_0 \tilde{Q}, \phi \rangle = \langle \tilde{Q}, B_0^* \phi \rangle = 0$. Also, by Lemma~\ref{l: projections}, we have 
\begin{align}
	\langle B_0 [(\alpha'(0) + \beta'(0)) u_0^0 e^{\nu_0 \cdot} ], \phi \rangle = (\alpha'(0) + \beta'(0)) \langle B_0 (u_0^0 e^{\nu_0 \cdot}), \phi \rangle = 0. \label{e: pulled stability simplify}
\end{align}
Hence, we obtain the simplification
\begin{align*}
	-\frac{1}{\nu_0} \langle B_0 \partial_\mu q_\mathrm{pl}'(\cdot; 0), \phi \rangle = - \alpha'(0) \langle B_0 [(u_0^0 x + u_1^0) e^{\nu_0 \cdot}, \phi \rangle - \langle B_0 [(u_0'(0) + x \nu_\mathrm{lin}'(0) u_0^0) e^{\nu_0 \cdot}, \phi \rangle.
\end{align*}
Returning to \eqref{e: pulled stability mu expansion}, note that $u^-_\mathrm{pl}$ and $\nu_-^\mathrm{pl}$ are smooth functions for which $u^-_\mathrm{pl}(\mu, 0) = u_0 (\mu)$ and $\nu_-^\mathrm{pl}(\mu, 0) = \nu_\mathrm{lin}(\mu)$, where $u_0 (\mu)$ and $\nu_\mathrm{lin}(\mu)$ are as in Theorem~\ref{t: pulled pushed unfolding}. Hence 
\begin{align*}
	\partial_\mu u^-_\mathrm{pl} (0, 0) = u_0'(0), \quad \partial_\mu \nu_-^\mathrm{pl}(0, 0) = \nu_\mathrm{lin}'(0). 
\end{align*}
Combining \eqref{e: pulled stability d mu terms} and \eqref{e: pulled stability simplify} with \eqref{e: pulled stability mu expansion}, we therefore obtain
\begin{align}
	\partial_\mu E(0,0) = - \alpha'(0) \langle B_0 [(u_0^0 x + u_1^0) e^{\nu_0 x}, \phi \rangle. \label{e: pulled stability mu derivative}
\end{align}
Combining with \eqref{e: pulled stability gamma derivative}, we find $E(\mu, \gamma(\mu)) = 0$ with 
\begin{align}
	\gamma(\mu) = \frac{\alpha'(0)}{\partial_\gamma \nu_-^\mathrm{pl}(0, 0)} \mu + \mathrm{O}(\mu^2). \label{e: gamma mu formula}
\end{align}

\begin{proof}[Proof of Theorem~\ref{t: pulled pushed stability}]
	By Lemma~\ref{l: pulled eigenvalue reduction}, and the above reduction, we have such a solution if and only if $\Re \gamma(\mu) \geq 0$, which occurs precisely when $\alpha'(0) \mu < 0$ since $\partial_\gamma\nu^\mathrm{pl}_-(0,0) < 0$. Since $E(\mu, \gamma)$ has precisely one root in a neighborhood of the origin, $\lambda(\mu) = \gamma(\mu)^2$ is the only resonance pole of $\mcl_\mathrm{pl}(\mu)$ for $\mu$ sufficiently small. The resonance pole $\lambda(\mu)$ is an unstable eigenvalue for $\alpha'(0) \mu < 0$ by \eqref{e: gamma mu formula}. 
\end{proof}

\subsection{Marginal stability of pushed fronts}
We first note that the essential spectrum is stable since the speed of pushed fronts is strictly larger than the linear spreading speed. A natural candidate for the point spectrum is the derivative of the front solution, contributing to the kernel of the linearization provided growth conditions at infinity are met. 
In our setting,  these growth conditions are met precisely when $e^{\nu_-(\mu, \sigma(\mu)) x}$ has steeper decay than $e^{\nu_\mathrm{lin}(\mu) x}$, which in turn occurs when $\sigma(\mu) > 0$.

\begin{proof}[Proof of Theorem~\ref{t: pulled pushed stability}]
	Note that $\mcl^\mathrm{ps}(\mu) (\omega_{0, -\nu_\mathrm{lin}(\mu)}q_\mathrm{ps}'(\cdot; \mu)) = 0$. By a stability analysis similar to that of Section~\ref{s: pulled stability}, this is the only solution to $(\mcl^\mathrm{ps}(\mu) - \lambda M) u = 0$ in a neighborhood of $\lambda = 0$. We therefore only need to check whether $\omega_{0, -\nu_\mathrm{lin}(\mu)}q_\mathrm{ps}'(\cdot; \mu) \in L^2 (\R)$. But since 
	\begin{align*}
		\omega_{0, -\nu_\mathrm{lin}(\mu)} q_\mathrm{ps}' (x; \mu) \sim e^{(-\nu_\mathrm{lin}(\mu)+\nu_-^\mathrm{ps}(\mu, \sigma(\mu)) x}
	\end{align*}
	
	 for $x$ large (and is exponentially localized on the left), this occurs precisely when $-\nu_\mathrm{lin}(\mu) + \nu_-^\mathrm{ps}(\mu, \sigma(\mu)) < 0$. By the expansion in Lemma~\ref{l: pushed saddle node}, this occurs precisely when $\sigma(\mu) > 0$. The result then follows from Proposition~\ref{p: alpha sigma relation}. 
\end{proof}

\section{Numerical continuation of pulled fronts and identification of the pushed-to-pulled transition}
Building on the analysis in the previous sections, we now turn our attention to developing a numerical continuation routine to efficiently continue pulled fronts, pushed fronts and pushed-to-pulled transitions.  We  approximate the infinite-domain problem studied until now with a boundary value problem in a large finite domain, using the same decomposition of the invasion front solution into a far-field element that captures the exact decay of the front and a more localized core function, for which we impose additional boundary conditions to reflect the negative Fredholm index of the linearization on the unbounded domain.  As shown in Sections \ref{s: pulled unfolding} and \ref{s: pushed unfolding}, the far-field term includes an explicit exponentially decaying term of the form $(\alpha x+\beta)e^{\nu x}$ (or $\beta e^{\nu x}$ in the pushed case) and the transition between pushed and pulled fronts occurs when $\alpha=0$. We  show that our method  efficiently and accurately locates this transition point by bench-marking our routine against examples where explicit transition points are known.  We then turn our attention to several systems of reaction-diffusion equations where explicit expressions for the transition value are not known.

\subsection{Numerical far-field-core decomposition}

We discuss our numerical strategy in the scalar case and point to modifications for systems for ease of exposition.

\paragraph{Pulled front continuation.}

Our aim is to locate and continue pulled fronts propagating with the linear spreading speed $c$ that are solutions of the traveling wave equation,
\begin{equation}\label{e:tw2}
    \mathcal{P}(\partial_x)u+cu_x+f(u;\mu)=0
.\end{equation}
We seek an approximate solution in the form of a far-field core decomposition,
\begin{equation} 
    u(x)=u_- \chi_-(x)+u_+\chi_+(x) +(\alpha x+\beta) e^{\nu x} \chi_+(x)+w(x)
,\label{eq:ffcore}
\end{equation}  
with cutoff functions $\chi_\pm$ of the form 
\[
\chi_+(x)=(1+e^{m x})^{-1},\qquad \chi_-(x)=1-\chi_+(x),\qquad m\sim 10,
\]
so that derivatives of $\chi_\pm$ are well resolved on our computational grid but derivatives essentially vanish near the boundary of the computational domain, $|x|\gtrsim 10$. 

We insert this ansatz with pre-computed derivatives for $\chi_\pm$ into \eqref{e:tw2} and subtract the identities
\begin{align*}
0&=[\mathcal{P}(\partial_x)+c\partial_x]u_\pm +f(u_\pm;\mu),\\
0&=\chi_+(x)[\mathcal{P}(\partial_x)+c\partial_x+f'(0;\mu)]((\alpha x+\beta)e^{\nu x}),
\end{align*}
Exploiting cancellations when subtracting can be somewhat tedious but helps with roundoff errors when $x$ is large. 

The resulting equation is of the form 
\begin{equation}\label{e:weqn}
0=\mathcal{P}(\partial_x)w+c\partial_x w + g(x,w,\alpha,\beta,\nu,c),
\end{equation}
and needs to be complemented with boundary conditions for $w$. For second-order $\mathcal{P}$ one can use Dirichlet or Neumann boundary conditions with little noticeable difference and we mostly use Neumann boundary conditions.  

We next discretize the interval $x\in[-L,L]$ with $N$ subintervals and $N$ centered grid points $X\in\R^{N}$. We use $N$ values of $W$ at grid points as variables, which gives a total of $N+7$ unknowns,
\[
    W\in\mathbb{R}^N, \quad u_-, u_+\in \mathbb{R}, c,\nu \in \mathbb{R}, \alpha, \beta \in\mathbb{R}, \mu\in\mathbb{R}
.\]
As equations, we require \eqref{e:weqn} to hold at the centered grid points, using finite-difference approximations of $\mathcal{P}$ and $\partial_x$. We thereby obtain $N$ equations, including the boundary conditions,
\begin{equation}\label{e:weqnd}
0=\mathcal{P}^N W+c D_1^N  W + G(X,W,\alpha,\beta,\nu,c), 
\end{equation}
We use  second or fourth order approximations of centered finite differences for $\mathcal{P}^N$ and $D_1^N$. The system \eqref{e:weqnd} is complemented with equations for $u_\pm$
\begin{equation}
\label{e:upm}
0=f(u_-;\mu),\qquad 0= f(u_+;\mu).
\end{equation}

The variables $c$ and $\nu$ are obtained from the dispersion relation, solving 
\begin{equation}
\label{e:disnum}
0 =  d(\nu,c,0;\mu) ,\qquad 
0 = \partial_\nu d(\nu,c,0;\mu),
\end{equation}
or some version of \eqref{e:dismatrix} in the case of systems. We finally add two conditions that account for the variables $\alpha$ and $\beta$. First, we wish to enforce exponential decay of $w$ faster than $e^{\nu x}$, which amounts to adding a transversality condition near the right boundary. In practice, we have observed that a variety of transversality conditions may be employed and we typically default to the condition that the core function satisfies $W_N+W_{N-1}=0$.  Second, our ansatz allows for translation invariance and we add a condition that pins the core to the center of the computational domain. Again, many of such phase conditions will work in practice, and we use a Gaussian. Altogether, we solve \eqref{e:weqnd}, \eqref{e:upm}, \eqref{e:disnum}, 
\begin{equation}\label{e:tr}
0=W_N+W_{N-1},
\end{equation}
and a discretized version of 
\begin{equation}\label{e:ph}
 0= \int_{-L}^L e^{-x^2}\left( u(x)-\frac{u_-+u_+}{2}\right) dx.
\end{equation}
This system of $N+6$ equations for $N+7$ variables, including the parameter $\mu$ can then be augmented by a secant condition to continue solutions and find in particular $\alpha(\mu)$. Detecting values where $\alpha(\mu)=0$ then gives the desired location of the pushed-to-pulled transitions. Adding an equation $\alpha=0$ and a second parameter $\mu_2$, we can similarly located curves of pushed-to-pulled transition in 2-parameter systems. 

Adaptations for systems are straightforward. For $u\in\R^n$, we have $W\in \R^{nN}$, $u_+,u_-\in\R^n$, $\alpha,\beta,c,\nu,\mu\in\R$ for a total of $n(N+2)+5$ variables.  The equations 
\eqref{e:weqnd} and
\eqref{e:upm} are cast for systems, yielding $n(N+2)$ equations, with an additional 4 equations from 
\eqref{e:disnum}, 
\eqref{e:tr}, and
\eqref{e:ph}. Of course, \eqref{e:disnum} could be replaced by \eqref{e:dismatrix}, which would yield the relevant eigenvectors $u_{0/1}(\mu)$ in the tail expansion \eqref{e: le}.

\paragraph{Pushed front continuation.}
The algorithm described above can easily be adapted to located and follow pushed fronts to pushed-to-pulled transitions with minimal modifications. Since far-field expansions should be purely exponential, the far-field ansatz simplifies to  $\chi_+(x)\beta e^{\nu x}$. On the other hand, $\nu$ is simply a root of the dispersion relation, so that we effectively only replace the second equation in \eqref{e:disnum}, $\partial_\nu d_{c}=0$, by the equation $\alpha=0$, otherwise retaining the system of equations 
\eqref{e:weqnd},
\eqref{e:upm},
\eqref{e:disnum},
\eqref{e:tr}, and
\eqref{e:ph}.
%
%
%

We note that pushed fronts can in this fashion be naturally continued through the pushed-to-pulled transition point to continue the family of traveling fronts found in Theorems \ref{t: pulled pushed unfolding}--\ref{t: pulled pushed stability}, whose spatial decay rates are weaker than the linear decay rate.  We illustrate this in the examples considered below. In order to understand exponential convergence rates, one quickly notices that errors from the truncation in the wake decrease with the gap between stable and unstable eigenvalues of the linearization at $u_-$. Errors from truncation in the leading edge contain contributions from first the linear ansatz $(\alpha x + \beta)e^{\nu x}$, neglecting for instance quadratic terms $\mathrm{O}(x^2 e^{2\nu x})$, and from the effect of boundary conditions, with errors related to the gap between the double root and next-nearest eigenvalues. Projecting errors onto the kernel of the adjoint, which grows with exponential rate $-\nu$ in the leading edge, predicts truncation errors with exponential $e^{\nu L}$ from the former contribution, and $e^{\delta_\nu L}$ with gap $\delta_\nu$ between the decay rate $\nu$ and the next nearest spatial eigenvalue.

\paragraph{Convergence aspects and comparisons to other methods.}
The approach to computing pushed and pulled fronts presented here can be compared to more direct methods for computing front speeds \cite{stegemerten,beyn_freeze} or to methods for computing heteroclinic orbits \cite{beyn_hetero,auto_hom}. 

A direct approach to finding wave speeds would be to study the invasion process in a finite domain, appropriately shifting the wave front such that the front interface remains located near the center of the domain, thus minimizing effects from boundaries \cite{beyn_freeze}. Rather than dynamically relaxing the dynamics, one could also employ a Newton method, using the wave speed as a Lagrange multiplier associated with a phase condition that pins the front interface in the center of the domain. One thus arrives at a boundary-value problem for the traveling-wave equation \eqref{e:tw2} together with a phase condition similar to \eqref{e:ph}, but enforced on the full profile $u$, and the wave speed $c$ as parameter. Continuation for this system was systematically used in \cite{stegemerten}, studying variants of the Allen-Cahn equation. The resulting convergence questions were discussed in \cite{adss}, demonstrating that speed and profile converge as $L\to\infty$ with rate $L^{-2}$ in the case of pulled fronts and with exponential rate in the case of pushed fronts. There does not appear to be a way intrinsic to such a method to determine whether a computed front profile is pulled or pushed. In fact, \cite{adss} gives examples of anomalous wave propagation processes where the computed speed converges to an incorrect limiting speed as $L\to\infty$. We comment below on some scenarios where our algorithm breaks down upon encountering such resonances leading to potential different modes of invasion. 

Fronts considered here are of course heteroclinic orbits to the traveling-wave equation, cast as a first-order dynamical system and there is a vast literature on computing such heteroclinic profiles, including algorithms that systematically detect bifurcations \cite{beyn_hetero,auto_hom}. Pushed fronts fall into a standard class of computation of codimension-1 heteroclinic orbits, in this case as intersections between an unstable and a strong stable manifold. Convergence for such heteroclinic orbits is exponential in the size of the domain, with rate for given by the gap between stable and unstable eigenvalues (or strong and weak stable eigenvalues in the leading edge) for the profile and twice that rate for the speed. Clearly, this gap converges to zero when the pushed front approaches the pushed-to-pulled transition, so that the rate of exponential convergence for standard algorithms for finding pushed fronts converges to zero. On the other hand, pulled front speeds are codimension-zero heteroclinic orbits, again with exponential convergence of profiles. The speed is of course best found directly from the dispersion relation.

We do not pursue a full analysis of convergence of our algorithm, here. We do demonstrate below that convergence of speeds is exponential with uniform rate across the pushed-to-pulled transition.

\subsection{Applications}

\paragraph{The Nagumo Equation.}
Perhaps the most familiar example of a pushed front occurs in the Nagumo equation
\begin{equation} u_t=u_{xx}+u(1-u)(\mu+u), \label{eq:nagumo} \end{equation}
see for example \cite{HadelerRothe}. One is interested in the regime $\mu>0$ where $u=0$ is unstable, and then studies the propagation of the stable state $u=1$ into this unstable background. 
For any $\mu>\frac{1}{2}$ the invasion process is  pulled in nature with a monotone front propagating at the linear spreading speed $2\sqrt{\mu}$.  At $\mu=\frac{1}{2}$ a pushed front bifurcates that propagates with speed $c=\sqrt{2}\left(\frac{1}{2}+\mu\right)$.  We note that it is only in this particular cubic nonlinearity (and in a cubic-quintic case) that one is able to explicitly locate the transition to pushed fronts. Known criteria that exclude pushed fronts, such as $f(u)\leq f'(0)u$, are not sharp. 

Our numerical results are illustrated in Figures~\ref{fig:nagumocore}-\ref{fig:nagumopushed}.

In Figure~\ref{fig:nagumocore}, we present an overview of our algorithm showing the far-field core decomposition and continuation of invasion fronts through the pushed-to-pulled transition.  The front is given as a sum of three terms: $\chi_-(x)$ which provides the stable state in the wake of the front (the black curve in Figure~\ref{fig:nagumocore}), $(\alpha x+\beta )e^{-x}$ which gives the far-field term (the red curve) and the core function $w(x)$ (the green curve).

\begin{figure}
    \centering
     \subfigure{\includegraphics[width=0.4\textwidth]{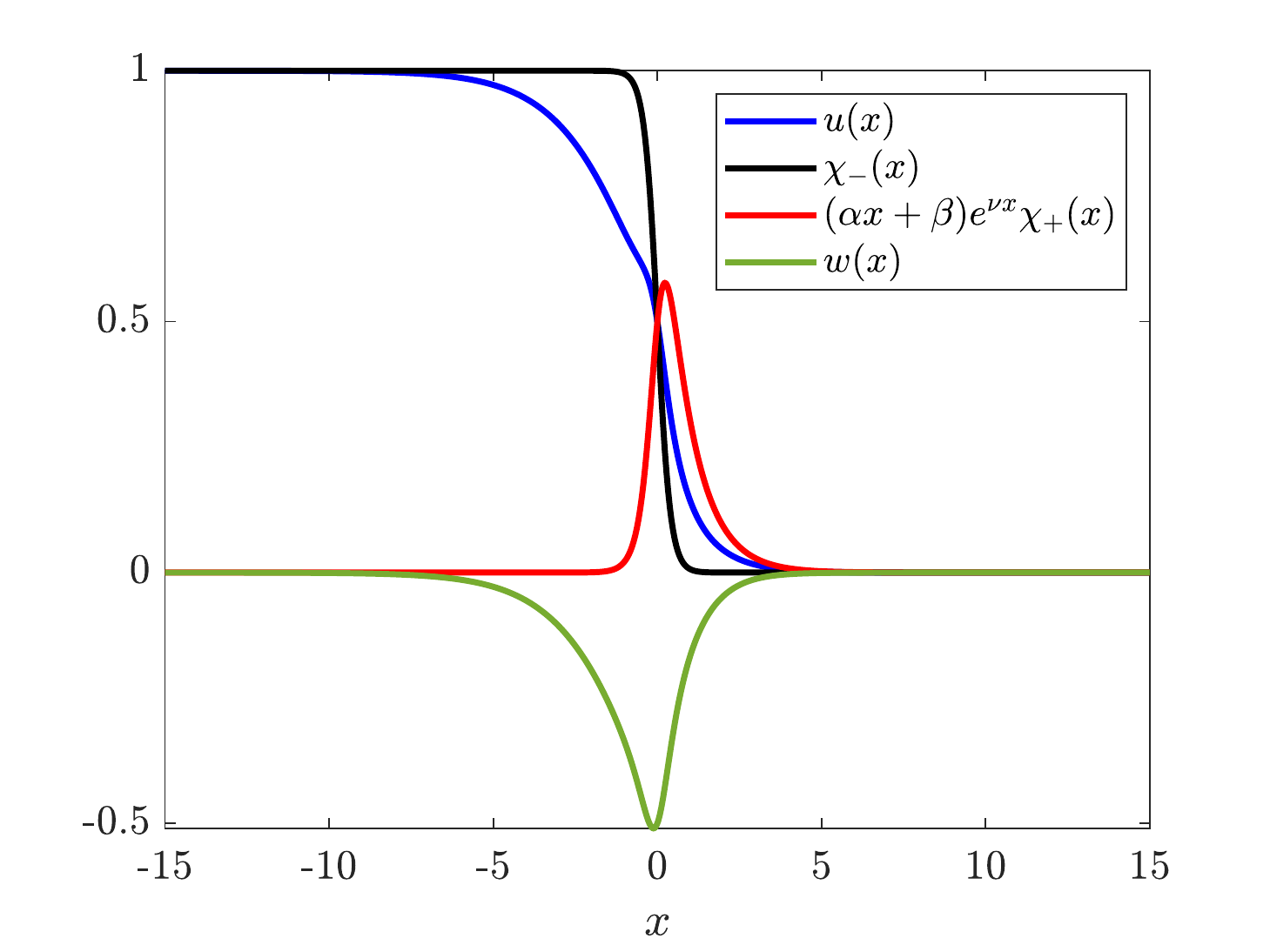}}\qquad	
  \subfigure{\includegraphics[width=0.4\textwidth]{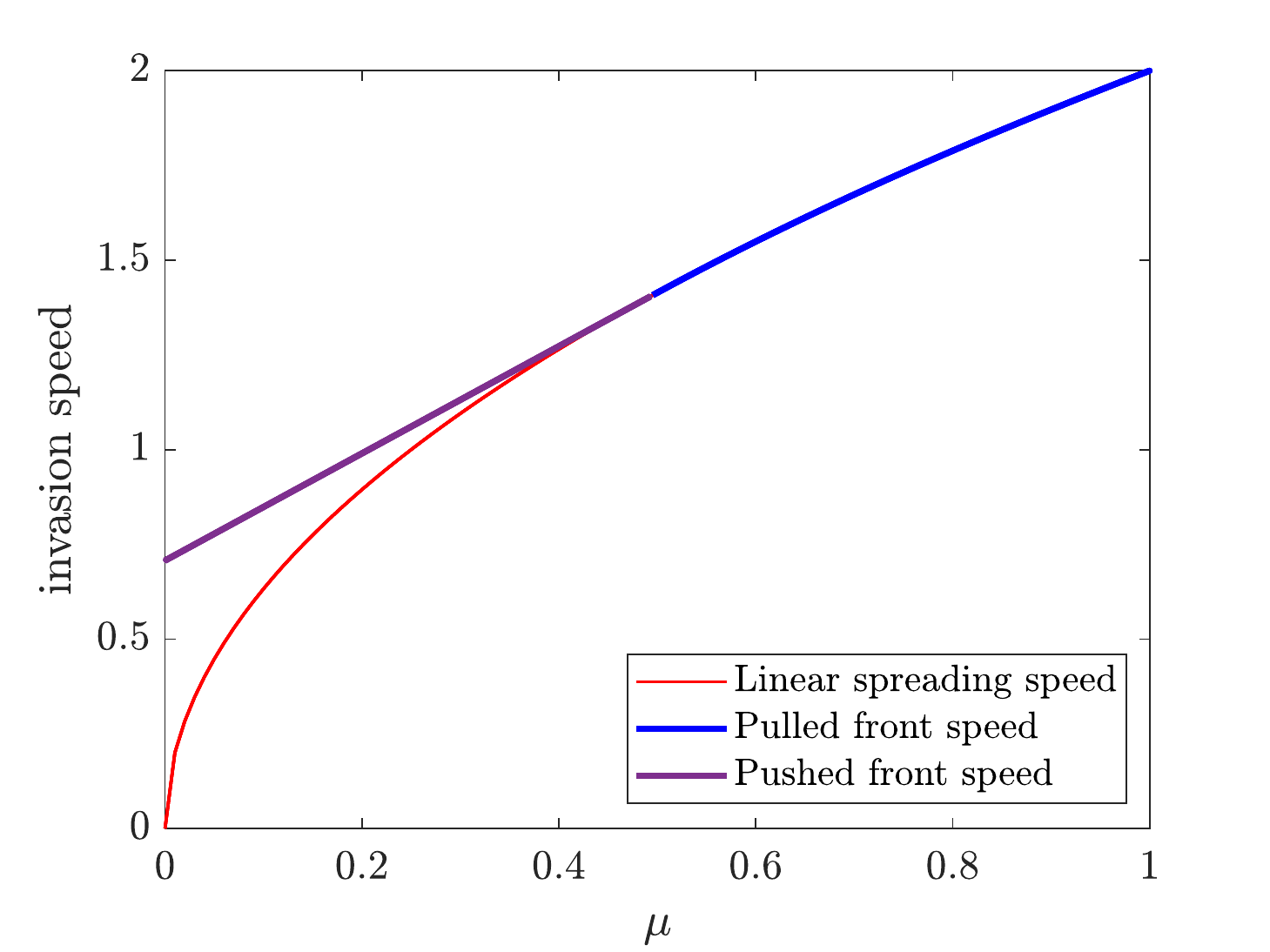}}
   \caption{ Left: Decomposition of the Nagumo front \eqref{eq:nagumo} according to \eqref{eq:ffcore}, with  full front solution (blue), exponential far-field component (red), localized  core (green) and left correction $\chi_-(x)$ (black).  Right: Pushed-to-pulled transition for (\ref{eq:nagumo}).  For $\mu>0.5$ the invasion front is pulled and propagates with the linear spreading speed (red);  at $\mu=0.5$ the invasion front transitions from pulled to pushed;  pushed invasion speed is hown for $\mu<0.5$ (purple).  }
    \label{fig:nagumocore}
\end{figure}

We confirm that the algorithm is able to correctly identify the critical $\mu$ value at which the pushed-to-pulled transition occurs.  This convergence is observed to be exponential in $L$ in Figure~\ref{fig:nagumoLerror}.  We remark that even for a fairly large spatial discretization value of $dx=0.1$, the algorithm with fourth-order accurate discretizations is able to obtain the critical value of $\mu$ to five correct decimal places with only a moderately sized domain ($L=16$).  Thus, only approximately $300$ gridpoints are required and computational times are modest.  In contrast, second order finite differences with $dx=0.1$ are, as expected, only able to obtain two correct decimal places.  This is emphasized further in Figure~\ref{fig:nagumoLerror} where convergence of the observed critical $\mu$ values with respect to changes in $dx$ are shown.  

\begin{figure}
    \centering
     \subfigure{\includegraphics[width=0.4\textwidth]{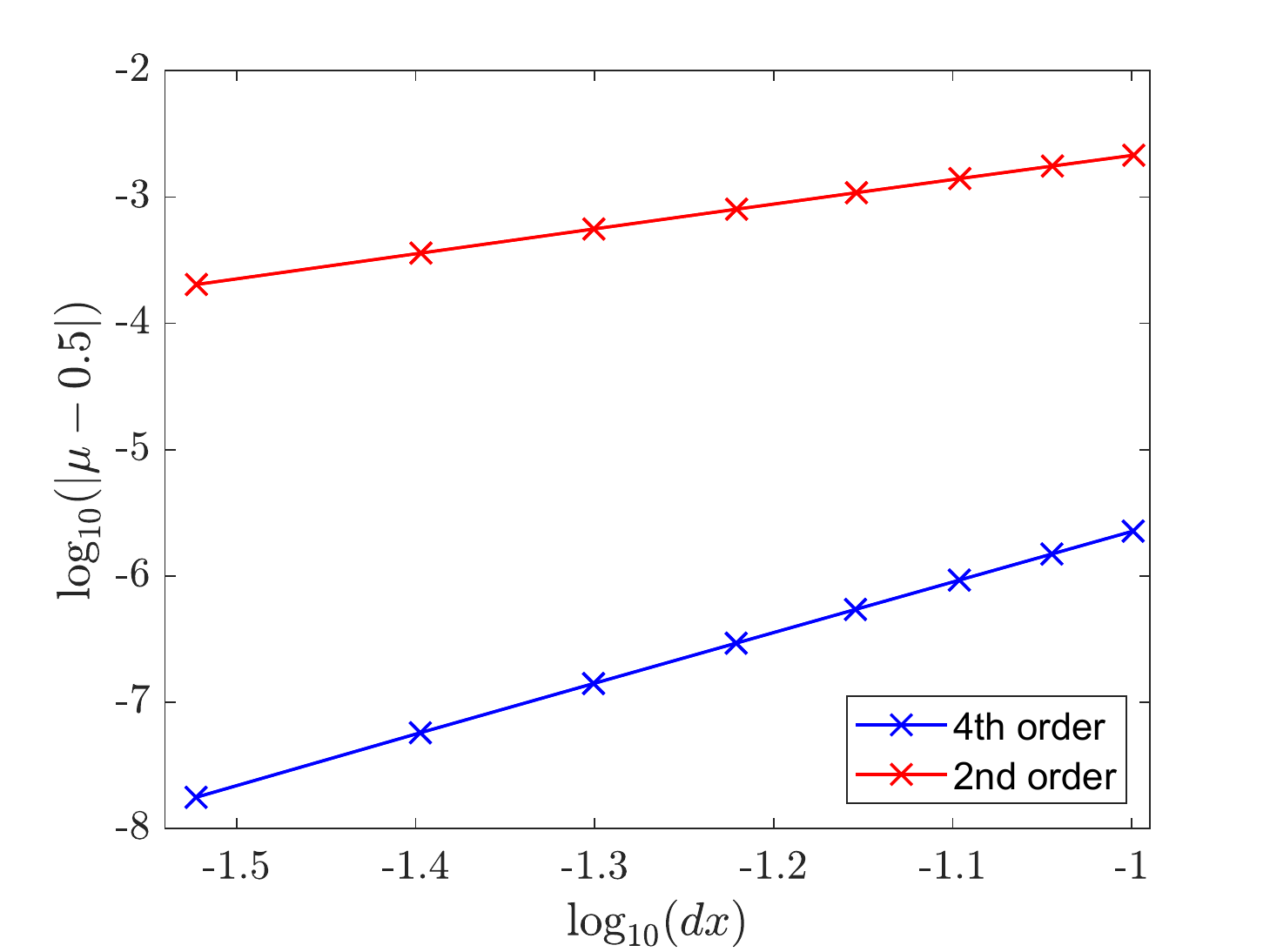}}\qquad
 \subfigure{\includegraphics[width=0.4\textwidth]{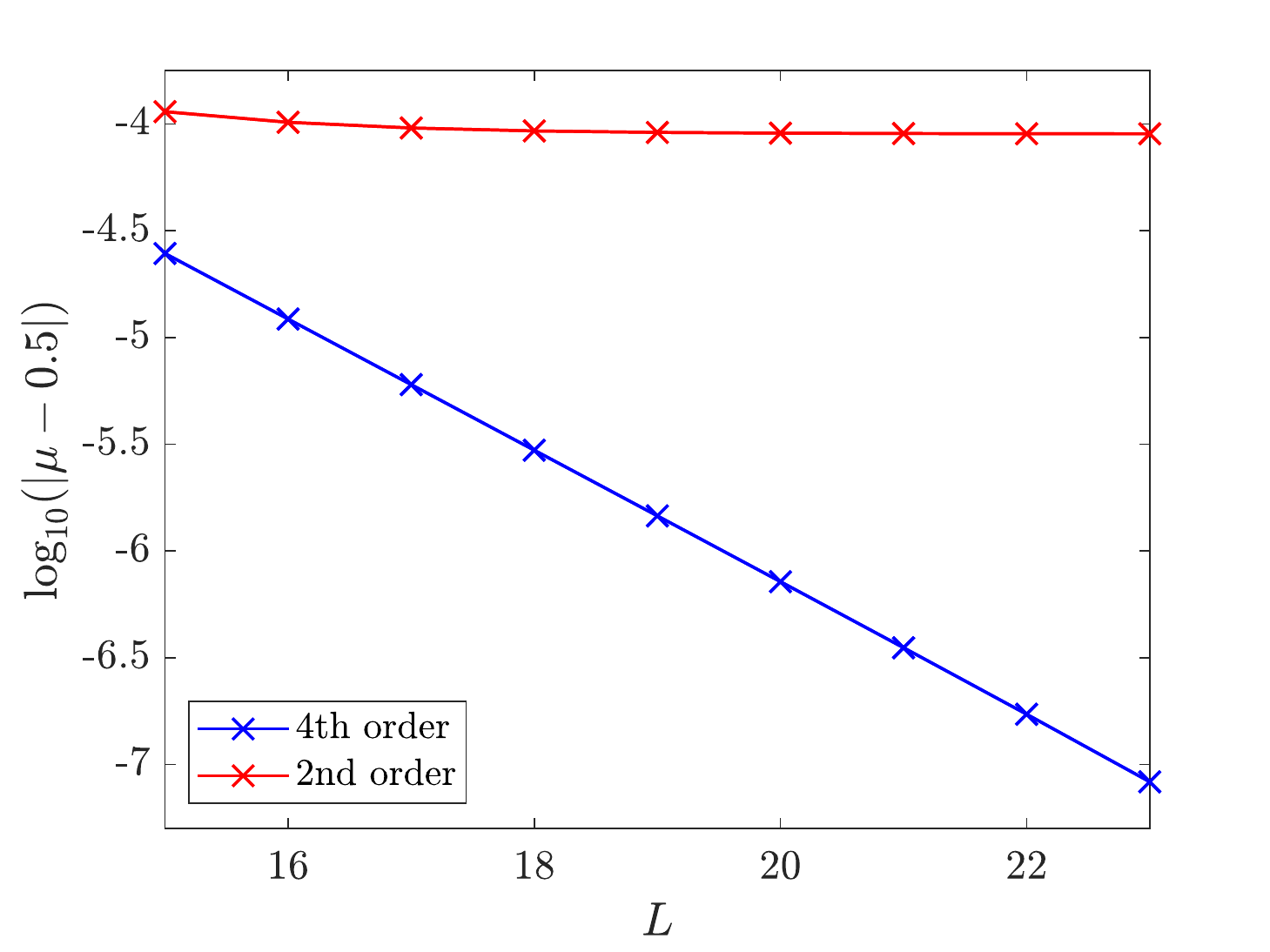}}
   \caption{Left: Convergence of the predicted pulled-to-pushed transition value for Nagumo's equation (\ref{eq:nagumo}) as the spatial discretization is varied for second-order (red) and fourth-order (blue) discretization, with slopes corresponding to the predicted slopes 2 and 4, repectively.  Right: Errors in the computed transition value as the length of the spatial domain $L$ is increased, fixing $dx=0.02$ for both second and fourth order discretizations, with exponential convergence for fourth-order discretization and saturation at discritization error for second-order discretizations.
} 
    \label{fig:nagumoLerror}
\end{figure}


Continuation through the critical value of $\mu=0.5$ demonstrates that the pulled-to-pushed transition is marked by a loss of monotonicity in the leading edge of the traveling wave profile.  This can be observed in Figure~\ref{fig:nagumopulled}.  For $\mu>0.5$ the traveling wave is monotone and positive while for $\mu<0.5$ this traveling front persists but has lost monotonicity due to a change in the sign of $\alpha$.  Taking a different approach we can also switch at $\mu=0.5$ to continue the pushed front out of the transition point.  This continuation is shown in Figure~\ref{fig:nagumopushed} where accurate, linear in $\mu$, values for the pushed invasion speed are obtained.  In the other direction, we demonstrate that the pushed front can be naturally continued through the bifurcation point at $\mu=0.5$ to obtain the family of weakly decaying super-critical fronts that propagate with speeds greater than the linear spreading speed; see again Figure~\ref{fig:nagumopulled}.  
\begin{figure}
    \centering
     \subfigure{\includegraphics[width=0.4\textwidth]{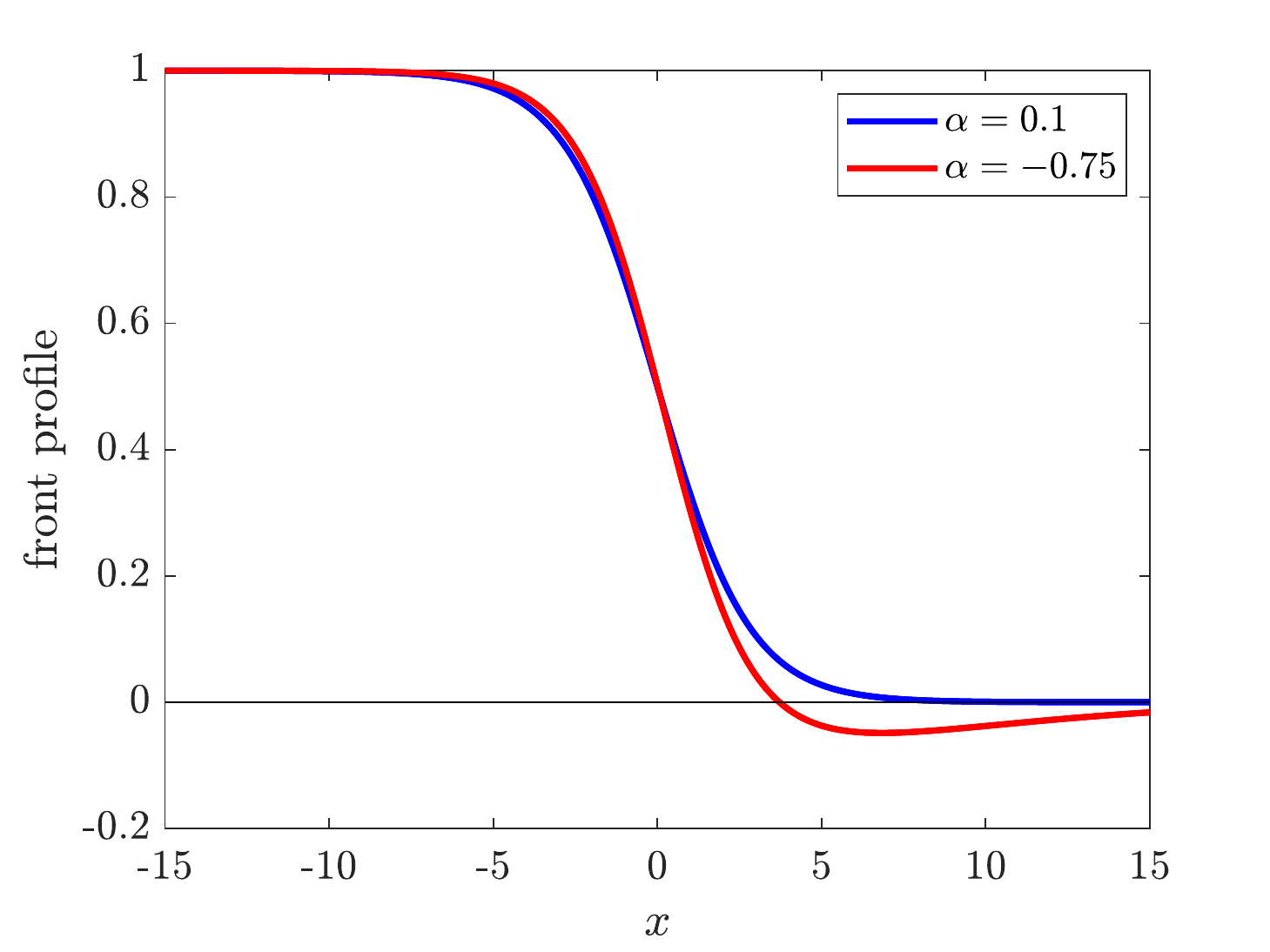}}\qquad
  \subfigure{\includegraphics[width=0.4\textwidth]{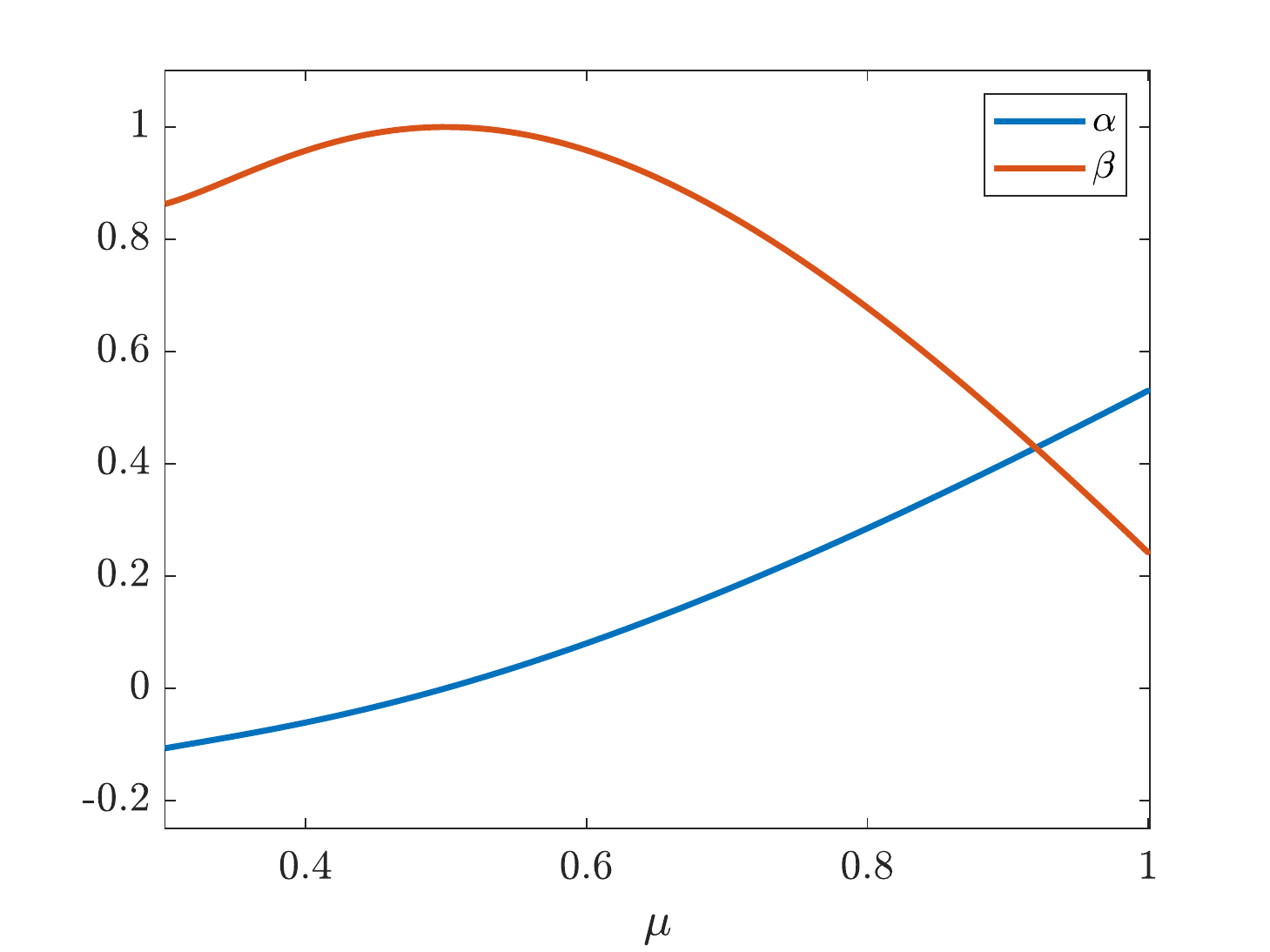}}
   \caption{Left: Front profiles for Nagumo's equation (\ref{eq:nagumo}) for $\alpha=0.1$ ($\mu\approx 0.620$) and $\alpha=0.75$ ($\mu\approx 0.126$) illustrating that the front loses monotonicity as $\alpha$ passes through zero.  Right: Computed $\alpha$ and $\beta$ values as $\mu$ is decreased from $1.0$.  Note that $\alpha=0$ at $\mu=0.5$ makes a transition from pushed-to-pulled fronts.  }
    \label{fig:nagumopulled}
\end{figure}
\begin{figure}
    \centering
     \subfigure{\includegraphics[width=0.4\textwidth]{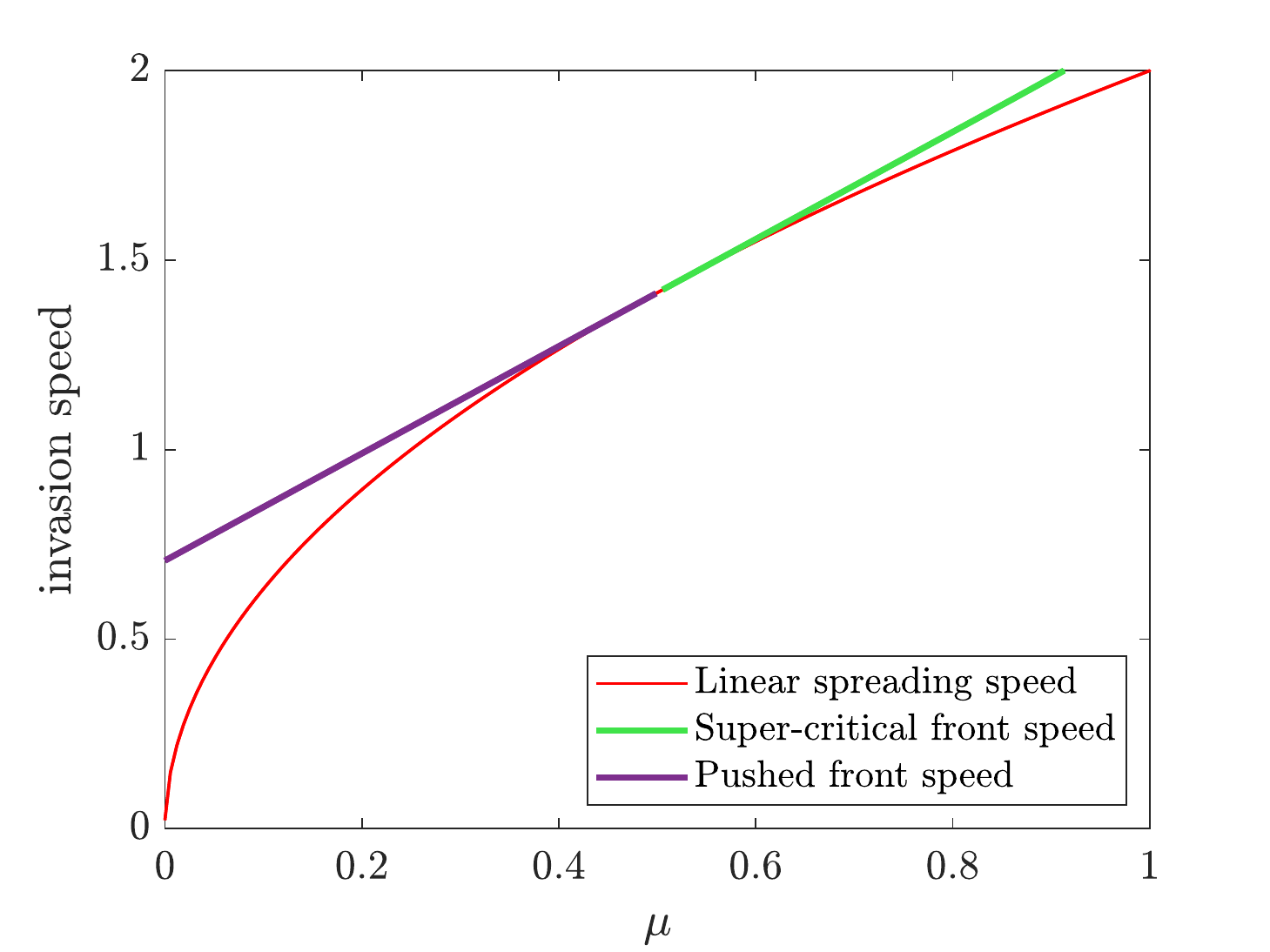}}\qquad
  \subfigure{\includegraphics[width=0.4\textwidth]{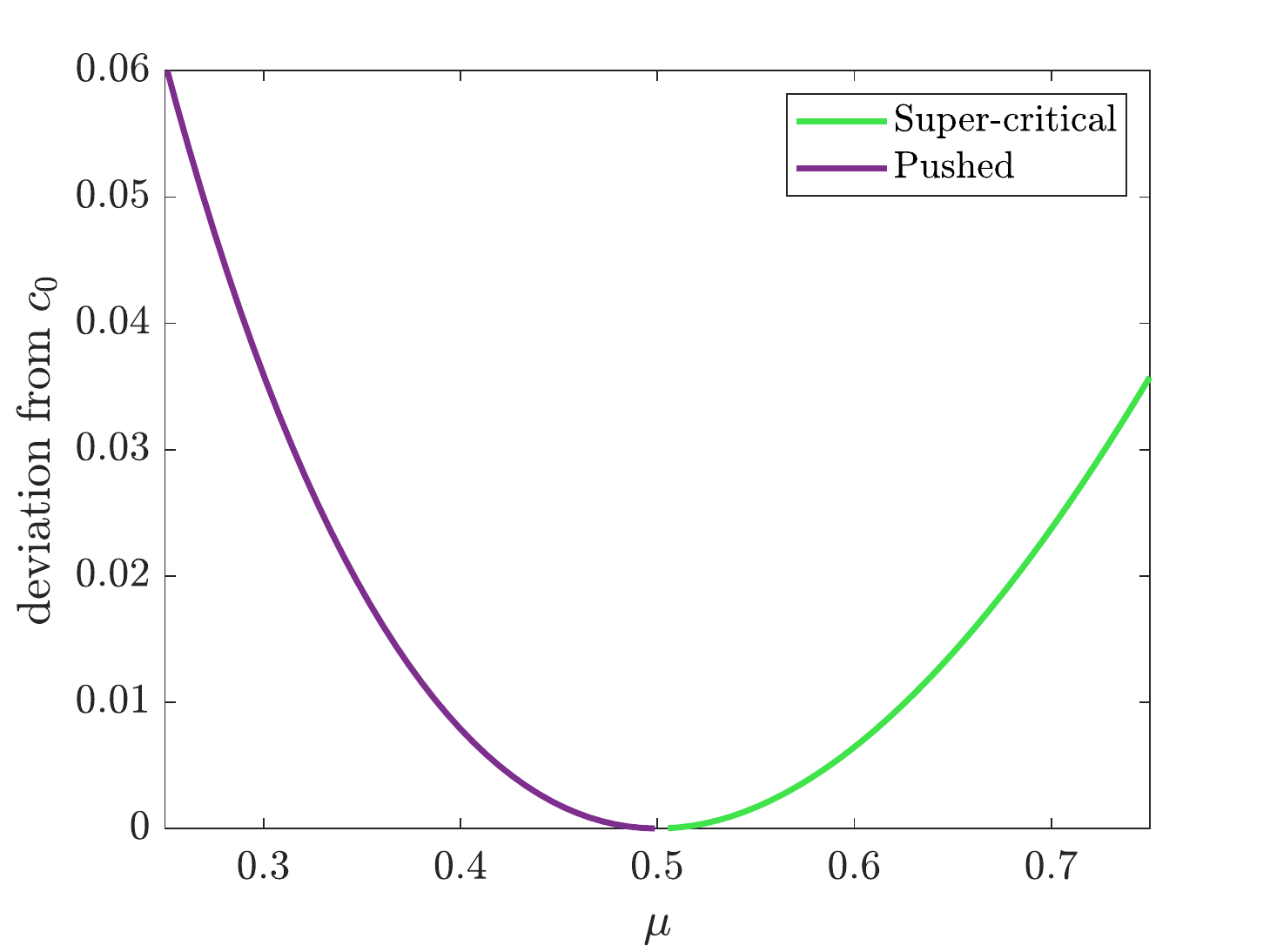}}
   \caption{Left: pushed front continuation through pushed-to-pulled transition follows the family of weakly decaying super-critical fronts for the Nagumo equation (\ref{eq:nagumo}).  Right: Comparison of pushed and  super-critical speeds to the linear spreading speed $2\sqrt{\mu}$ near the transition value $\mu=0.5$.    }
    \label{fig:nagumopushed}
\end{figure}
%
%
%
\paragraph{Fisher-KPP-Burgers equation.}
Next, we consider the Fisher-KPP-Burgers equation,
\begin{equation} u_t+\mu (uu_x)=u_{xx}+u-u^2,  \label{eq:kppburgers} 
\end{equation}
which was recently studied in \cite{an21}  with focus on convergence rates for pushed and pulled fronts, and fronts at the transition point. Front propagation is pulled for $\mu<2$ and pushed for $\mu>2$, with explicit pushed speed $c_\mathrm{ps}=\frac{\mu}{2}+\frac{2}{\mu}$. We focus here on  the numerical continuation of pushed fronts and show numerically determined front speeds and errors in Figure~\ref{fig:KPPburgers}.  Notably, errors are within $10^{-5}$ for $L=15$ and $dx=0.1$, so $N=300$. 

\begin{figure}
    \centering
     \subfigure{\includegraphics[width=0.4\textwidth]{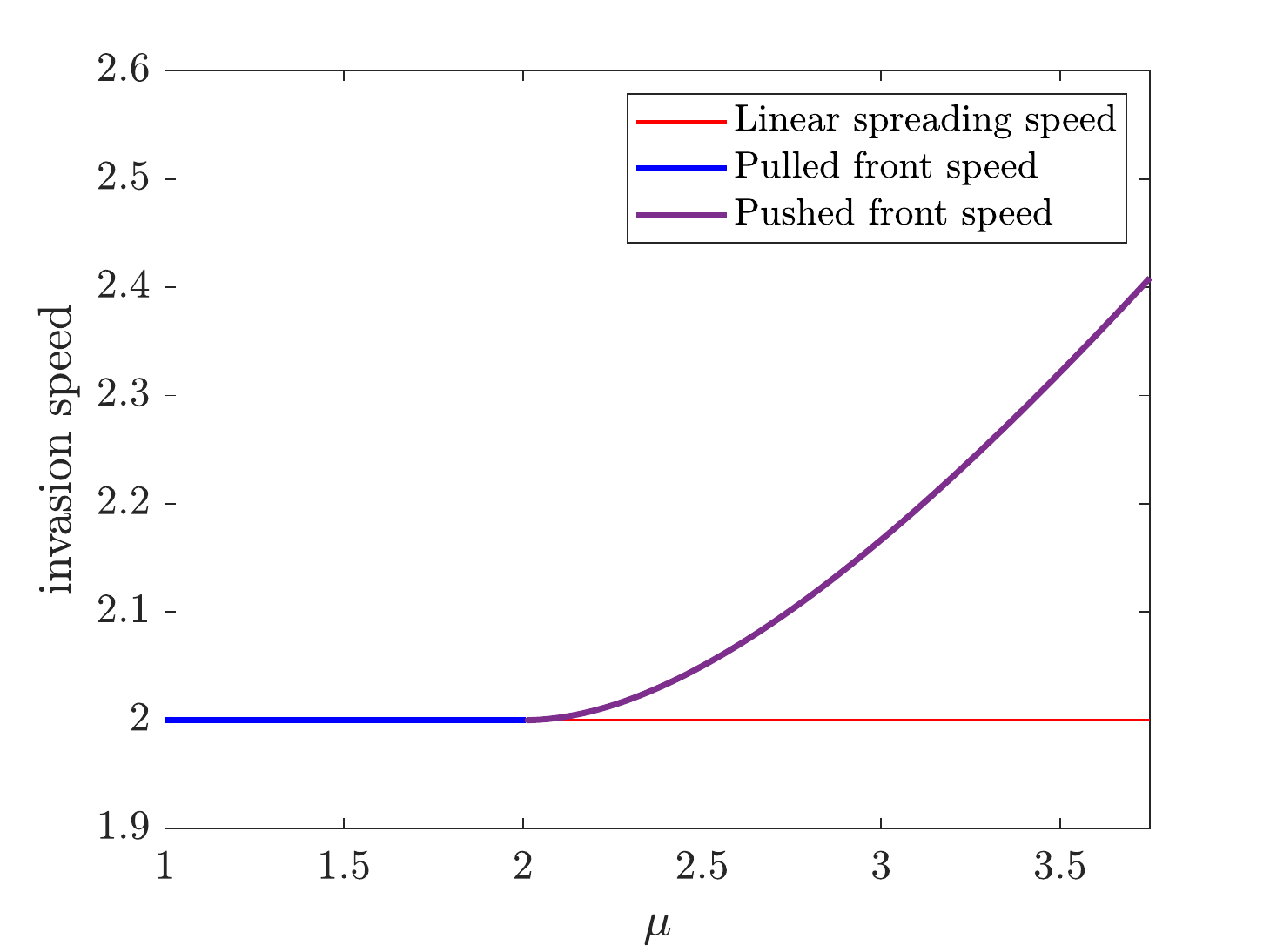}}\qquad
 \subfigure{\includegraphics[width=0.4\textwidth]{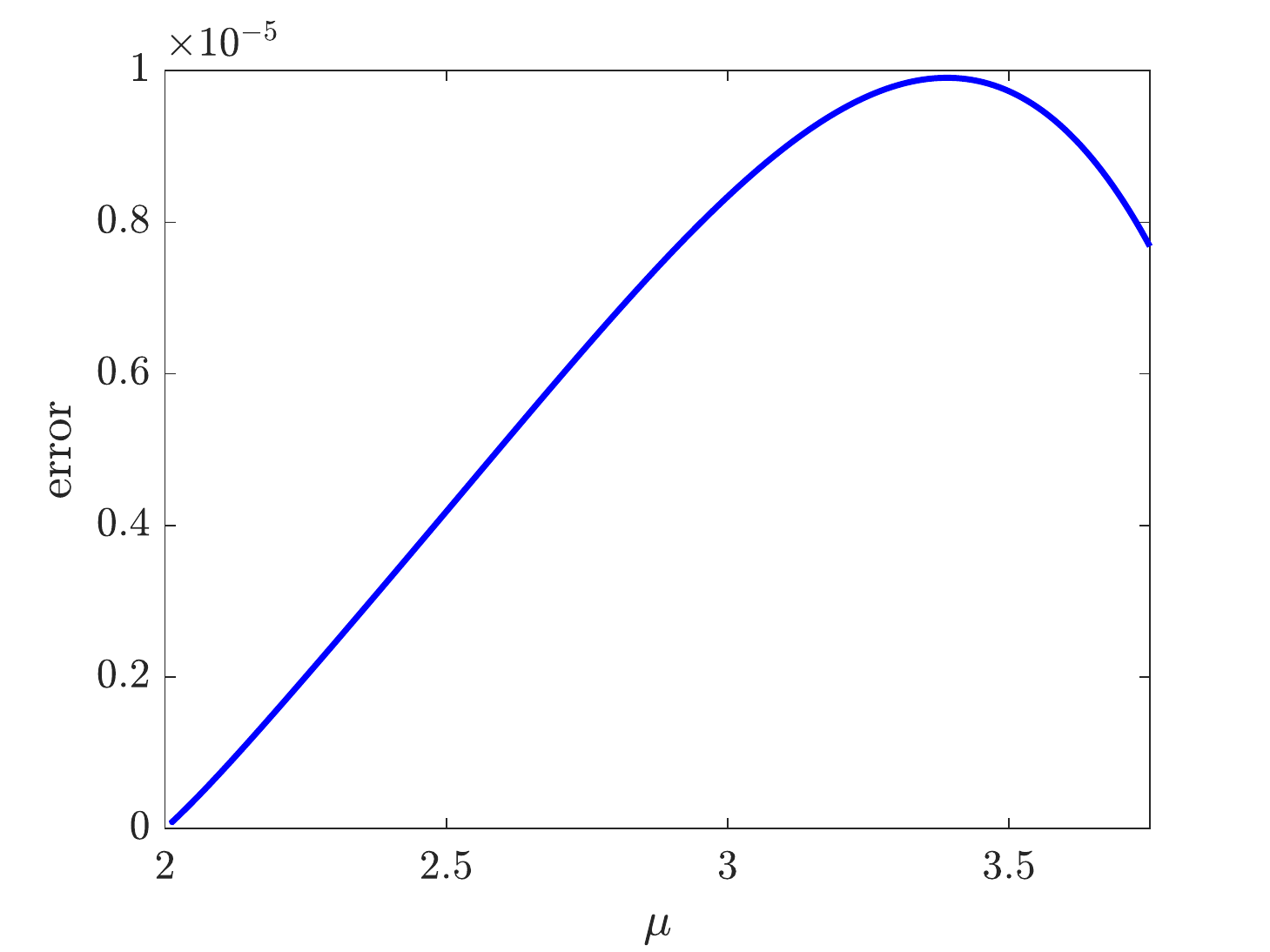}}
   \caption{Left: Pushed-to-pulled transition for Burgers-Fisher-KPP (\ref{eq:kppburgers}) as $\mu$ is varied.  Right: Error between  numerically determined wave speed and analytical speed $\frac{\mu}{2}+\frac{2}{\mu}$. Throughout, $L=15$ and  $dx=0.1$.  } 
    \label{fig:KPPburgers}
\end{figure}

\paragraph{The extended Fisher-KPP equation.}
A generalization of the Nagumo equation that incorporates a fourth-order diffusion term arises in several contexts, particularly as an amplitude equation near certain codimension-two points,
\begin{equation}
u_t = -\gamma u_{xxxx}+u_{xx}+u +\mu u^2-10 u^3; \label{eq:EFKPP}
\end{equation}
see  \cite{vanSaarloosReview} for references particularly in the context of front invasion. Clearly, the equation reduces to Fisher-KPP (or rather Nagumo's equation) at $\gamma=0$. The equation generates intriguing front-invasion dynamics, even for $\mu=0$, with stationary invasion for $\gamma<\frac{1}{12}$ and kink generation for $\gamma>\frac{1}{12}$.  We focus on the case $\gamma<\frac{1}{12}$ and study the transition from pulled to pushed fronts as $\mu$ is increased.  Numerically determined invasion speeds are shown in Figure~\ref{fig:EFKPP}.  

The dispersion relation for (\ref{eq:EFKPP}) 
\[ d(\lambda,\nu)=-\gamma\nu^4+\nu^2+c\nu+1, \]
allows for explicit formulas for speed and exponential decay but we find $(c,\nu)$ numerically as described above. Our continuation routine is able to locate a transition from pulled to pushed fronts as $\mu$ is increased.  Numerical results for two different values of $\gamma$ are provided in Figure~\ref{fig:EFKPP}.  It is worth noticing that the additional diffusion mechanism via a fourth order diffusion operator leads to an effective decrease in spreading speeds. For $\gamma$ small one may view \eqref{eq:EFKPP} as a singular perturbation of the classical Fisher-KPP equation. At $\gamma = 0$, simple scaling to match \eqref{eq:nagumo} predicts a pushed-to-pulled transition at $\mu = \sqrt{5}$. Using the methods of \cite{AveryGarenaux} to regularize the singular perturbation, one can rigorously establish a curve of pushed-to-pulled transitions $\mu_*(\gamma) = \sqrt{5} + \mathrm{o}(1)$ as $\gamma \to 0$, which our numerical algorithm confirms. 
%
%
%
\begin{figure}
    \centering
     \subfigure{\includegraphics[width=0.4\textwidth]{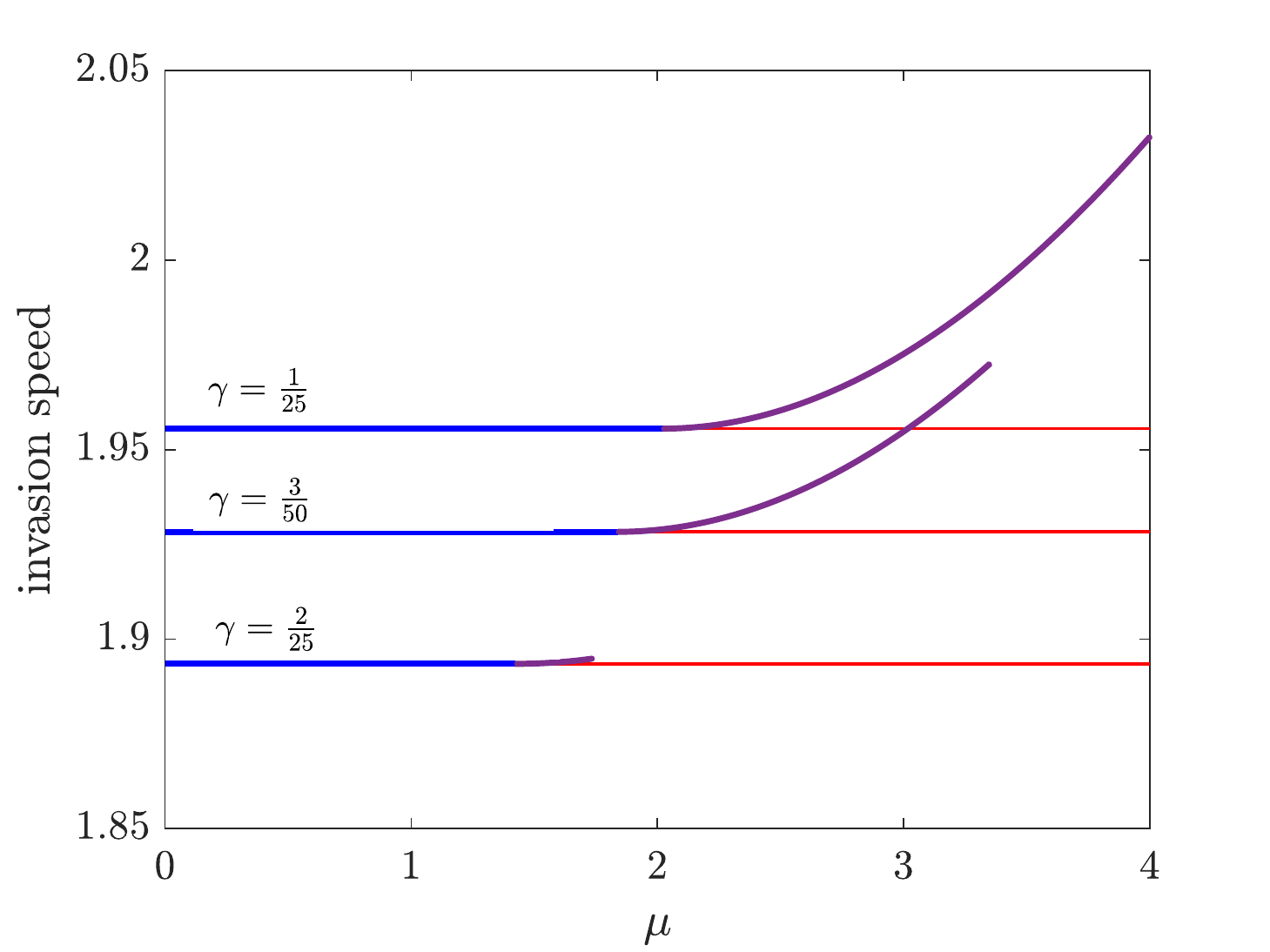}}\qquad
 \subfigure{\includegraphics[width=0.4\textwidth]{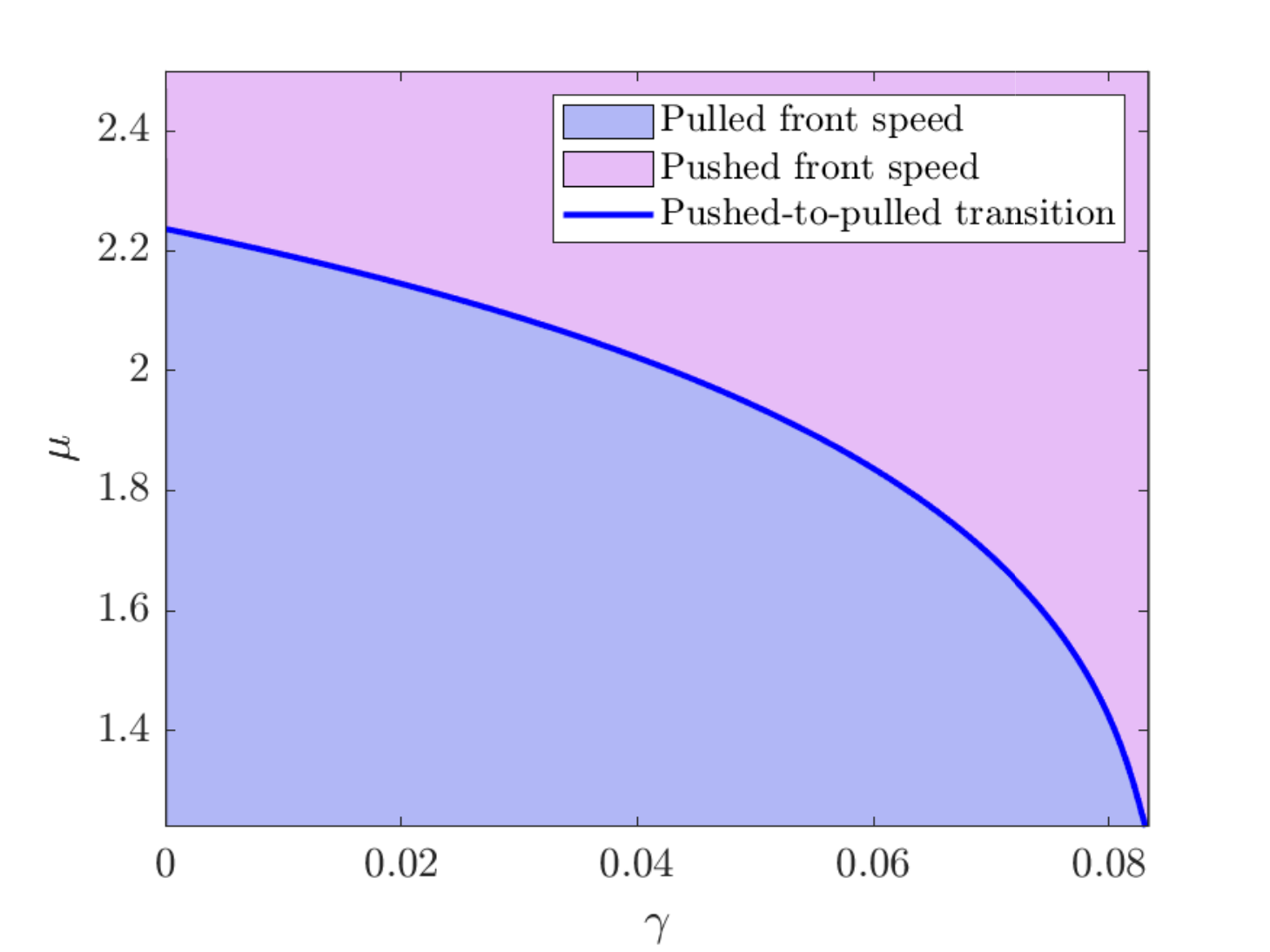}}
   \caption{Left: Pushed-to-pulled speed transitions for the Extended Fisher-KPP equation (\ref{eq:EFKPP}) for three different values of $\gamma$. Pushed fronts are continued until the relevant spatial eigenvalue forms a double root and the selected front loses monotonicity.  Right: Decomposition of $\gamma$-$\mu$ parameter space into pushed and pulled invasions for (\ref{eq:EFKPP}); note the limit of the transition on $\gamma=0$, $\mu=\sqrt{5}$ as predicted. } 
    \label{fig:EFKPP}
\end{figure}

\paragraph{A system modeling autocatalytic reactions.}
The following system of reaction-diffusion equations modeling autocatalytic reactions was considered in \cite{focant98} from a point of view of front propagation, 
\begin{eqnarray}
u_t&=& u_{xx}-uv-kuv^2 \nonumber \\
v_t&=& \sigma v_{xx}+uv+kuv^2. \label{eq:gallay}
\end{eqnarray}
The system possesses two lines of equilibria where $u=0$ or $v=0$.  We focus on fronts connecting the unstable state $(u,v)=(1,0)$ to the marginally stable state $(u,v)=(0,1)$.  Associated with the unstable state, we have the linearization
\[ A(\lambda,\nu,c,k)=\left(\begin{array}{cc} \nu^2+c\nu -\lambda & -1 \\ 0 & \sigma \nu^2+c\nu+1-\lambda \end{array}\right),\]
which is in upper triangular form, so that the  dispersion relation factorizes
\begin{equation} d(\lambda,\nu)=\left( \nu^2+c\nu-\lambda\right)\left(\sigma\nu^2+c\nu+1-\lambda\right),\label{eq:gallaydisp} \end{equation}
leading to a simple expression $ c_0=2\sqrt{\sigma}$ for the linear spreading speed (note however Remark~\ref{rem:anomalous} for some caveats to this consideration).  
Our algorithm splits solutions componentwise according to \eqref{eq:ffcore},
\begin{eqnarray}
u(x)&=& u_- \chi_-(x)+u_+\chi_+(x) +(\delta x+\gamma) e^{\nu x} \chi_+(x)+w_u(x) \nonumber  \\
v(x)&=& v_- \chi_-(x)+v_+\chi_+(x) +(\alpha x+\beta) e^{\nu x} \chi_+(x)+w_v(x) ,\label{eq:gallayansatz}
\end{eqnarray}
where $\delta$ and $\gamma$ are easily found in terms of $\alpha$ and $\beta$ explicitly. 
%
We impose a phase condition on the $u$ component only (either component would suffice) and the transversality condition \eqref{e:tr}.
Our continuation consists of two steps: first we fix $\sigma$ and continue in $k$ until we reach a value where $\alpha=0$ (the push-pulled transition value).  We then perform secant continuation while fixing $\alpha=0$ to continue the pushed-to-pulled transition in parameter space.   Due to a resonance at $\sigma=\frac{1}{2}$ our routine is unable to continue pulled fronts through this value and so continuation must be performed on either side of this value and then matched; see Remark~\ref{rem:anomalous}. Results are shown in Figure~\ref{fig:gallayex} and compare well with the numerical results in \cite{focant98}, where a shooting method was used to detect the transition from monotone to non-monotone tail behavior for fronts propagating at the linear speed. 
%
\begin{figure}
    \centering
     \subfigure{\includegraphics[width=0.4\textwidth]{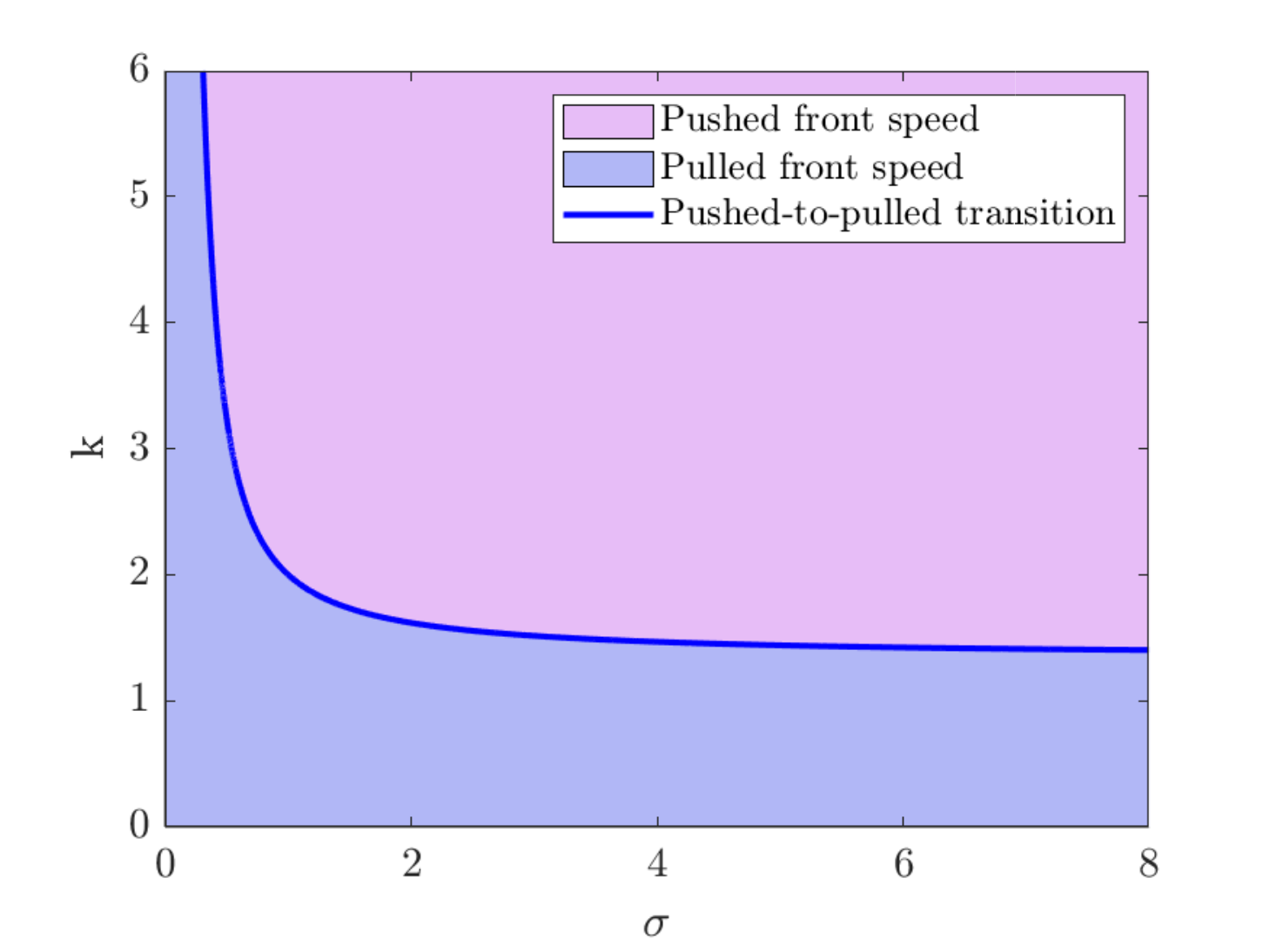}}\qquad
 \subfigure{\includegraphics[width=0.4\textwidth]{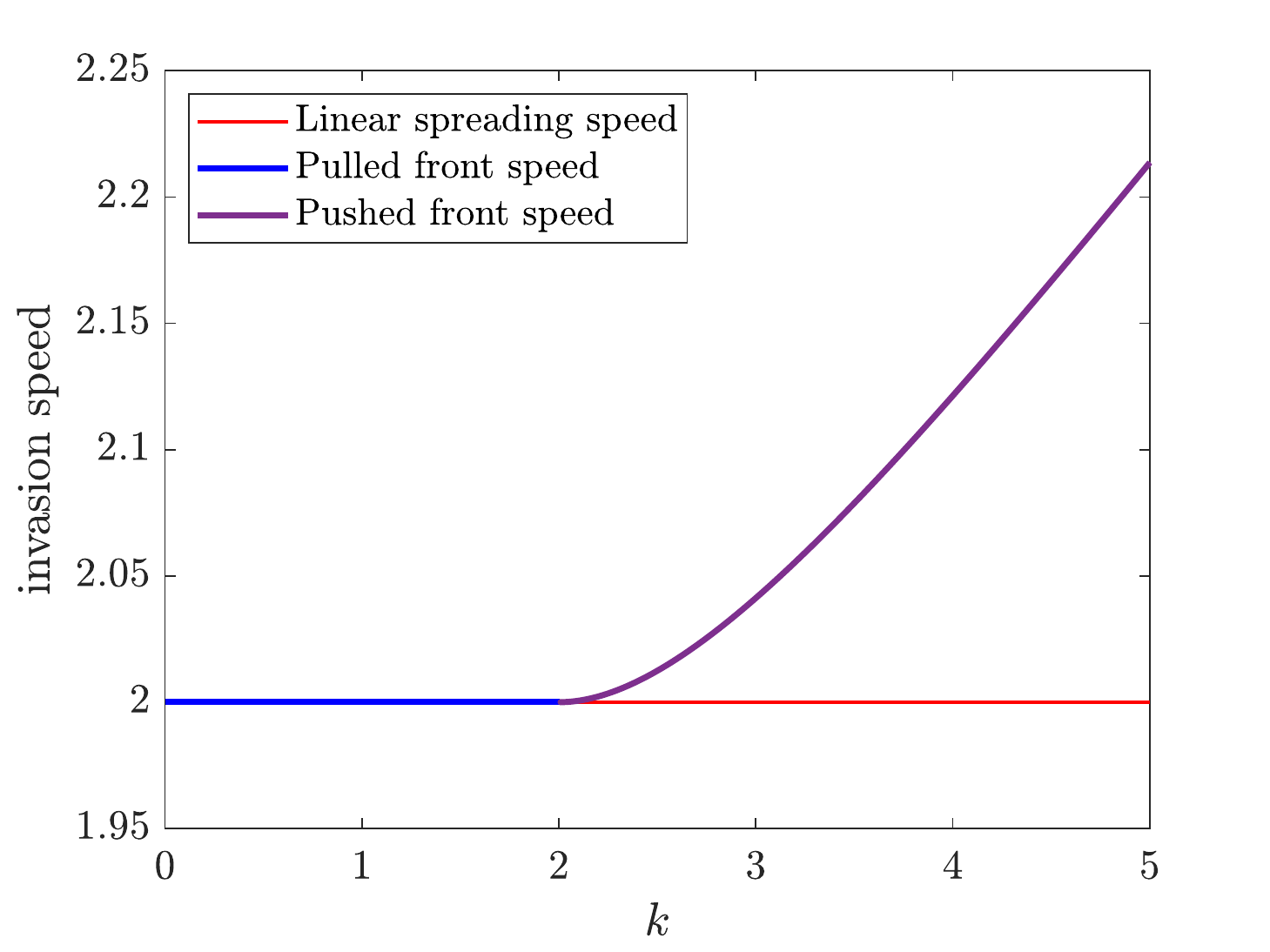}}
   \caption{Left: Pushed-to-pulled transition in $\sigma$-$k$ parameter space for the system of equations modeling autocatlytic reactions given in  (\ref{eq:gallay}).  The data on the left represents two separate continuation runs -- one that starts near $\sigma=4.0$ and decreases to $\sigma=0.5$ and a second that starts at $\sigma=0.45$ and continues down in $\sigma$.  In this example, we use $L=20$ and $N=400$ for an spatial discretization $dx=0.1$.  Right: Pushed-to-pulled transition for (\ref{eq:gallay}) for fixed $\sigma=1.0$ and varying $k$.   }
    \label{fig:gallayex}
\end{figure}

\begin{remark}\label{rem:anomalous}  The dispersion relation (\ref{eq:gallaydisp}) has four roots with two from the linearization of the $u$ component $\nu_u^\pm=-\frac{c}{2}\pm\frac{1}{2}\sqrt{c^2-4\lambda}$ and two from the $v$ component $\nu_v^\pm=-\frac{c}{2\sigma}\pm\frac{1}{2\sigma}\sqrt{c^2-4\sigma\lambda}$.  Note that when $\sigma=\frac{1}{2}$ then $\nu_u^-(0)=\nu_v^\pm(0)=-\sqrt{2}$.  This resonance between the eigenvalues implies that the pulled front ansatz in the $u$ component is insufficient and would require a term that is quadratic in $x$. Thus, the resonance prevents the continuation of pulled fronts through $\sigma=\frac{1}{2}$ as $|\delta|\to\infty$ as $\sigma\to \frac{1}{2}$.  

We also note that when $\sigma<\frac{1}{2}$ the pinched double root no longer contains the weakest decaying stable roots since $\nu_v^\pm(0)<\nu_u^-(0)$.  While this precludes a direct application of our main theorems, one can adapt the functional analytic setting using different exponential weights in $u$- and $v$-components and obtain the same result in this setting. Numerically, the method applies without modification. 

We also remark in passing that when $\sigma<1$ there exists an unstable pinched double root caused by the resonance $\nu_u^-(\lambda)=\nu_v^+(\lambda)$.  This pinched double root is in general related to anomalous spreading speeds but is in the present case {\em irrelevant} in the terminology of \cite{holzeranomalous,HolzerScheelPointwiseGrowth}  so that the selected invasion speed is still the linear speed $2\sqrt{\sigma}$ in this case.  More information on relevant and irrelevant speeds and how these are enabled or disabled by linear or nonlinear coupling terms related to resonances in the linear dispersion relation can be found in \cite{FayeHolzerScheel}.
\end{remark}

\paragraph{A Keller-Segel model with repulsive interaction.}
We next consider the Keller-Segel model for chemotactic motion with logistic population growth,
\begin{eqnarray}
u_t&=& u_{xx}+\chi (uv_x)_x +u(1-u) \nonumber \\
0&=& \sigma v_{xx}+u-v, \label{eq:ks}
\end{eqnarray}
for a population $u$ of bacteria which produce a rapidly diffusing and decaying chemical agent $v$, and which move with speed proportional to the gradient of the chemical signal $v_x$. 
When $\chi=0$ the first equation decouples and is simply the Fisher-KPP equation with pulled invasion speed $c=2$. This linear predicted speed is unaffected by the nonlinear coupling when $\chi\neq 0$.  We focus here on the case $\chi>0$ so that the quadratic term $\chi (uv_x)_x$ is repulsive, modeling negative taxis, that is, motion of bacteria away from locations of high signal concentrations, and thereby away from high concentrations of other bacteria.  Such an effect could clearly be able to accelerate the spreading of bacteria, enhancing the diffusive motion of bacteria from a high-concentration region where $u\sim 1$ to the unstable region where $u\sim 0$. Note however that the chemotactic effect is essentially quadratic in the amplitude of $u$ and it is therefore not a priori clear how it would compete with the quadratic saturation of growth $-u^2$. 

For $\chi$ sufficiently large this repulsive effect does indeed lead to propagation via pushed fronts, as was recently demonstrated in  \cite{henderson21}, particularly when 
%
$\chi>2$ and $\sigma\gg 1$. 

We investigate this pushed-to-pulled transition in $\sigma$-$\chi$ parameter space using our computational approach. 
The linearization at the unstable state $(0,0)$ is in triangular form, 
\[ A(\lambda,\nu,c,\sigma)=\left(\begin{array}{cc} \nu^2+c\nu+1-\lambda & 0 \\ 1 & \sigma\nu^2-1\end{array}\right), \]
with dispersion relation factored as
\[ d(\lambda,\nu)=(\nu^2+c\nu+1-\lambda)(\sigma\nu^2-1). \]
The linear spreading speed is always the Fisher-KPP speed $c_0=2$. The continuation set-up in this case is similar to that of the system (\ref{eq:gallay}).  The main difference is that the $v$ component lacks any advective terms. We first continue in $\chi$ from $\chi=0$ until the pulled-to-pushed transition and then continue the transition in two parameters; see Figure~\ref{fig:ksEX}.  Due to the skew-product nature of the dispersion relation we again find resonances that prevent direct continuation of the pushed-to-pulled transition curve over all values of parameters.  In particular, when $\sigma=1$, one root of the $v$ component aligns with the critical decay rate $\nu=-1$.  Direct continuation of the transition through this value of $\sigma$ is not possible using our routine.
\begin{figure}
    \centering
     \subfigure{\includegraphics[width=0.43\textwidth]{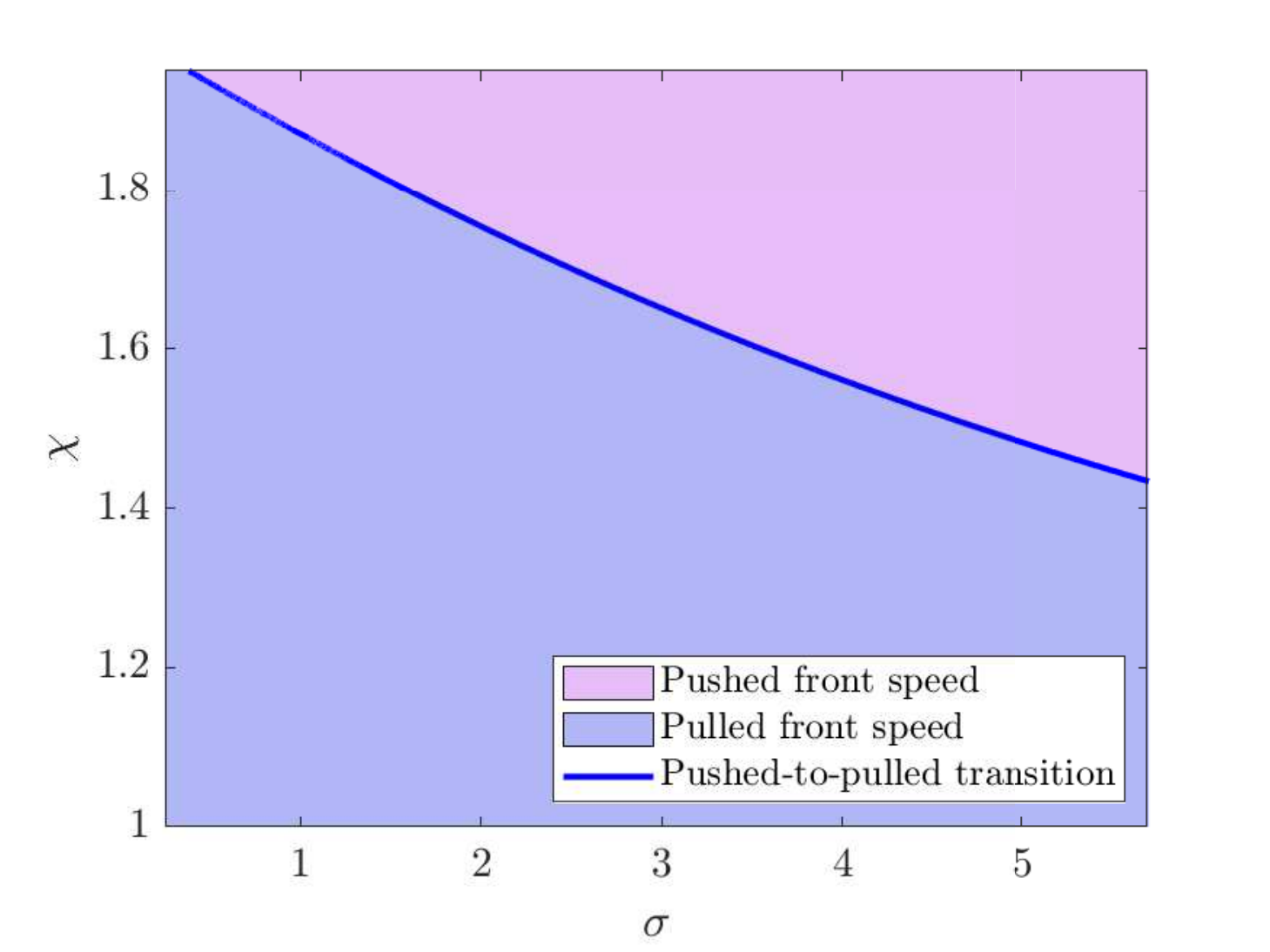}}\qquad
     \subfigure{\includegraphics[width=0.43\textwidth]{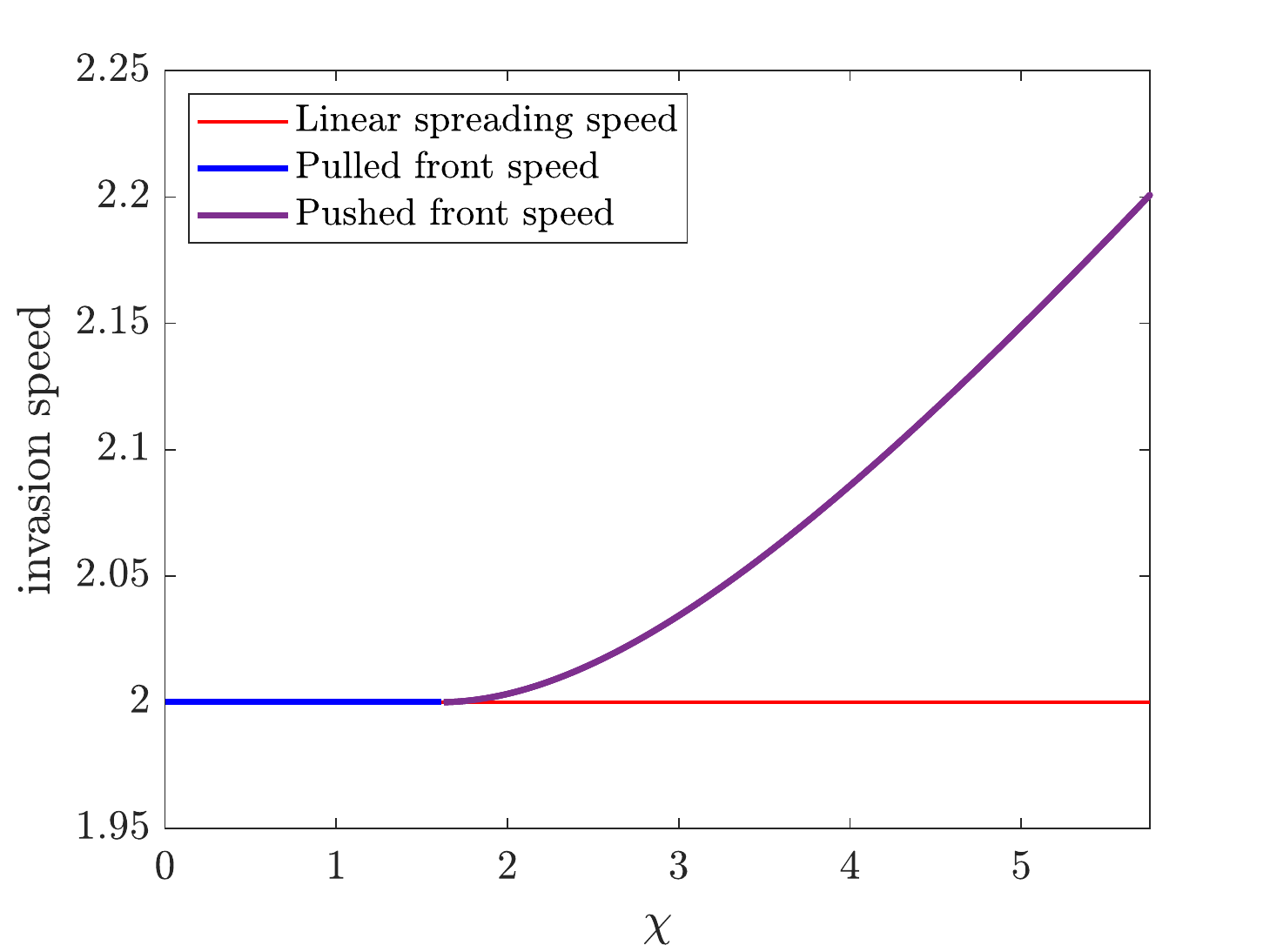}}
   \caption{ Left: Pushed-to-pulled transition in $\sigma$-$k$ parameter space for the Keller-Segel system  (\ref{eq:ks}).  Right: Pulled and pushed invasion speed for (\ref{eq:ks}) with $\sigma=3.50$ fixed.  }
    \label{fig:ksEX}
\end{figure}

\paragraph{Lotka-Volterra competition model.}
As a final example, we briefly consider the Lotka-Volterra competition model,
\begin{eqnarray}
u_t&=& u_{xx}+u(1-u-a_1v) \nonumber \\
0&=& \sigma v_{xx}+rv(1-a_2u-v). \label{eq:LV}
\end{eqnarray}
When $a_1<1<a_2$ the equilibrium point $(0,1)$ is unstable and the dispersion relation is
\[ A(\lambda,\nu,c,\sigma)=\left(\begin{array}{cc} \nu^2+c\nu+1-a_1 -\lambda & 0 \\ -ra_2   & \sigma\nu^2+c\nu-r-\lambda\end{array}\right). \]
The linear spreading speed is $c_0=2\sqrt{1-a_1}$, although we note that there is a faster linear invasion speed which is not relevant; see \cite{holzer12} for a detailed discussion of this phenomena.  

Traveling front solutions of (\ref{eq:LV}) have been studied by many authors; see for example \cite{hosono98}. A particular focus has been on locating regions in parameter space where the invasion fronts are pulled or pushed.  In Figure~\ref{fig:LV}, we demonstrate how our routine can be used to quickly determine numerical approximations to these pushed-to-pulled transition curves.  Here we fix two parameters ($\sigma=1.0$ and $a_1=0.5$) and vary the remaining two parameters ($a_2$ and $r$).  We compare our numerically determined transition curve to known analytical results presented in \cite{lewis02} where it is shown that the invasion front is pulled when $a_2<\frac{2}{r}+2$ (with $\sigma=1.0$ and $a_1=0.5$ fixed).  It was conjectured in \cite{hosono98} that the blue pushed-to-pulled transition curve has a vertical asymptote at $a_2=2$ (for $a_1=0.5$).  Our numerical continuation, while not necessarily accurate for arbitrarily large values of $r$, suggests that this asymptote occurs for some value of $a_2>2$.  


\begin{figure}
    \centering
     \subfigure{\includegraphics[width=0.43\textwidth]{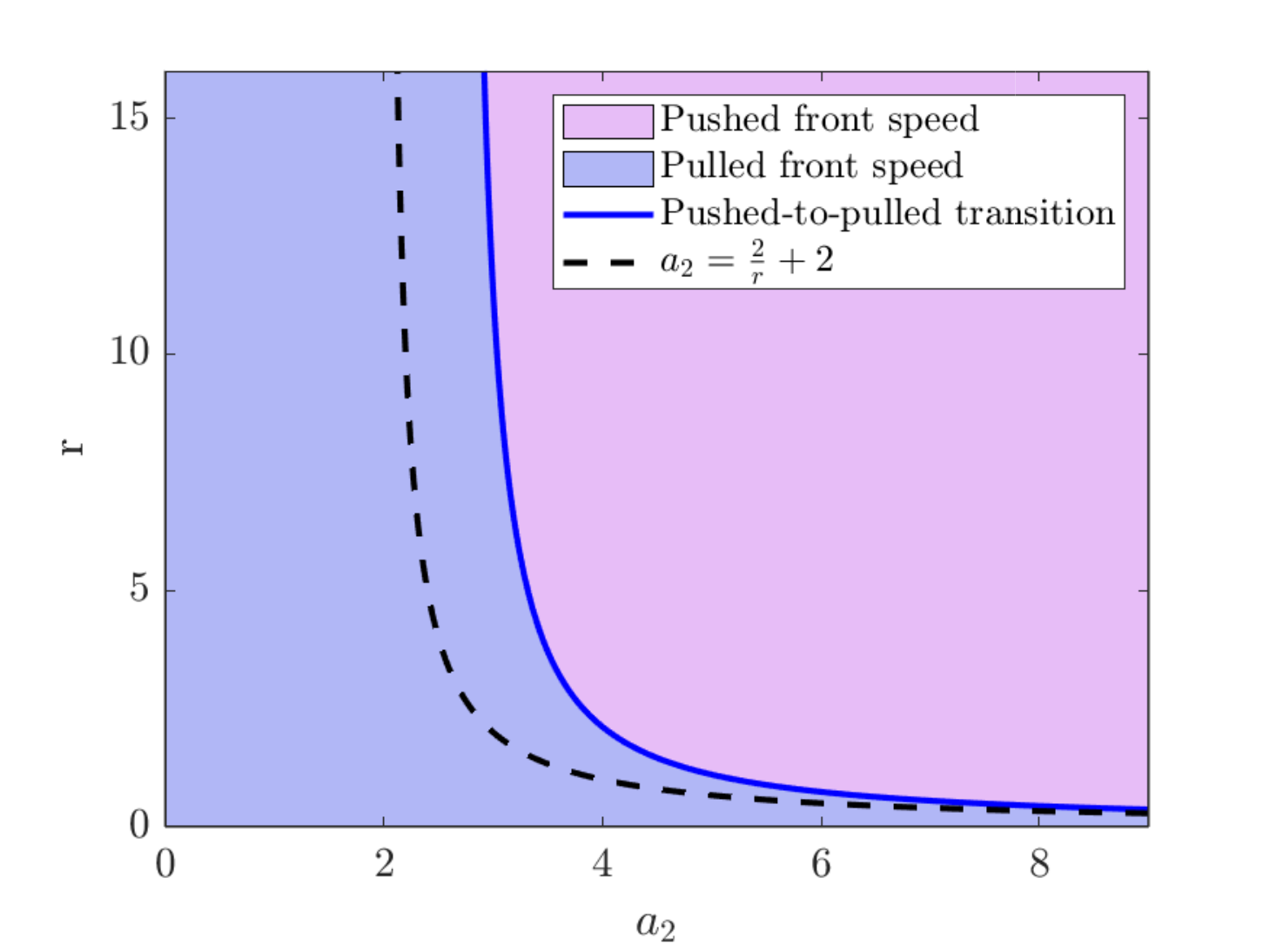}}\qquad
    \subfigure{\includegraphics[width=0.43\textwidth]{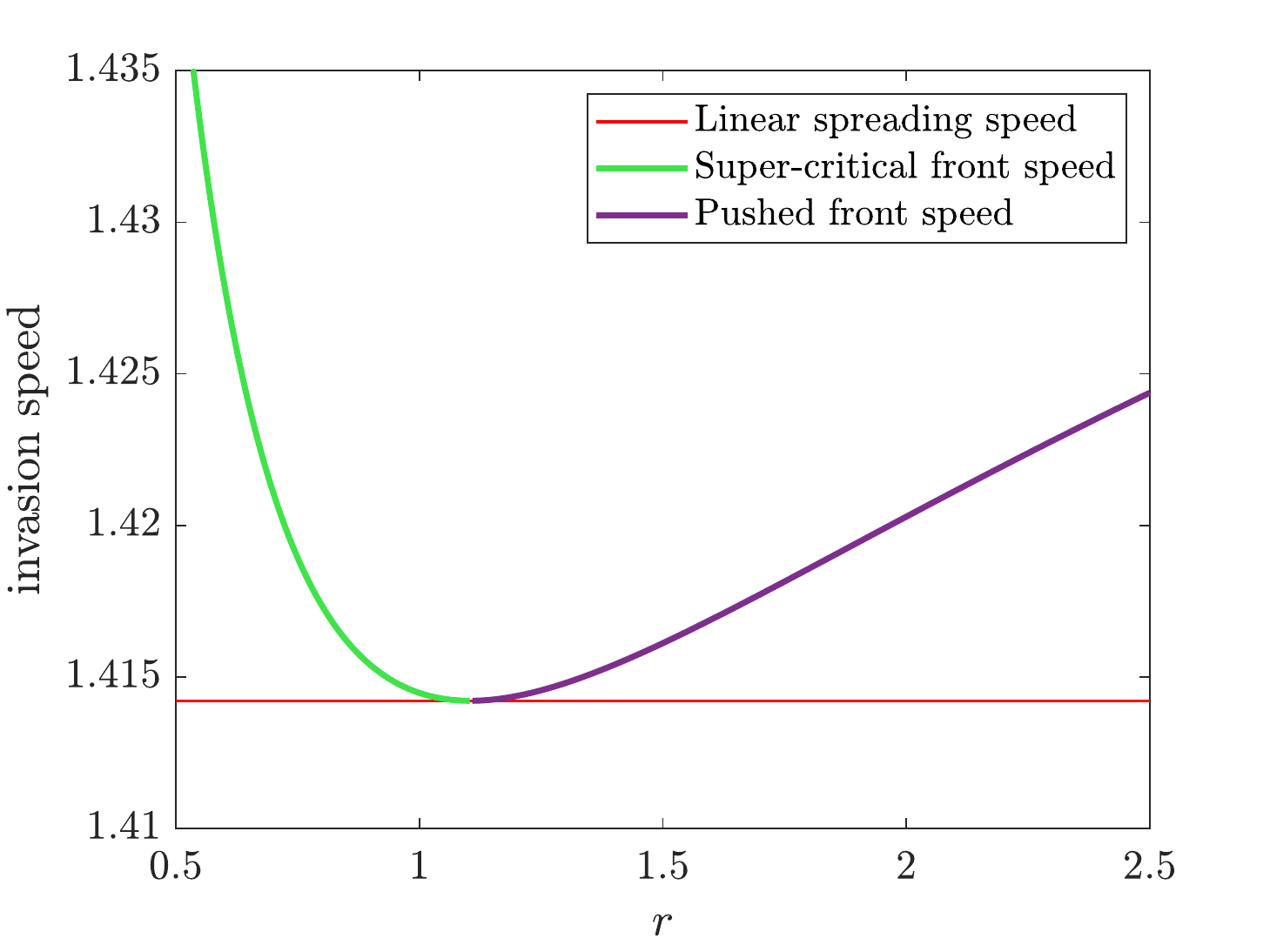}}
   \caption{Left: Pushed-to-pulled transition in $a_2$-$r$ parameter space for the Lotka-Volterra model (\ref{eq:LV}).  All other parameters are fixed to $\sigma=1.0$, $a_1=0.5$ and $a_2=5.0$. For parameters below the black dashed line the invasion process have been proved to be pulled; see \cite{lewis02}. Right: Continuation of pushed and super-critical fronts from the pushed-to-pulled transition point.  The red line indicated the linear spreading speed $2\sqrt{1-a_1}$ with $a_1=0.5$ for all computations.  
   }
    \label{fig:LV}
\end{figure}

%
%
%
%
%
%
%
%
%
%
%
%
\section{Discussion}
We presented an analytical and computational study of the transition from pushed to pulled front invasion. Both analytical and computational results rely on a far-field-core decomposition with explicit tail corrections. Compared to previous numerical techniques, our algorithm converges exponentially in the size of the domain and detects the transition point accurately. We suspect that the analytic approach towards finding eigenvalues and resonances in the linearization could be adapted to numerical algorithms in a similar fashion as the nonlinear existence result was adapted to numerical continuation. 

As a first generalization, one would wish to adapt both analysis and numerics to include oscillatory invasion, that is, cases when marginal linear stability is caused by double roots $(\lambda,\nu)$ on the imaginary axis, that is, $\lambda=i\omega$ with $\omega\neq 0$. A transition to this scenario was in fact the limit of validity of our numerical study in the EFKPP case. The tools developed here can in fact be adapted in a straightforward fashion. One would look for time-periodic solutions in a comoving frame, add a phase condition eliminating temporal shifts, and the frequency as a Lagrange parameter in the case of pushed fronts. For pulled fronts, the parameters in the asymptotics $\alpha$ and $\beta$ are complex, compensating for now two phase conditions for space and time and two transversality conditions. We expect quadratic corrections to both linearly predicted speed and frequency for pushed fronts. Clearly, both analytical and numerical setup now require a PDE rather than an ODE boundary-value problem, although Fourier-expansions in the temporal variable converge rapidly and reduce the problem to one similar to the present case; see for instance \cite{ce,ssmod} for more background on this analogy and an analytical existence result for fronts in this context, \cite{lpss} for numerical aspects. In the particularly interesting case where patterns are selected in the wake, one would adapt the far-field core strategy to insert a periodic solution $u(kx;k)$, $u(y)=u(y+2\pi;k)$, which in turn is computed separately from a periodic boundary-value problem; see \cite{lloydscheel,cdgjs} for implementations of this approach using spectral discretization. 

Including this case of oscillatory invasion, one then is tempted to ask for a broad strategy of predicting, analytically or computationally, invasion speeds. The present work presents one step in this direction, but also detects caveats. Without venturing into an open ended discussion of what could possibly go wrong, we point here to the problems with resonances in some of the computational examples, in particular Remark \ref{rem:anomalous}, to the discussion in \cite{averyscheelselection} for perspective, and \cite{FayeHolzerScheel} for an attempt at broadening the concept of linear spreading speeds to general resonances, possibly comprising all linear mechanisms of invasion and leaving only point spectrum instabilities as the ones discussed here as means of altering linear prediction. 

In a different direction, one can compare the analysis here with bifurcations of coherent structures elsewhere. Detaching of defects from a speed of propagation determined by a background field was identified in \cite[\S6.4]{ssdefect} as a generic bifurcation. Defects are localized structures asymptotic to wave trains. Contact defects travel with the group velocity of the background state, whereas transmission defects travel with a nonlinear speed different from that group velocity. Expanding on the discussion presented there, one finds the same universal quadratic correction to speeds in the bifucation that we identified here. The methods there were geometric rather than functional analytic, but the underlying homoclinic codimension-two bifurcation can be recovered in our context using geometric desingularization of the leading-edge equilibrium as demonstrated in \cite{adss}.

\bibliographystyle{abbrv}

\bibliography{pp}

\begin{thebibliography}{10}

\bibitem{an21}
J.~An, C.~Henderson, and L.~Ryzhik.
\newblock Pushed, pulled and pushmi-pullyu fronts of the {B}urgers-{FKPP}
  equation.
\newblock {\em arXiv preprint arXiv:2108.07861}, 2021.

\bibitem{adss}
M.~Avery, C.~Dedina, A.~Smith, and A.~Scheel.
\newblock Instability in large bounded domains---branched versus unbranched
  resonances.
\newblock {\em Nonlinearity}, 34(11):7916--7937, 2021.

\bibitem{AveryGarenaux}
M.~Avery and L.~Gar\'enaux.
\newblock Spectral stability of the critical front in the extended
  {F}isher-{KPP} equation.
\newblock {\em Preprint}, 2020.

\bibitem{averyscheelselection}
M.~Avery and A.~Scheel.
\newblock Universal selection of pulled fronts.
\newblock {\em Comm. Amer. Math. Soc.}, to appear.

\bibitem{bers1983handbook}
A.~Bers, M.~Rosenbluth, and R.~Sagdeev.
\newblock Handbook of plasma physics.
\newblock {\em MN Rosenbluth and RZ Sagdeev eds}, 1(3.2), 1983.

\bibitem{beyn_hetero}
W.-J. Beyn.
\newblock The numerical computation of connecting orbits in dynamical systems.
\newblock {\em IMA J. Numer. Anal.}, 10(3):379--405, 1990.

\bibitem{beyn_freeze}
W.-J. Beyn and V.~Th\"{u}mmler.
\newblock Freezing solutions of equivariant evolution equations.
\newblock {\em SIAM J. Appl. Dyn. Syst.}, 3(2):85--116, 2004.

\bibitem{Bramson1}
M.~Bramson.
\newblock Maximal displacement of branching {B}rownian motion.
\newblock {\em Comm. Pure Appl. Math.}, 31(5):531--581, 1978.

\bibitem{Bramson2}
M.~Bramson.
\newblock {\em Convergence of solutions of the {K}olmogorov equation to
  traveling waves}.
\newblock Mem. Amer. Math. Soc. American Mathematical Society, 1983.

\bibitem{auto_hom}
A.~R. Champneys, Y.~A. Kuznetsov, and B.~Sandstede.
\newblock A numerical toolbox for homoclinic bifurcation analysis.
\newblock {\em Internat. J. Bifur. Chaos Appl. Sci. Engrg.}, 6(5):867--887,
  1996.

\bibitem{cdgjs}
K.~Chen, Z.~Deiman, R.~Goh, S.~Jankovic, and A.~Scheel.
\newblock Strain and defects in oblique stripe growth.
\newblock {\em Multiscale Model. Simul.}, 19(3):1236--1260, 2021.

\bibitem{ce}
P.~Collet and J.-P. Eckmann.
\newblock {\em Instabilities and fronts in extended systems}.
\newblock Princeton Series in Physics. Princeton University Press, Princeton,
  NJ, 1990.

\bibitem{EbertvanSaarloos}
U.~Ebert and W.~van Saarloos.
\newblock Front propagation into unstable states: universal algebraic
  convergence towards uniformly translating pulled fronts.
\newblock {\em Phys. D}, 146:1--99, 2000.

\bibitem{FayeHolzerScheel}
G.~Faye, M.~Holzer, and A.~Scheel.
\newblock Linear spreading speeds from nonlinear resonant interaction.
\newblock {\em Nonlinearity}, 30(6):2403--2442, 2017.

\bibitem{FayeHolzerScheelSiemer}
G.~Faye, M.~Holzer, A.~Scheel, and L.~Siemer.
\newblock Invasion into remnant instability: a case study of front dynamics.
\newblock {\em Indiana Univ. Math. J., to appear}.

\bibitem{focant98}
S.~Focant and T.~Gallay.
\newblock Existence and stability of propagating fronts for an autocatalytic
  reaction-diffusion system.
\newblock {\em Phys. D}, 120(3-4):346--368, 1998.

\bibitem{gohberg2006invariant}
I.~Gohberg, P.~Lancaster, and L.~Rodman.
\newblock {\em Invariant subspaces of matrices with applications}.
\newblock SIAM, 2006.

\bibitem{HadelerRothe}
K.-P. Hadeler and F.~Rothe.
\newblock Traveling fronts in nonlinear diffusion equations.
\newblock {\em J. Math. Biol.}, 2(1):251--263, 1975.

\bibitem{Comparison1}
F.~Hamel, J.~Nolen, J.-M. Roquejoffre, and L.~Ryzhik.
\newblock A short proof of the logarithmic {B}ramson correction in
  {F}isher-{KPP} equations.
\newblock {\em Netw. Heterog. Media}, 8(1):275--289, 2013.

\bibitem{henderson21}
C.~Henderson.
\newblock Slow and fast minimal speed traveling waves of the fkpp equation with
  chemotaxis.
\newblock {\em arXiv preprint arXiv:2102.06065}, 2021.

\bibitem{holzeranomalous}
M.~Holzer.
\newblock Anomalous spreading in a system of coupled {F}isher–{KPP}
  equations.
\newblock {\em Phys. D}, 270:1--10, 2014.

\bibitem{holzer12}
M.~Holzer and A.~Scheel.
\newblock A slow pushed front in a {L}otka-{V}olterra competition model.
\newblock {\em Nonlinearity}, 25(7):2151--2179, 2012.

\bibitem{HolzerScheelPointwiseGrowth}
M.~Holzer and A.~Scheel.
\newblock Criteria for pointwise growth and their role in invasion processes.
\newblock {\em J. Nonlinear Sci.}, 24(1):661--709, 2014.

\bibitem{hosono98}
Y.~Hosono.
\newblock The minimal speed of traveling fronts for a diffusive lotka-volterra
  competition model.
\newblock {\em Bulletin of Mathematical Biology}, 60(3):435--448, 1998.

\bibitem{Kolmogorov}
A.~Kolmogorov, I.~Petrovskii, and N.~Piskunov.
\newblock Etude de l'equation de la diffusion avec croissance de la quantite de
  matiere et son application a un probleme biologique.
\newblock {\em Bjul. Moskowskogo Gos. Univ. Ser. Internat. Sec. A}, 1:1--26,
  1937.

\bibitem{Lau}
K.-S. Lau.
\newblock On the nonlinear diffusion equation of {K}olmogorov, {P}etrovsky, and
  {P}iscounov.
\newblock {\em J. Differential Equations}, 59(1):44--70, 1985.

\bibitem{lewis02}
M.~A. Lewis, B.~Li, and H.~F. Weinberger.
\newblock Spreading speed and linear determinacy for two-species competition
  models.
\newblock {\em J. Math. Biol.}, 45(3):219--233, 2002.

\bibitem{lloydscheel}
D.~J.~B. Lloyd and A.~Scheel.
\newblock Continuation and bifurcation of grain boundaries in the
  {S}wift-{H}ohenberg equation.
\newblock {\em SIAM J. Appl. Dyn. Syst.}, 16(1):252--293, 2017.

\bibitem{lpss}
G.~J. Lord, D.~Peterhof, B.~Sandstede, and A.~Scheel.
\newblock Numerical computation of solitary waves in infinite cylindrical
  domains.
\newblock {\em SIAM J. Numer. Anal.}, 37(5):1420--1454, 2000.

\bibitem{PoganScheel}
A.~Pogan and A.~Scheel.
\newblock Instability of spikes in the presence of conservation laws.
\newblock {\em Z. Angew. Math. Phys.}, 61(6):979--998, 2010.

\bibitem{AbsoluteSpecArndBjorn}
B.~Sandstede and A.~Scheel.
\newblock Absolute and convective instabilities of waves on unbounded and large
  bounded domains.
\newblock {\em Phys. D}, 145(3):233--277, 2000.

\bibitem{ssmod}
B.~Sandstede and A.~Scheel.
\newblock On the structure of spectra of modulated travelling waves.
\newblock {\em Math. Nachr.}, 232:39--93, 2001.

\bibitem{ssdefect}
B.~Sandstede and A.~Scheel.
\newblock Defects in oscillatory media: toward a classification.
\newblock {\em SIAM J. Appl. Dyn. Syst.}, 3(1):1--68, 2004.

\bibitem{ssmorse}
B.~Sandstede and A.~Scheel.
\newblock Relative {M}orse indices, {F}redholm indices, and group velocities.
\newblock {\em Discrete Contin. Dyn. Syst.}, 20(1):139--158, 2008.

\bibitem{SattingerWeightedNorms}
D.~Sattinger.
\newblock Weighted norms for the stability of traveling waves.
\newblock {\em J. Differential Equations}, 25(1):130--144, 1977.

\bibitem{stegemerten}
F.~Stegemerten, S.~V. Gurevich, and U.~Thiele.
\newblock Bifurcations of front motion in passive and active
  {A}llen-{C}ahn--type equations.
\newblock {\em Chaos}, 30(5):053136, 12, 2020.

\bibitem{vanSaarloosReview}
W.~van Saarloos.
\newblock Front propagation into unstable states.
\newblock {\em Phys. Rep.}, 386:29--222, 2003.

\end{thebibliography}

\end{document}